\newcommand{\mc}{\mathcal}
\newcommand{\mb}{\mathbb}
\newcommand{\E}{\mathbb{E}}
\newcommand{\R}{\mathbb{R}}
\newcommand{\ti}{\widetilde}
\newcommand{\I}{\textbf{1}_}
\newcommand{\eps}{\varepsilon}
\newcommand{\lo}{\langle}
\newcommand{\ro}{\rangle}
\newcommand{\U}{\mathcal{K}}
\newcommand{\mf}{\mathfrak}
\newcommand{\tr}{{\text{\tiny{\textsf{T}}}}}
\newcommand{\n}{{n}}
\theoremstyle{definition}
\newtheorem{defn}{Definition}[section]
\newtheorem{rmk}[defn]{Remark}
\newtheorem*{ak}{Acknowledgements}
\theoremstyle{plain}
\newtheorem{lem}[defn]{Lemma}
\newtheorem{thm}[defn]{Theorem}
\newtheorem{cor}[defn]{Corollary}
\newtheorem{ass}[defn]{Assumption}
\newtheorem{prop}[defn]{Proposition}
\numberwithin{equation}{section}
\DeclareMathOperator*\esssup{ess \,sup}
\DeclareMathOperator*\essinf{ess \,inf}
\DeclareMathOperator*\Limsup{Lim \,sup}
\DeclareMathOperator*\Liminf{Lim \,inf}
\DeclareMathOperator*\Lim{Lim}
\begin{document}
\begin{frontmatter}

\title{The Stability of the Constrained Utility Maximization Problem - A BSDE Approach}
\runtitle{Sensitivity under Cone Constraints}

\author{\fnms{Markus} \snm{Mocha\hspace{-0.15cm}}\ead[label=e2]{mocha@math.hu-berlin.de}}
\address{Markus Mocha\\ Institut f\"{u}r Mathematik\\
Humboldt-Universit\"{a}t zu Berlin \\ Unter den Linden 6, 10099 Berlin \\ Germany\\
\printead{e2}}
\and
\author{\fnms{Nicholas} \snm{Westray\thanksref{t4}}\ead[label=e4]{n.westray@imperial.ac.uk}}
\address{Nicholas Westray\\ Department of Mathematics\\ Imperial College\\  London SW7 2AZ \\ United Kingdom\\
\printead{e4}}\thankstext{t4}{Corresponding author: \texttt{n.westray@imperial.ac.uk}}


\affiliation{Humboldt-Universit\"{a}t zu Berlin and Imperial College London}

\runauthor{M. Mocha and N. Westray}

\begin{keyword}[class=AMS]
\kwd{60H30}
\kwd{93D99}
\kwd{91B28}
\end{keyword}
\begin{keyword}
\kwd{Utility Maximization}
\kwd{Duality Theory}
\kwd{Quadratic Semimartingale BSDEs}
\kwd{Stability}
\end{keyword}

\begin{abstract}
This article studies the sensitivity of the power utility maximization problem with respect to the investor's relative risk aversion, the statistical probability measure, the investment constraints and the market price of risk. We extend previous descriptions of the dual domain then exploit the link between the constrained utility maximization problem and continuous semimartingale quadratic BSDEs to reduce questions on sensitivity to 
results on stability for such equations. This then allows us to prove appropriate convergence of the primal and dual optimizers in the semimartingale topology.
\end{abstract}
\end{frontmatter}


\section{Introduction}
In this article we study the optimal investment problem for an agent over a horizon interval $[0,T]$. 
The strategies or portfolios available to the agent 
are those which are valued in a convex cone, representing constraints like no short selling, and the aim is 
to maximize the expected utility of terminal wealth $\E[U(X_T)]$, where the utility function $U$ is of power type
and models the agent's preferences. The question of existence and uniqueness
for such a problem is classical in mathematical finance and has been extensively studied; in particular it is known that conditions exist
guaranteeing a unique solution. The focus in the present paper is on stability, namely we want to address the following question.

\begin{quote}
\normalsize{``Do the components of the solution, such as the optimal terminal wealth and investment strategy, depend continuously on the 
	input parameters, e.g.  utility function, asset price dynamics, investment constraints?"}    
\end{quote}     

This research is motivated by both practical applications as well as theory. Consider a situation where the optimal
investment portfolio above is implemented, typically there will be small errors in the calibration of input parameters. 
In order that the usefulness of performing such an optimization is not diminished it is necessary to show that such errors do not largely 
affect the optimizers, at least locally, which ties in with the above question.

There is a huge volume of literature related to utility maximization going back as far as Merton \cite{Me69,Me71}, for an excellent overview of the case where there are no investment constraints we refer to the survey article of Schachermayer \cite{S04} as well as the references therein. The situation where there are cone constraints has been studied more recently, in particular existence and uniqueness are still guaranteed. We refer the interested reader to the articles of Cuoco \cite{Cu97} and Cvitani{\'c} and Karatzas \cite{CK93} for It\^{o}-price dynamics 
and Karatzas and {\v{Z}}itkovi{\'c} \cite{KZ03}, Mnif and Pham \cite{MP01} and Westray \cite{We09} for the case of semimartingale dynamics. 
The modern solution approach for both constrained and unconstrained problems is via the duality or martingale method, where the convexity of the problem as well as the link between (a generalization of) martingale measures and replicable wealths is exploited. 

It is by the study of this dual problem that the mathematical literature on stability has proceeded thus far, beginning
with the article of Jouini and Napp \cite{JN04} and developing subsequently into two themes.
The first analyses continuity with respect to the preferences. A
sequence of utility functions (not necessarily of power type) $U^n$ converging to $U$ is considered and the continuity of the 
corresponding optimizers investigated, for complete It\^{o}-price models in \cite{JN04} and for incomplete markets with 
continuous semimartingale dynamics in Larsen \cite{La09}. In the complete case of \cite{JN04}, due to greater structure on the problem,
the authors prove the $L^p$ convergence of the optimal terminal wealth and strategy, whereas in \cite{La09} this is weakened to convergence in probability of only the optimal terminal wealth. More recently Kardaras and {\v{Z}}itkovi{\'c} \cite{KZ07} show that such convergence in probability of the optimal terminal wealth also holds when there is a random endowment and the statistical probability measure simultaneously varies, modelled by a sequence of measures $\mb{P}^n$ converging in total variation norm. Finally we mention the work by Nutz \cite{Nu10} who looks at risk aversion asymptotics for the power utility function, but also provides results on the continuity with respect to the risk aversion parameter.

The second theme, beginning with Larsen and {\v{Z}}itkovi{\'c} \cite{LZ07}, relates to misspecifications in the model, i.e. the utility function is fixed (again, not necessarily of power type) and the asset price dynamics vary. Typically there is a continuous semimartingale $S^\lambda$, 
modelling the financial market, which is indexed by its market price of risk $\lambda$. A sequence $\lambda^n$ is then chosen, appropriately convergent to some $\lambda$, and the convergence of the optimal terminal wealths $\hat{X}_T^{\lambda^n}$ is studied, again in probability. Continuity is shown under a suitable uniform integrability assumption and the results therein have recently been generalized to the conditional value functions and optimal wealth random variables $\hat{X}^{\lambda^n}_{\tau}$ for a stopping time $\tau$ valued in $[0,T]$, we refer to Bayraktar and Kravitz \cite{BK10} for further details.

The previous articles consider stability/continuity only in the situation where there are no investment constraints. 
In the specific case when the utility function is the logarithm, this can be generalized as shown in a recent article by Kardaras \cite{K10}. The optimizing investment strategy is then called the num{\'e}raire portfolio and by using its known explicit formula it is shown to depend continuously on the filtration, probability measure as well as the investment constraints, modelled by a sequence of cones. 

For the case of power, logarithmic and exponential utility functions recent literature, see the articles by Hu, Imkeller and M\"{u}ller \cite{HIM05}, Morlais \cite{Mo09} and Nutz \cite{Nu109,Nu209}, has focussed on an alternative approach to solving the utility maximization problem. In this case the value function admits a factorization property and it is possible to reduce the study of the optimal wealth process and investment strategy to the study of the solution of a quadratic semimartingale backward stochastic differential equation (BSDE), even in the presence of constraints. It is this correspondence which is exploited in the present article, showing that questions of sensitivity for the optimal wealth process and investment strategy are directly related to stability results for semimartingale BSDEs recently established in Mocha and Westray \cite{MW10}. One of the main features of the present article is that we work under an  exponential moments condition, rather than a boundedness condition, on the mean variance tradeoff process.

The focus in the present article is specifically on power utility and thus our results are simultaneously more and less general than previous literature. We are fixed within a class of utility functions but allow for continuous semimartingale dynamics and constraints. Via the link with BSDEs we can simultaneously consider continuity with respect to utility function, model dynamics, statistical probability measure and cone constraints, integrating previous research into one framework. Here, the convergence used is that of the semimartingale topology, hence directly on the level of processes as opposed to convergence of the terminal wealth random variable in probability, as is typically shown.


The main contributions of this paper are divided into two parts. The first half provides a one to one relationship between the optimal wealth, strategy and dual variable and the solution to a quadratic semimartingale BSDE. This connection is proved in the presence of predictably measurable conic constraints on the investment strategy. The main tool used here is a result on the decomposition of elements of the dual domain, which extends those of \cite{KZ03} and \cite{LZ07} to allow for semimartingale dynamics and predictably measurable cone constraints. Such a result thus adds to the convex duality literature.   

The second half of this article applies this correspondence to study the continuity of the optimizers. The main contribution is to prove that the optimal wealth, strategy and dual variable all depend continuously on the input parameters of risk aversion, market price of risk, probability measure and constraints. We show that this convergence takes place in the semimartingale topology which extends the results in \cite{KZ07}, \cite{La09} and \cite{LZ07}. A feature of this result is that we rely on BSDE techniques rather than duality theory, which is new in the literature in this area. A final contribution is an example which shows that our conditions are necessary.  


The structure of the article is as follows, the modelling framework and main results are described in Sections \ref{chp_Uhalfline_model}
and \ref{sec_sensitivity}. Sections \ref{sec_dualdom} and \ref{sec_linkBSDEs} discuss the description of the dual domain and relationship between
the utility maximization problem and the solution to an appropriate BSDE. The connection with continuity is then shown in Section \ref{sec_stability}.
Related results whose proof would interrupt the flow of the text are collected
in the appendices.


\section{Model Formulation}
\label{chp_Uhalfline_model}
Throughout the present article we work on a
filtered probability space $(\Omega,\mc{F},(\mc{F}_t)_{0\leq t\leq
T},\mb{P})$ satisfying the usual conditions of right-continuity
and completeness. We assume that the time horizon $T$ is a finite number in $(0,\infty)$ and
that $\mc{F}_0$ is the completion of the trivial $\sigma$-algebra. All semimartingales are considered to be
equal to their c{\`a}dl{\`a}g modification. In order to apply the techniques of BSDE theory we will need the following assumption, 
often referred to in the literature as \emph{continuity of the filtration}. 
\begin{ass}\label{ass_contfilt}
All local martingales are continuous.
\end{ass}

There is a market consisting of one bond, assumed constant, and $d$ stocks with
discounted price process $S=(S^1,\ldots,S^d)^\tr\!$, a $d$-dimensional continuous
semimartingale on the given stochastic basis where we write ${}^\tr$ for transposition. More precisely, our
semimartingale $S$ is assumed to have dynamics
\begin{equation*}
dS_t=\text{Diag}(S_t)\,\big(dM_t+ \,d\lo M,M \ro_t\lambda_t\big),
\end{equation*}
where $M=(M^1,\ldots,M^d)^\tr$ is a $d$-dimensional continuous local martingale with $M_0=0$, $\lambda$ is a $d$-dimensional predictable process, the \emph{market price of risk}, satisfying
\begin{equation*}
\mb{P}\left(\int_0^T \lambda_t^\tr \,d\lo M,M \ro_t\lambda_t <+\infty\right)=1
\end{equation*}
and $\text{Diag}(S)$ denotes the $d\times d$ diagonal matrix having elements taken from $S$. Observe that $\cdot$ denotes stochastic integration and we write $\lo M,M\ro$ for the quadratic \mbox{(co-)}variation matrix of $M$. Note that it is a consequence of Delbaen and Schachermayer \cite{DS95} Theorem 3.5 that any continuous, arbitrage free, num\'{e}raire denominated model of a market is of the above form so there is no loss of generality in the above framework.

To precisely describe our model we need some further results on $\lo M,M\ro$. We may use Jacod and Shiryaev \cite{JS03} Proposition II.2.9 and II.2.29 to write
\begin{equation}
\label{eq_decompQVM}
\lo M,M\ro=C\cdot A,
\end{equation}
where $C$ is a predictable process valued in the space of symmetric positive semidefinite $d\times d$ matrices and $A$ is a predictable increasing process. It is known that there are many such factorizations, cf. \cite{JS03} Section III.4a. We can choose $A:=\arctan\!\left(\sum_{i=1}^d\lo M^i,M^i\ro\right)$ and then, following an application of the Kunita-Watanabe inequality, we may derive the absolute continuity of each $\lo M^i,M^j\ro$ with respect to $A$ to get $C$. From Karatzas and Shreve \cite{KS91} Theorem  3.4.2 it is known that there exist Borel measurable functions which diagonalize a symmetric positive semidefinite $d\times d$ matrix, in particular we
deduce the existence of some processes $P$ and $\Gamma$ valued in the space of $d\times d$
orthogonal (resp. diagonal) matrices such that
\begin{equation}
\label{eq_decompQVM2}
\lo M,M \ro =C\cdot A=P^\tr \Gamma P\cdot A=B^\tr B\cdot A,
\end{equation}
where we set $B:=\Gamma^{\frac{1}{2}}P$. The matrix $\Gamma$ has nonnegative entries only, with the eigenvalues of $C$ on its diagonal. We also point out that our results do not depend on the particular choice of $A$, but only on its boundedness. The above processes $A,B,C,P$ and $\Gamma$ will be fixed throughout.

We use $\mc{P}$ to denote the predictable $\sigma$-algebra on $[0,T]\times \Omega$, generated by all the left-continuous adapted processes. It is known that the process $A$ induces a measure $\mu^A$ on $\mc{P}$, the \emph{Dol{\'e}ans measure}, defined for $E\in\mc{P}$ by
\begin{equation}
\mu^A(E):=\E\!\left[\int_0^T\I{E}(t)\,dA_t\right].
\end{equation}

We use the abbreviation $\Upsilon$ for a process $(\Upsilon_t)_{0\leq t\leq T}$ and write ``for
all $t$" implicitly meaning ``for all $t\in[0,T]$". A local martingale $N$ is called \emph{orthogonal} to $M$ if $\lo M^i,N^c\ro\equiv0$ for all $i=1,\ldots,d$ where $N^c$ denotes the \emph{continuous part} of $N$.
We refer the reader to \cite{JS03} and Protter \cite{Pr04} for any unexplained terminology and background material.

For the present article we require the following assumption.
\begin{ass}
\label{ass_expmom}
For all $c>0$ we have that 
$\E\big[\exp\!\left(c\,\lo \lambda\cdot M,\lambda\cdot M\ro_T\right)\big]
<+\infty.$
\end{ass}
We describe this by saying that the mean-variance tradeoff $\lo \lambda\cdot M,\lambda\cdot M\ro_T$ has \emph{exponential moments of all orders}.

\begin{rmk}
Assumption \ref{ass_expmom} allows us to provide a unified presentation of the duality and the BSDE approach to solving the utility maximization problem; it ensures the existence of an equivalent local martingale measure for $S$ as well as implying finiteness of the primal and dual problems. Our analysis involves semimartingale BSDEs for which the above condition allows us to apply the existence, uniqueness and stability results from Appendix \ref{appendBSDE}. 
\end{rmk}

Trading in the above market is subject to constraints which we now describe. Recall that an $\mb{R}^d$-valued \emph{multivalued mapping} $G$ is a function $G:[0,T]\times\Omega\to2^{\mb{R}^d}$ (the power set of $\mb{R}^d$). It is called \emph{predictably measurable} if, for all closed subsets $Q$ of $\mb{R}^d$, 
\begin{equation*}
G^{-1}(Q):=\left\{(t,\omega)\in[0,T]\times\Omega~|~ G(t,\omega)\cap Q\neq\emptyset\right\}\in\mc{P}.
\end{equation*} 
The function $G$ is called closed (convex) if $G(t,\omega)$ is a closed (convex) set for
all $(t,\omega)\in [0,T]\times\Omega$. The constraints are modelled by
the predictably multivalued mapping $\mc{K}$ and we assume it satisfies the following assumption. 

\begin{ass}
\label{asscones}
The mapping $(t,\omega)\mapsto\U(t,\omega)\subset\R^d$ is closed, convex, and polyhedral in the following sense. There is an integer $m\geq1$, independent of $(t,\omega)$, together with corresponding predictable $M$-integrable processes $K^{1},\ldots,K^{m}$ such that $\mb{P}$-a.s. for all $t\in[0,T]$
\begin{equation*}
\U(t,\omega)=\left\{\sum_{j=1}^{m} c_{j}\, K^{j}_t(\omega)\,\Bigg|\,c_{j}\geq0,j=1,\ldots,m\right\}.
\end{equation*}
\end{ass}

Further discussion and explanation on the importance of the above assumption on $\U$ from the point of view of existence of optimal strategies can be found in Czichowsky and Schweizer \cite{CS09} as well as Czichowsky, Westray and Zheng \cite{CWZ09}. The unconstrained case is covered by setting $\U\equiv\mb{R}^d$. Other special cases include a constant polyhedral cone in $\mb{R}^d$ as in \cite{KZ03}, as well as $\U\equiv\mb{R}^{d_1}\times\{0\}^{d_2}$ with $d=d_1+d_2$ in which we face a model where the processes $S^{d_1+1},\ldots,S^{d}$ are \emph{nontradable}. 

We are now ready to introduce the notion of a trading strategy.
\begin{defn}
A predictable $d$-dimensional process $\nu$ is called \emph{admissible} and we write $\nu\in\mc{A}_\U$, if
\begin{enumerate}
\item It is $M$-integrable, i.e.
\begin{equation*}
\mb{P}\left(\int_0^T\nu^\tr_t \,d\lo M,M \ro_t\nu_t <+\infty\right)=1.
\end{equation*}
\item We have that $\nu\in\U$, $\mu^A$-a.e.
\end{enumerate}
\end{defn}

In our framework, an admissible process $\nu$ will be interpreted as an investment strategy and its components $\nu^i$ represent the \emph{proportion of wealth} invested in each stock $S^i$, $i=1,\ldots,d$, subject to \emph{investment constraints} that are determined by $\U$. In particular, for some initial capital $x>0$ and an admissible strategy $\nu$, the associated  \emph{wealth process} $X^{x,\nu}$ evolves as follows
\begin{equation}
X^{x,\nu}:=x\,\mc{E}(\nu\cdot M +\nu\cdot \lo M,M\ro\lambda),\label{wealth}
\end{equation}
where $\mc{E}$ denotes the stochastic exponential. The family of all wealth processes arising from admissible strategies will be denoted by $\mc{X}(x)$, where we suppress the dependence on $\U$. Furthermore, we will omit writing explicitly the dependence of $X^{x,\nu}$ on the initial capital, when no ambiguity arises we just write $X^\nu$.

\begin{rmk}\label{addmultformat}
The wealth equation is often written in additive format, $X=x+H\cdot S$ for a predictable $S$-integrable process $H$ specifying the amount of the asset held in the portfolio and chosen such that it is valued in some constraint set and the resulting wealth process remains (only) \emph{nonnegative}. We write $\mc{X}^{add}(x)$ for such wealth processes and observe that $\mc{X}(x)\subset\mc{X}^{add}(x)$. In the case that $X_T>0$, which implies $X>0$ since $X$ is a supermartingale under some equivalent measure (assumed to exist), the correspondence between $H$ and $\nu$ is given by $H^iS^i=\nu^i X$ for $i=1,\ldots,d$. The cone constraint in the additive formulation consists of the requirement that $H\in \mathcal{L}$ where 
\begin{equation*}
\mathcal{L}(t,\omega):=\left\{\sum_{j=1}^{m} c_{j}\, L^{j}_t(\omega)\,\Bigg|\,c_{j}\geq0\right\} 
\end{equation*}
with $\mathbb{R}^d$-valued predictable $S$-integrable processes $L^1,\ldots,L^m$ such that the $i$th component of each $K^j$ equals $S^i$ times the $i$th component of $L^j$. In particular the framework of \cite{KZ03}, where $\mc{L}$ is \emph{constant}, can be embedded into ours since we allow for a \emph{predictably measurable} multivalued mapping $\U$.
\end{rmk}
\begin{rmk}
Our motivation for writing wealth in exponential format stems from the fact that the dual domain of the portfolio choice problem will be a family of supermartingale measures, hence stochastic exponentials. It then turns out that to describe the primal and dual optimizers via a BSDE it is most convenient to write wealth also as a stochastic exponential. An additional byproduct of this parameterization is that it simplifies the proof of the decomposition of the elements of the dual domain.

Since in our setting the optimal wealth $\hat{X}$ exists and satisfies $\hat{X}_T>0$ we may, without loss of generality, choose to optimize over the family of (strictly) positive wealth processes $\mc{X}(x)$. This, together with the fact that $\mc{K}$ and $\mc{L}$ are predictably measurable multivalued mappings, allows one to switch freely between the two formulations of the wealth process. 
\end{rmk}

Our agent has preferences modelled by a utility function $U$, which is here assumed to be of power type, $U(x)=\tfrac{\,x^p}{p},$
for $p\in(-\infty,0)\cup(0,1)$. They start with initial capital $x>0$, may choose admissible strategies $\nu$,
and aim to maximize the expected utility of terminal wealth. This leads to the following formulation
of the primal optimization problem, 
\begin{equation}
\label{primalproblem} u(x):=\sup_{\nu\in\mc{A}_\U}\,\E\!\left[U\Big(X^{x,\nu}_T\Big)\right].
\end{equation}
\begin{rmk}
A key property arising under power utility and to be used throughout is the factorization property of the value function, more precisely we can write
\begin{align*}
u(x)&=x^p\,\sup_{\nu\in\mc{A}_\U}\E\!\left[U\Big(X^{1,\nu}_T\Big)\right] =U(x)\,c_p,
\end{align*}
for some constant $c_p$, $p\in(-\infty,0)\cup(0,1)$, to be identified below. A well known corollary of this is that the optimal investment strategy $\hat{\nu}$, when it exists, is independent of $x$ and the primal optimizer $\hat{X}$ has a
simple linear dependence on $x$.
\end{rmk}
Related to the above primal optimization problem is a dual problem which we now describe. For $y>0$ we introduce the set
\begin{equation*}
\mc{Y}(y):=\left\{Y\geq 0\,|\,Y_0=y \text{ and } XY \text{
is a supermartingale for all }X\in\mc{X}(1)\right\}
\end{equation*}
and consider the minimization problem
\begin{equation}
\label{dualproblem}
\ti{u}(y):=\inf_{Y\in\mc{Y}(y)}\E\!\left[\ti{U}\big(Y_T\big)\right],
\end{equation}
where $\ti{U}$ is the \emph{conjugate} (or \emph{dual}) of $U$ given for $y>0$ by
\begin{equation*}
\ti{U}(y):=\sup_{x>0}\,\{U(x)-xy\}=-\tfrac{\,y^q}{q},
\end{equation*}
with $q:=\frac{p}{p-1}$ the dual exponent to $p$. Note that this set has the following factorization property, $\mc{Y}(y)=y\mc{Y}(1)$. Similarly to $u$ we then see the factorization property for $\ti{u}$,
\begin{align*}
\ti{u}(y)=\inf_{Y\in\mc{Y}(1)}\E\!\left[\ti{U}\big(yY_T\big)\right]
=y^q\,\inf_{Y\in\mc{Y}(1)}\E\!\left[\ti{U}\big(Y_T\big)\right]
=\ti{U}(y)\,\ti{c}_p.
\end{align*}
The relationship between $\ti{c}_p$ and $c_p$ is provided in Theorem \ref{thm_WZ09}. 

The utility maximization problem with general semimartingale dynamics and utility functions (not specifically power) has been studied under constant constraints, see \cite{KZ03} for the case with intertemporal consumption as well as \cite{We09}. The next proposition shows that the assumptions necessary to apply these results hold in our setting.
\begin{prop}
\label{prop_EMM}
Let Assumption \ref{ass_expmom} hold then there exists an equivalent local martingale measure for $S$ and $\max(u(x),\ti{u}(y))<+\infty$ for all $x,y>0$.
\end{prop}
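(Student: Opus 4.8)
The plan is to produce an explicit equivalent local martingale measure, namely the minimal martingale measure associated with $\mathcal{E}(-\lambda\cdot M)$, and then to reuse that very process as an admissible dual variable; finiteness of the primal value function will then drop out of the Fenchel inequality.

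\emph{Step 1: the ELMM.} I would set $Z:=\mathcal{E}(-\lambda\cdot M)$, a strictly positive continuous local martingale, and verify Novikov's criterion: Assumption \ref{ass_expmom} applied with $c=\tfrac12$ gives $\E[\exp(\tfrac12\langle\lambda\cdot M,\lambda\cdot M\rangle_T)]<\infty$, so $Z$ is a uniformly integrable martingale and $d\mb{Q}:=Z_T\,d\mb{P}$ defines a probability measure equivalent to $\mb{P}$. Girsanov's theorem makes $M+\langle M,M\rangle\lambda$ a $\mb{Q}$-local martingale, and since $S=S_0+\text{Diag}(S)\cdot(M+\langle M,M\rangle\lambda)$ with $\text{Diag}(S)$ locally bounded, $S$ is a $\mb{Q}$-local martingale; hence $\mb{Q}$ is an ELMM.

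\emph{Step 2: finiteness.} First I would check that $yZ\in\mc{Y}(y)$ for every $y>0$: any $X\in\mc{X}(1)$ equals $\mathcal{E}\big(\nu\cdot(M+\langle M,M\rangle\lambda)\big)$, a nonnegative $\mb{Q}$-local martingale and therefore a $\mb{Q}$-supermartingale, so by the Bayes rule $XZ$ is a $\mb{P}$-supermartingale. When $p<0$ both bounds are then immediate, since $U<0$ on $(0,\infty)$ forces $u(x)\le0$, while $q\in(0,1)$ and Jensen's inequality give $\E[Z_T^q]\le(\E[Z_T])^q=1$, whence $\ti u(y)\le\E[\ti U(yZ_T)]=-\tfrac{y^q}{q}\E[Z_T^q]<\infty$. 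When $p\in(0,1)$, so that $q<0$, I would use the identity
\begin{equation*}
Z_T^q=\mathcal{E}(-q\lambda\cdot M)_T\,\exp\!\Big(\tfrac{q^2-q}{2}\langle\lambda\cdot M,\lambda\cdot M\rangle_T\Big),
\end{equation*}
whose final exponent is strictly positive, and estimate the right-hand side by Hölder's inequality, controlling the stochastic-exponential factor by Novikov's criterion (which again uses Assumption \ref{ass_expmom}) and the remaining exponential of the mean--variance tradeoff directly by Assumption \ref{ass_expmom}; this gives $\E[Z_T^q]<\infty$ and hence $\ti u(y)\le\E[\ti U(yZ_T)]<\infty$. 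Finally, for any $X\in\mc{X}(x)$ and $Y\in\mc{Y}(y)$ the Fenchel inequality yields $U(X_T)\le\ti U(Y_T)+X_TY_T$; since $XY$ is a nonnegative supermartingale with $X_0Y_0=xy$, taking expectations and then optimising gives $u(x)\le\ti u(y)+xy<\infty$, with no integrability subtlety because here $U,\ti U\ge0$.

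\emph{Main obstacle.} The only step that is not pure bookkeeping is the moment estimate $\E[Z_T^q]<\infty$ in the case $p\in(0,1)$: the negative dual exponent makes $Z_T^q$ a genuinely heavy-tailed quantity, and this is precisely the point where the exponential moments Assumption \ref{ass_expmom}, as opposed to mere boundedness of $\langle\lambda\cdot M,\lambda\cdot M\rangle_T$, is indispensable. Everything else follows from Girsanov's theorem, the supermartingale property built into $\mc{Y}(y)$, and the Fenchel inequality.
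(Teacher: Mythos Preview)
Your proof is correct and follows essentially the same route as the paper: construct the minimal martingale measure $\mc{E}(-\lambda\cdot M)$ via Novikov, use it as a test element of $\mc{Y}(y)$, reduce to the case $p\in(0,1)$, and bound $\E[Z_T^q]$ by combining a H{\"o}lder/Cauchy--Schwarz split with the exponential moments assumption, then finish with the Fenchel inequality. The paper's only cosmetic difference is that it writes the identity in the form $Z_T^q=\mc{E}(-2q\lambda\cdot M)_T^{1/2}\exp\!\big(q(2q-1)\lo\lambda\cdot M,\lambda\cdot M\ro_T\big)^{1/2}$, so that a single application of Cauchy--Schwarz leaves the stochastic exponential at first power (expectation $\le1$) and avoids the extra iteration implicit in your ``controlling the stochastic-exponential factor by Novikov's criterion''.
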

\begin{proof}
The process $Y^\lambda:=\mc{E}(-\lambda\cdot M)$ is the density process of the so called \emph{minimal martingale measure}
thanks to our exponential moments condition and Novikov's criterion. 
For the second part we need only consider the case 
$p\in(0,1)$ and then from the definition of
$\ti{U}$ we have 
\begin{align*}
\max(u(x),\ti{u}(y))&\leq
\E\!\left[\ti{U}\big(yY_T^{\lambda}\big)\right]+
\sup_{\substack{\nu\in\,\mc{A}_\U}}\E\!\left[X_T^{x,\nu}y
Y_T^{\lambda}\right]
\leq -\tfrac{y^q}{q}\,\E\!\left[\big(Y_T^{\lambda}\big)^q\right]+
xy.
\end{align*}
The proof is completed by observing $q<0$ and using the H{\"o}lder inequality to derive
\begin{equation*}
\E\!\left[\big(Y_T^{\lambda}\big)^q\right]
=\E\!\left[\mc{E}(-2q\lambda \cdot
M)_T^{1/2}\exp\!\Big(q(2q-1)\lo\lambda\cdot M,\lambda\cdot M\ro_T\Big)^{1/2}\right]
<+\infty.\qedhere
\end{equation*}
\end{proof}
The following theorem states the existence and uniqueness results that are pertinent for our study.
\begin{thm}
\label{thm_WZ09}
Suppose Assumptions \ref{ass_expmom} and \ref{asscones} hold and let $x,y>0$. Then:
\begin{enumerate}
\item There exists an admissible strategy $\hat{\nu}\in\mc{A}_\U$ which is optimal for the primal problem,
\begin{equation*}
u(x)=\E\big[U\big(\hat{X}_T\big)\big], \text{ where } \hat{X}=X^{x,\hat{\nu}}\!.
\end{equation*}
In addition, $\hat{\nu}$ is unique in the sense that for any
other optimal strategy $\bar{\nu}\in\,\mc{A}_\U$ 
the wealth processes $X^{x,\hat{\nu}}$ and $X^{x,\bar{\nu}}$ are indistinguishable. 
\item There exists an optimal $\hat{Y}^y\in\mc{Y}(y)$ for the dual problem, unique up to indistinguishability,
\begin{equation*}
\ti{u}(y)=\E\!\left[\ti{U}\big(\hat{Y}_T\big)\right]\!, \text{ where } \hat{Y}=\hat{Y}^{y}. \end{equation*}
\item The functions $u$ and $\ti{u}$ are continuously differentiable and conjugate. If $y=u'(x)$ then, adopting the notation from (i) and (ii), we have the relation
$\hat{Y}_T = U'(\hat{X}_T)$
and $\hat{X}\hat{Y}$ is a martingale on $[0,T]$. More explicitly, there are constants $c_p$, $p\in(-\infty,0)\cup(0,1)$, such that with $\ti{c}_p:=(c_p)^{\frac{1}{1-p}},$
\begin{equation*}
u(x)= U(x)\,c_p,\quad
\ti{u}(y)=\ti{U}(y)\,\ti{c}_p.
\end{equation*}
\end{enumerate}
\end{thm}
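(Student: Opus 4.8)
The plan is to obtain (i)--(iii) from the general convex-duality theory for constrained utility maximization in continuous semimartingale markets developed in \cite{KZ03} and \cite{We09}, after checking that Assumptions \ref{ass_contfilt}, \ref{ass_expmom} and \ref{asscones} put us inside the scope of that theory; the explicit formulae for $c_p$ and $\ti{c}_p$ in (iii) then drop out of the power scaling.

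First I would verify the abstract hypotheses. Proposition \ref{prop_EMM} already provides the two essential inputs: an equivalent local martingale measure for $S$, with density process the minimal martingale measure $\mc{E}(-\lambda\cdot M)$ (a true martingale by Assumption \ref{ass_expmom} and Novikov's criterion), and the finiteness $\max(u(x),\ti{u}(y))<+\infty$ for all $x,y>0$. The utility $U(x)=x^p/p$ is strictly concave, strictly increasing and smooth on $(0,\infty)$ and, being a power function, has reasonable asymptotic elasticity. Assumption \ref{asscones} equips $\U$ with exactly the predictably measurable polyhedral cone structure for which closedness of the set of attainable terminal wealths, and hence existence of a primal maximizer, is known (cf. \cite{CS09}, \cite{CWZ09}). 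Feeding these facts into \cite{KZ03}, \cite{We09} yields: an optimal $\hat\nu\in\mc{A}_\U$ with wealth $\hat X=X^{x,\hat\nu}$, unique up to indistinguishability of the wealth process by strict concavity of $U$; an optimal $\hat Y=\hat Y^y\in\mc{Y}(y)$, unique up to indistinguishability; continuous differentiability and conjugacy of $u$ and $\ti{u}$; and, when $y=u'(x)$, the first-order condition $\hat Y_T=U'(\hat X_T)$ together with the martingale property of $\hat X\hat Y$ on $[0,T]$. This gives (i), (ii) and the non-explicit part of (iii).

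It remains to identify $u$ and $\ti{u}$. By the factorization property recorded in the remark following \eqref{primalproblem}, which uses $\mc{X}(x)=x\,\mc{X}(1)$ and the homogeneity $U(\kappa x)=\kappa^p U(x)$, there is a constant $c_p$ with $u(x)=U(x)\,c_p$, and $0<c_p<+\infty$ by Proposition \ref{prop_EMM}. The same argument, now with $\mc{Y}(y)=y\,\mc{Y}(1)$ and $\ti{U}(\kappa y)=\kappa^q\ti{U}(y)$, produces $\ti{u}(y)=\ti{U}(y)\,\ti{c}_p$ for a constant $\ti{c}_p$. Substituting $u(x)=c_p\,x^p/p$ into the conjugacy $\ti{u}(y)=\sup_{x>0}\{u(x)-xy\}$ from (iii), the supremum is attained at $x=(y/c_p)^{1/(p-1)}$, and a short computation with $q=p/(p-1)$ and $1-q=1/(1-p)$ gives $\ti{u}(y)=\ti{U}(y)\,(c_p)^{1/(1-p)}$, hence $\ti{c}_p=(c_p)^{1/(1-p)}$.

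The main obstacle is not the scaling bookkeeping but checking that our \emph{exponential moments} Assumption \ref{ass_expmom}, in place of a boundedness assumption on the mean-variance tradeoff, still delivers the integrability, closedness and weak-compactness properties that underpin the abstract existence statements under the predictably measurable polyhedral cone $\U$. In particular one must confirm that $\mc{Y}(y)$ is non-empty, which is done by combining $\mc{E}(-\lambda\cdot M)$ with the support process of $\U$ to form a supermartingale density, and that the optimal terminal wealth is strictly positive so that the exponential and additive formulations of Remark \ref{addmultformat} agree. With these points in hand the remaining arguments are routine.
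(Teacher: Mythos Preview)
Your overall approach matches the paper's: both reduce to the abstract duality of \cite{KZ03} and \cite{We09}, use the polyhedral structure of $\U$ together with \cite{CS09} to get closedness of the set of wealth processes in the semimartingale topology, and then read off the scaling identities from power homogeneity. The paper likewise works in the additive parametrization of Remark \ref{addmultformat} and shows $\hat{X}_T>0$ to pass back to the multiplicative one.

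There is, however, one genuine gap in your sketch. You write that feeding the inputs into \cite{We09} yields ``an optimal $\hat Y=\hat Y^y\in\mc{Y}(y)$''. This is too fast: in \cite{We09} the dual problem is posed over a subset of $L^\infty(\mb{P})^*$, i.e.\ over \emph{finitely additive} measures, and the abstract optimizer $\hat{\zeta}$ is a priori only finitely additive with a countably additive part $\hat{\zeta}_c$. The paper does real work here (Appendix \ref{existappend}): starting from the primal first-order condition it sets $\ti{\eta}:=U'(\hat{X}_T)$, uses a calculus-of-variations argument as in \cite{BP04} to show $\ti{\eta}/y$ lies in the polar $\mc{C}^\circ$ of the set of attainable claims, and then applies a bipolar argument via the supermartingale measures $\mc{M}^{\text{sup}}$ and the identity $\mc{D}^{\circ\circ}=\mc{D}$ to place $\ti{\eta}$ below the terminal value of some $\hat{Y}\in\mc{Y}^{add}(y)\subset\mc{Y}(y)$. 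Only then do the martingale property of $\hat{X}\hat{Y}$ and the strict positivity $\hat{Y}_T>0$ follow. Your final paragraph gestures at checking that ``$\mc{Y}(y)$ is non-empty'' and at strict positivity of $\hat{X}_T$, but the more delicate issue is this identification of the abstract dual optimizer with an element of $\mc{Y}(y)$; you should spell out that step.

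Two smaller points. First, the theorem as stated does not require Assumption \ref{ass_contfilt}; only \ref{ass_expmom} and \ref{asscones} are used. Second, \cite{KZ03} and \cite{We09} treat \emph{constant} cones, so you should note explicitly (as the paper does) that the only place this enters is through closedness of $\mc{X}^{add}(1)$, which \cite{CS09} supplies for predictably measurable polyhedral $\U$. The pathwise uniqueness of both $\hat{X}$ and $\hat{Y}$ (not just of the terminal values) also requires a short switching argument that the paper carries out; this is routine but worth mentioning.
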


In the present article our aim is to analyze the above problems and their stability by relating them directly to the solution of a continuous semimartingale BSDE of the following type:
\begin{gather}
\label{BSDE}
d\Psi_t=Z_t^\tr\,dM_t+dN_t-F(t,Z_t)\,dA_t-\frac{1}{2}\,d\lo N,N\ro_t, \quad \Psi_T=0,
\end{gather}
where $F$ is a predictable function $[0,T]\times\Omega\times\mb{R}^d\to\mb{R}$ called the \emph{generator} or \emph{driver}. 
\begin{defn}\label{DefnBSDESol}
A \emph{solution} to the BSDE \eqref{BSDE} is a
triple $(\Psi,Z,N)$ of processes valued in $\R\times\R^d\times\R$
satisfying \eqref{BSDE} $\mb{P}$-a.s. such that:
\begin{enumerate}
\item The function $t\mapsto \Psi_t$ is continuous $\mb{P}$-a.s.
\item The process $Z$ is predictable and satisfies $\int_0^TZ_s^\tr\,d\lo M,M\ro_sZ_s<+\infty$, $\mb{P}$-a.s.
\item The local martingale $N$ is continuous and orthogonal to $M$.
\item We have $\mb{P}$-a.s. that $\int_0^T|F(t,Z_t)|\,dA_t+\lo N,N\ro_T<+\infty.$ 
\end{enumerate}
We call $Z\cdot M+N$ the \emph{martingale part} of a solution.
\end{defn}
We shall be especially interested in solution triples $(\Psi,Z,N)$ with 
$\Psi\in\mathfrak{E}$, where $\mf{E}$ denotes the space of processes $\Upsilon$
such that
\begin{equation*}
\E\!\left[\exp\!\left(c\Upsilon^*\right)\right]<+\infty \text{
for all } c>0,
\end{equation*}
i.e. those processes whose supremum, $\Upsilon^*:=\sup_{0\leq t \leq T} |\Upsilon_t|$, possesses exponential moments of all orders. Indeed, we are going to show that $\hat{\Psi}:=\log\bigl(\hat{Y}/U'(\hat{X})\bigr)$ is the unique solution to a specific quadratic BSDE with $\hat{\Psi}\in\mathfrak{E}$.

We conclude this section with the some notation and useful results.
For $\rho\geq1$ we write $\mc{M}^\rho$ for the space of $\mb{P}$-local martingales, where $\ti{M}\in\mc{M}^\rho$ if it is a local martingale satisfying $\ti{M}_0=0$ and
\begin{equation*}
\E\!\left[\lo \ti{M},\ti{M}\ro_T^{\rho/2}\right]<+\infty.
\end{equation*}
More generally for an arbitrary continuous semimartingale $\Upsilon$ we shall indirectly use the $\mc{H}^\rho$ norm. Given the 
canonical decomposition $\Upsilon=\Upsilon_0+M^\Upsilon+A^\Upsilon$ where $M^\Upsilon$ is a (continuous) local martingale and $A^\Upsilon$ a (continuous) process
of finite variation, it is defined via
\begin{equation*}
\|\Upsilon\|_{\mc{H}^\rho}:= | \Upsilon_0| + \Big\|\lo M^\Upsilon,M^\Upsilon\ro_T^{1/2}\Big\|_{L^\rho(\mb{P})} +
 \left\|\int_0^T\big|dA_s^{\Upsilon}\big|\right\|_{L^\rho(\mb{P})}\!.
\end{equation*}

The stability result that we are going to derive involves the notion of convergence in the semimartingale topology for which we refer the reader to {\'E}mery \cite{Em79} and M{\'e}min \cite{Me80} for more details. The following proposition collects together the key results needed in the present article.
\begin{prop}[{\'E}mery \cite{Em79} Lemma 6, Nutz \cite{Nu10} Appendix A]\label{semimarttop}
Let $(\Upsilon^n)_{n\in\mb{N}_0}$ be a family of continuous semimartingales and $\rho\geq1$, then 
\begin{enumerate}
\item The sequence $(\Upsilon^n)_{n\in\mb{N}}$ converges to $\Upsilon^0$ in the semimartingale topology if and only if every subsequence of $(\Upsilon^n)_{n\in\mb{N}}$ has a subsequence 
converging locally to $\Upsilon$ in $\mc{H}^\rho$.
\item If $(\Upsilon^n)_{n\in\mb{N}}$ converges to $\Upsilon^0$ in the semimartingale topology then $(\mc{E}\big(\Upsilon^n)\big)_{n\in\mb{N}}$ converges to $\mc{E}\big(\Upsilon^0\big)$ in the semimartingale topology.
\item Convergence in the semimartingale topology implies convergence uniformly on compacts in probability, ucp in short, see \cite{Pr04} Section II.4.
\end{enumerate}
\end{prop}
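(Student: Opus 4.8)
All three statements are established in {\'E}mery \cite{Em79} and, in the form used here, in \cite{Nu10} Appendix A; I would organize the argument around two standard properties of the semimartingale topology — that it is metrizable, so that a sequence converges to a point if and only if every subsequence has a further subsequence converging to that point, and that (\cite{Em79} Lemma 6) convergence in it is equivalent, along subsequences, to local convergence in $\mc{H}^1$. Statement (iii) needs neither: the {\'E}mery topology is by construction finer than ucp convergence, which is precisely the assertion (cf.\ \cite{Pr04} Section II.4), since testing against the predictable process equal to $1$ on an initial stochastic interval and $0$ afterwards already controls $(\Upsilon^n-\Upsilon^0)^*$.

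For part (i) with $\rho=1$ one invokes \cite{Em79} Lemma 6 together with metrizability: local $\mc{H}^1$ convergence implies convergence in the semimartingale topology, conversely every subsequence of a semimartingale-convergent sequence — being itself convergent — has by that lemma a further subsequence converging locally in $\mc{H}^1$, and metrizability turns this subsequence criterion back into genuine convergence. To reach an arbitrary $\rho\geq1$ I would exploit that on the probability space $(\Omega,\mc{F},\mb{P})$ one has $\|\cdot\|_{\mc{H}^1}\leq\|\cdot\|_{\mc{H}^\rho}$, so that local $\mc{H}^\rho$ convergence is a priori stronger, and for the reverse implication re-localize a locally $\mc{H}^1$-convergent subsequence at the times where the quadratic variations $\lo M^{\Upsilon^n},M^{\Upsilon^n}\ro$, $\lo M^{\Upsilon^0},M^{\Upsilon^0}\ro$ and the total variations of $A^{\Upsilon^n}$, $A^{\Upsilon^0}$ remain below a fixed level; on the resulting stochastic intervals the random variables appearing in the $\mc{H}^\rho$ norm are bounded, so $L^1$ convergence automatically upgrades to $L^\rho$ convergence, and a diagonal argument then assembles a single localizing sequence. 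This last point is the computation carried out in \cite{Nu10} Appendix A.

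For part (ii) I would combine (i) with the fact that, $\Upsilon$ being continuous, $\mc{E}(\Upsilon)=\exp\bigl(\Upsilon-\Upsilon_0-\tfrac{1}{2}\lo M^\Upsilon,M^\Upsilon\ro\bigr)$. By metrizability it suffices to show that every subsequence of $\bigl(\mc{E}(\Upsilon^n)\bigr)$ has a further subsequence converging to $\mc{E}(\Upsilon^0)$ in the semimartingale topology. So I would fix a subsequence, use (i) to pass to $(n_k)$ along which $\Upsilon^{n_k}\to\Upsilon^0$ locally in $\mc{H}^r$ for every $r\geq1$, and read off from the definition of the $\mc{H}^r$ norm that then $M^{\Upsilon^{n_k}}\to M^{\Upsilon^0}$ locally in $\mc{M}^r$; the Kunita--Watanabe inequality applied to $\lo M^{\Upsilon^{n_k}},M^{\Upsilon^{n_k}}\ro-\lo M^{\Upsilon^0},M^{\Upsilon^0}\ro=\lo M^{\Upsilon^{n_k}}-M^{\Upsilon^0},M^{\Upsilon^{n_k}}+M^{\Upsilon^0}\ro$, combined with the Cauchy--Schwarz inequality, then yields local $\mc{H}^\rho$ convergence of the quadratic variation terms (this is where having (i) available at an exponent larger than $\rho$ is used). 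Hence the exponents $W^{n_k}:=\Upsilon^{n_k}-\Upsilon^{n_k}_0-\tfrac{1}{2}\lo M^{\Upsilon^{n_k}},M^{\Upsilon^{n_k}}\ro$ converge to $W^0$ locally in $\mc{H}^\rho$. A final localization at the (random) times where $W^{n_k}$ and $W^0$ stay below a fixed level — again arranged, via diagonalization, to give one localizing sequence — puts us in a regime where $x\mapsto e^x$ is Lipschitz with a uniform constant; writing the It\^{o} decompositions of $e^{W^{n_k}}$ and $e^{W^0}$, subtracting, and applying the Burkholder--Davis--Gundy and Gronwall inequalities yields $\mc{E}(\Upsilon^{n_k})=e^{W^{n_k}}\to e^{W^0}=\mc{E}(\Upsilon^0)$ locally in $\mc{H}^\rho$, hence in the semimartingale topology by (i).

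The main obstacle is part (ii): the stochastic exponentials $\mc{E}(\Upsilon^n)$ carry no uniform $\mc{H}^\rho$-integrability and $x\mapsto e^x$ is not globally Lipschitz, so no direct global estimate is available; the subsequence-plus-localization principle furnished by (i) is precisely the device that transports the problem into a uniformly bounded regime in which the exponential is Lipschitz and the usual martingale inequalities apply. The only other delicate point — manufacturing, along the chosen subsequence, a single localizing sequence out of the a priori $n$-dependent stopping times, both in (i) and in (ii) — is a routine diagonalization, spelled out in \cite{Nu10} Appendix A.
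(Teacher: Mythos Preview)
The paper does not supply its own proof of this proposition: it is stated as a collection of known facts with citations to \'{E}mery \cite{Em79} and Nutz \cite{Nu10} Appendix A, and is used as a black box thereafter. Your sketch is a correct and faithful outline of the arguments in those references---metrizability plus \'{E}mery's localization lemma for (i), the exponential formula combined with localization to a bounded regime for (ii), and the definitional observation for (iii)---so there is nothing to compare; you have essentially written out what the paper leaves to the citations.
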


One final notation we shall need is that of the \emph{polar cone}.
Given the conic predictably measurable multivalued mapping $\mc{K}$ we define (and easily derive)
\begin{align*}
\mc{K}^{\circ}(t,\omega):
=\left\{l\in\mb{R}^d \,\Big|\, k^{\tr}l\leq 1 \text{ for all } k\in\mc{K}(t,\omega)\right\}
= \bigcap_{j=1}^m\left\{l\in\mb{R}^d \,\Big|\, (K^j_t(\omega))^\tr l\leq 0  \right\},
\end{align*}
where the $K^j, j=1,\ldots,m$ are from Assumption \ref{asscones}. Under the present assumptions on $\mc{K}$ we have that $\mc{K}^\circ$ is again a closed convex predictably measurable multivalued mapping. 

 
\section{Main Results}
\label{sec_sensitivity} 
Having described our framework and relevant background, we can state the first of the main results. Specifically, we provide a more precise structure of the elements in the dual domain as well as the optimizer $\hat{Y}$. A version of this result may be found in \cite{KZ03} Proposition 4.1 for the case where one has nondegenerate It\^{o} dynamics for $S$ and a polyhedral cone $\mc{K}$ which is independent of $(t,\omega)$, see also \cite{LZ07} Proposition 3.2 for the one-dimensional unconstrained case. We extend these results to the case of semimartingale dynamics and predictably measurable constraint sets. 
\begin{thm} Let Assumption \ref{asscones} hold.
\label{thm_exist}
\begin{enumerate}
\item Let $Y\in\mc{Y}(1)$ with $Y_T>0$. Then there exist
an $M$-integrable process $\kappa^Y$ with 
\begin{equation*}
B(\lambda-\kappa^{Y})\in (B\mc{K})^\circ,\quad \mu^A \text{-a.e.}
\end{equation*}
as well as a local martingale $N^Y$ orthogonal to $M$ and a predictable
decreasing c{\`a}dl{\`a}g process $D^Y$ with $D^Y_0=1$ and $D^Y_T>0$ $\mb{P}$-a.s.
such that 
\begin{equation*}
Y=D^Y\,\mc{E}(-\kappa^Y\cdot M+N^Y).
\end{equation*}
\item For the optimizer $\hat{Y}^{y}$ (assumed to exist) we have the representation, 
\begin{equation*}
\hat{Y}^{y}=y\,\mc{E}(-\hat{\kappa}\cdot M +\hat{N}), 
\end{equation*}
for processes $\hat{\kappa}:=\kappa^{\hat{Y}}$ and $\hat{N}:= N^{\hat{Y}}$  which are independent of $y$. In particular the decreasing process from
 (i) satisfies $D^{\hat{Y}}\equiv 1$.
\end{enumerate}
\end{thm}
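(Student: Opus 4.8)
The strategy is to peel a decreasing correction term off $Y$ by a purely trajectorial logarithmic transform, to identify the remaining local martingale via a Galtchouk--Kunita--Watanabe decomposition relative to $M$, and then to turn the defining supermartingale property of $\mc{Y}(1)$ into a pointwise constraint on the resulting integrand using the conic structure of $\U$. First I would note that, testing against the constant wealth $X\equiv1\in\mc{X}(1)$ (admissible since $0\in\U$), the process $Y$ is itself a nonnegative c\`adl\`ag supermartingale, hence absorbed at $0$; since $Y_T>0$ this forces $Y>0$ on $[0,T]$. As $Y$ is continuous (Assumption~\ref{ass_contfilt}) with $Y_0=1$, I would write $Y=\mc{E}(R)$ with $R:=\int_0^\cdot Y_s^{-1}\,dY_s$, $R_0=0$, and take its canonical decomposition $R=L^R+A^R$ into a continuous local martingale plus a continuous finite-variation part. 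Using $\lo M,M\ro=C\cdot A=B^\tr B\cdot A$ from Section~\ref{chp_Uhalfline_model} together with a measurable pseudo-inverse of $C$, I would obtain the decomposition $L^R=-\kappa^Y\cdot M+N^Y$ with $\kappa^Y$ predictable and $M$-integrable and $N^Y$ a continuous local martingale orthogonal to $M$, and set $D^Y:=\exp(A^R)=\mc{E}(A^R)$. Yor's formula then gives the representation $Y=\mc{E}(L^R)\mc{E}(A^R)=D^Y\,\mc{E}(-\kappa^Y\cdot M+N^Y)$, with $D^Y_0=1$ and $D^Y_T=\exp(A^R_T)>0$ a.s.; it would remain to show that $A^R$ is decreasing and that $B(\lambda-\kappa^Y)\in(B\U)^\circ$, $\mu^A$-a.e.

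Next I would exploit the supermartingale property. For $\nu\in\mc{A}_{\U}$, Yor's formula gives $X^\nu Y=\mc{E}\bigl((\nu-\kappa^Y)\cdot M+N^Y+\Gamma^\nu\bigr)$ with drift $\Gamma^\nu:=\bigl(\nu^\tr C(\lambda-\kappa^Y)\bigr)\cdot A+A^R$. Since $X^\nu Y$ is a strictly positive continuous supermartingale, its unique continuous finite-variation part $(X^\nu Y)\cdot\Gamma^\nu$ is decreasing, hence $\Gamma^\nu$ is decreasing; taking $\nu\equiv0$ yields that $A^R$, and therefore $D^Y$, is decreasing. For the polarity I would use that $\U$ is a cone: if $\nu\in\mc{A}_{\U}$ then $c\nu\in\mc{A}_{\U}$ for every $c>0$, so $c\,\nu^\tr C(\lambda-\kappa^Y)\,dA+dA^R\le0$ for all $c>0$; decomposing $dA^R$ into its $A$-absolutely continuous and $A$-singular parts (both nonpositive) and dividing by $c$, $c\to\infty$, forces $\nu^\tr C(\lambda-\kappa^Y)\le0$, $\mu^A$-a.e. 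Applying this to the generators $\nu=K^j\in\mc{A}_{\U}$ of Assumption~\ref{asscones} and writing $C=B^\tr B$ gives $(BK^j)^\tr B(\lambda-\kappa^Y)\le0$ for each $j$, hence $(Bk)^\tr B(\lambda-\kappa^Y)\le0$ for all $k\in\U$, i.e.\ $B(\lambda-\kappa^Y)\in(B\U)^\circ$, $\mu^A$-a.e. This would complete (i).

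For (ii) I would run a variational argument. Apply (i) to $\hat Y=\hat Y^1$ and set $\bar Y:=\hat Y/D^{\hat Y}=\mc{E}(-\hat\kappa\cdot M+\hat N)$ with $\hat\kappa:=\kappa^{\hat Y}$, $\hat N:=N^{\hat Y}$. From the computation above, for $\nu\in\mc{A}_{\U}$ one has $X^\nu\bar Y=\mc{E}\bigl((\nu-\hat\kappa)\cdot M+\hat N\bigr)\exp\bigl((\nu^\tr C(\lambda-\hat\kappa))\cdot A\bigr)$, a positive local martingale times a $(0,1]$-valued decreasing process, the exponent being decreasing by (i); a short conditioning argument shows this is a supermartingale, so $\bar Y\in\mc{Y}(1)$. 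Since $D^{\hat Y}$ is decreasing from $D^{\hat Y}_0=1$, we get $\bar Y_T\ge\hat Y_T$ with strict inequality on $\{D^{\hat Y}_T<1\}$; because $\ti U$ is strictly decreasing, $\E[\ti U(\bar Y_T)]<\E[\ti U(\hat Y_T)]=\ti u(1)$ would hold whenever $\mb{P}(D^{\hat Y}_T<1)>0$, contradicting the optimality of $\hat Y$. Hence $D^{\hat Y}_T=1$ a.s., and, being decreasing and c\`adl\`ag from $1$, $D^{\hat Y}\equiv1$; the representation $\hat Y^y=y\,\mc{E}(-\hat\kappa\cdot M+\hat N)$ and the $y$-independence of $\hat\kappa,\hat N$ then follow from $\mc{Y}(y)=y\,\mc{Y}(1)$ and $\hat Y^y=y\hat Y^1$.

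The main obstacle is the technical core of the first step: extracting $\kappa^Y$ with the correct predictable measurability and $M$-integrability from the Galtchouk--Kunita--Watanabe decomposition when $C$ may be degenerate --- precisely what the $A,B,C$ apparatus of Section~\ref{chp_Uhalfline_model} (measurable diagonalization and pseudo-inverses) is built for --- and, in the second step, carefully separating the $A$-continuous from the $A$-singular part of the drift $A^R$ before reading off the pointwise polarity condition, since $A^R$ need not be absolutely continuous with respect to $A$. Everything else is bookkeeping with Yor's formula and uniqueness of the continuous semimartingale decomposition.
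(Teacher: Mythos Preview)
Your overall line is the paper's own: multiplicative decomposition of $Y$, drift constraint from the supermartingale property of $X^\nu Y$, and the polyhedral generators $K^1,\ldots,K^m$ to reach the polar-cone condition (the paper delegates the decomposition step to \cite{LZ07} Proposition~3.2 while you spell it out via the stochastic logarithm). Two points, however, do not go through as written. First, the theorem assumes only Assumption~\ref{asscones}; the paper explicitly remarks that Assumption~\ref{ass_contfilt} is \emph{not} needed in this section, and even under that assumption your claim that $Y$ is continuous is wrong: the local-martingale part $L^R$ of the stochastic logarithm is indeed continuous, but the predictable finite-variation part $A^R$ may jump (for instance $Y=D\,\mc{E}(-\lambda\cdot M)$ with $D$ predictable decreasing, $D_0=1$, and a jump, lies in $\mc{Y}(1)$). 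Thus $D^Y=\mc{E}(A^R)$, not $\exp(A^R)$; the factorization $Y=\mc{E}(L^R)\mc{E}(A^R)$ still follows from $[L^R,A^R]=0$, so the repair is cosmetic, but you must either work c{\`a}dl{\`a}g throughout (as the paper does, invoking Yoeurp's lemma for the cross term $d[F',N]$) or add Assumption~\ref{ass_contfilt} as an explicit hypothesis and correct the formula for $D^Y$.

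Second, in (ii) you apply (i) to $\hat Y^1$ without verifying its hypothesis $\hat Y^1_T>0$. This is not automatic --- for $p<0$ one has $\ti U(0)=0$, so $\hat Y_T$ vanishing on a set of positive measure is not ruled out by finiteness of $\ti u$ --- and the paper singles it out as ``the key'' step, proving it separately in Appendix~\ref{existappend} via duality: $\hat X_T>0$ from the primal first-order condition $\hat X_T=-\ti U'(d\hat\zeta_c/d\mb P)$, and a calculus-of-variations argument then produces $\hat Y\in\mc{Y}(1)$ with $\hat Y_T\ge U'(\hat X_T)/y>0$. Once $\hat Y^1_T>0$ is secured, your comparison with $\bar Y=\hat Y/D^{\hat Y}$ and strict monotonicity of $\ti U$ is exactly the argument behind \cite{LZ07} Corollary~3.3, which the paper cites.
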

The next proposition relates the optimizers to the solution of a quadratic semimartingale BSDE, similarly to 
Mania and Tevzadze \cite{MT08} for the unconstrained case and \cite{Nu209} for the constrained case, see also \cite{HIM05} and \cite{Mo09}. 
\begin{prop}
\label{thm_bijection}
Let Assumptions \ref{ass_contfilt}, \ref{ass_expmom} and \ref{asscones} hold.
\begin{enumerate}
\item Let $\hat{\nu}$ denote the optimal strategy and $\hat{N}$ the local martingale from Theorem \ref{thm_exist}(ii). 
Then for every $x>0$ the triple $(\hat{\Psi},\hat{Z},\hat{N})$, where
\begin{equation*}
\hat{\Psi}:=\log\!\left(\frac{u'(x)\hat{Y}^{1}}{U'(\hat{X})}\right)\quad\text{ and }\quad \hat{Z}:=-\hat{\kappa}+(1-p)\hat{\nu},
\end{equation*}
is the unique solution to the BSDE
\eqref{BSDE} with $\hat{\Psi}\in\mf{E}$ where
\begin{align*}
F(\cdot,z)=\frac{1}{2}\,\big\|Bz\big\|^2-\frac{q}{2}\,\big\|\Pi_{B\U}\big(B(z+\lambda)\big)\big\|^2.
\end{align*}
We write $\Pi$ for the nearest point or projection operator onto the indicated cone.
\item Given the unique solution $(\hat{\Psi},\hat{Z},\hat{N})$ from (i) we can write the optimizers, given initial values $x=1$, $y=1$, up to indistinguishability as  
\begin{align*}
\hat{X}^1=\mc{E}(\ti{\nu}\cdot M +\ti{\nu}\cdot \lo M,M\ro\lambda),\quad\quad
\hat{Y}^{1}=\mc{E}\big(-\ti{\kappa}\cdot M+\hat{N}\big),
\end{align*}
where the predictable integrands $\ti{\nu}$ and $\ti{\kappa}$
are defined via
\begin{align*}
\ti{\nu}:=\frac{1}{1-p}P^\tr\ti{\Gamma}^{\frac12}\left[\Pi_{B\mc{K}}\left(B(\hat{Z}+
\lambda)\right)\right],\quad
\ti{\kappa}:=
P^\tr\ti{\Gamma}^{\frac12}\left[B\lambda-\Pi_{(B\mc{K})^\circ}\left(B(\hat{Z}+\lambda)\right)\right]
\end{align*}
and satisfy, $\mu^A$-a.e. $B\ti{\nu}=B\hat{\nu}\text{ and }B\ti{\kappa}=B\hat{\kappa}. $
The process $(\ti{\Gamma}^{i,j})_{i,j=1,\ldots,d}$ is chosen to be a predictable process valued in the space of $d\times d$ diagonal matrices such that it satisfies
\begin{equation*}
\ti{\Gamma}^{ij}=
\begin{cases}
1\big/\Gamma^{ii}& \text{ if } i=j \text{ and }\Gamma^{ii}\neq0\\
0 & \text{ if }i\neq j.
\end{cases}
\end{equation*}
\end{enumerate}
\end{prop}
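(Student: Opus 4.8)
The plan is to establish Proposition~\ref{thm_bijection} by moving between the duality characterization of the optimizers in Theorem~\ref{thm_WZ09}, the structural decomposition of the dual domain in Theorem~\ref{thm_exist}, and the existence/uniqueness theory for the quadratic semimartingale BSDE \eqref{BSDE} collected in Appendix~\ref{appendBSDE}. For part (i), I would first write down, using Theorem~\ref{thm_exist}(ii), the optimal dual variable as $\hat{Y}^1=\mc{E}(-\hat{\kappa}\cdot M+\hat{N})$ and the optimal wealth as $\hat{X}=\mc{E}(\hat{\nu}\cdot M+\hat{\nu}\cdot\lo M,M\ro\lambda)$, then use the factorization $U'(\hat{X})=\hat{X}^{p-1}$ and $u'(x)\hat{Y}^1=\hat Y^{u'(x)}=U'(\hat X_T)$ from Theorem~\ref{thm_WZ09}(iii) to see that $\hat{\Psi}$ is a well-defined continuous semimartingale with $\hat{\Psi}_T=0$. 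Expanding $\hat\Psi=\log(u'(x))+\log\hat Y^1-(p-1)\log\hat X$ via It\^o/Yor's formula for stochastic exponentials gives its canonical decomposition: the martingale part becomes $(-\hat\kappa-(p-1)\hat\nu)\cdot M+\hat N=\hat Z\cdot M+\hat N$ (identifying $\hat Z=-\hat\kappa+(1-p)\hat\nu$), and the finite-variation part is an explicit quadratic expression in $\hat\kappa,\hat\nu,\lambda$ integrated against $A$ plus the $-\tfrac12 d\lo\hat N,\hat N\ro$ term.

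The crux of part (i) is then to show that this finite-variation drift equals exactly $-F(\cdot,\hat Z)\,dA$ with $F(\cdot,z)=\tfrac12\|Bz\|^2-\tfrac q2\|\Pi_{B\U}(B(z+\lambda))\|^2$. This is where the optimality of $\hat\nu$ and $\hat\kappa$ enters: $\hat\nu$ must pointwise ($\mu^A$-a.e.) maximize the local drift of the objective, which by the power-utility factorization reduces to a pointwise concave quadratic maximization over $\U(t,\omega)$; its maximizer is characterized by the projection $\Pi_{B\U}(B(\hat Z+\lambda))$, and the complementary-slackness / polarity relation $B(\lambda-\hat\kappa)\in(B\U)^\circ$ from Theorem~\ref{thm_exist}(i) pins down $B\hat\kappa$ in terms of the same projection via the Moreau decomposition $B(\hat Z+\lambda)=\Pi_{B\U}(B(\hat Z+\lambda))+\Pi_{(B\U)^\circ}(B(\hat Z+\lambda))$ together with the orthogonality of the two projections. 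Substituting these identities collapses the drift to the stated $F$. One should also verify $F$ is a legitimate quadratic generator: it has quadratic growth in $z$ (using boundedness of $B^\tr B$ via boundedness of $A$ and $\Gamma$), is predictable, and Assumption~\ref{ass_expmom} supplies the exponential integrability of the relevant mean--variance quantity so that $\hat\Psi\in\mf E$; uniqueness of the solution with $\hat\Psi\in\mf E$ is then exactly the uniqueness statement of the appendix applied to this $F$.

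For part (ii) I would run the construction in reverse. Start from the unique solution $(\hat\Psi,\hat Z,\hat N)$ in $\mf E$, \emph{define} $\ti\nu$ and $\ti\kappa$ by the displayed projection formulas, and first check these are genuine $M$-integrable predictable processes: measurability of the nearest-point maps onto the predictably measurable cones $B\U$ and $(B\U)^\circ$ follows from Assumption~\ref{asscones} and standard measurable-selection results, and the integrability follows because $B(\hat Z+\lambda)\in L^2(d\lo M,M\ro)$ (as $\hat Z$ solves the BSDE and $\lambda$ is $M$-integrable) while $\Pi$ is $1$-Lipschitz. Next, using $B^\tr B\cdot A=\lo M,M\ro$ and the pseudo-inverse identity $P^\tr\ti\Gamma^{1/2}\Gamma^{1/2}P$ acting as the orthogonal projection onto $\mathrm{range}(B^\tr)$, one verifies $B\ti\nu$ and $B\ti\kappa$ equal the bracketed projection terms $\mu^A$-a.e., so that all $d\lo M,M\ro$-integrals (hence the wealth and dual exponentials) only depend on $\ti\nu,\ti\kappa$ through $B\ti\nu,B\ti\kappa$. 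Then I would recognize the two stochastic exponentials $\mc E(\ti\nu\cdot M+\ti\nu\cdot\lo M,M\ro\lambda)$ and $\mc E(-\ti\kappa\cdot M+\hat N)$ as an admissible wealth process and an element of $\mc Y(1)$ respectively, with $\log(\cdot)$-combination giving back $\hat\Psi$; since $\hat\Psi_T=0$ forces the terminal relation $\hat Y^1_T=U'(\hat X_T)$ (up to the constant $u'(1)$), the duality verification lemma behind Theorem~\ref{thm_WZ09}(iii) identifies these as the optimizers, whence $B\ti\nu=B\hat\nu$ and $B\ti\kappa=B\hat\kappa$ $\mu^A$-a.e. by the uniqueness in Theorem~\ref{thm_WZ09}(i) and Theorem~\ref{thm_exist}.

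The main obstacle I anticipate is the pointwise optimization and polarity argument in part (i): carefully justifying that $\hat\nu$ indeed maximizes the drift $\mu^A$-a.e. (a first-order / variational argument that needs the admissibility of perturbations $\hat\nu+\varepsilon(\eta-\hat\nu)$ for $\eta\in\U$ and a dominated-convergence passage to the limit, all under the exponential-moments control), and then matching it with the dual-side polarity $B(\lambda-\hat\kappa)\in(B\U)^\circ$ to get both the cleanly-closed-form $F$ and the $\ti\kappa$ formula, including the subtle point that only the images under $B$ (equivalently, modulo $\ker B$) are determined, which is why the statement is phrased with $B\ti\nu=B\hat\nu$ rather than $\ti\nu=\hat\nu$. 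The bookkeeping with $P,\Gamma,\ti\Gamma,B$ and the fact that $\Gamma^{ii}=0$ corresponds to degenerate directions of the market requires care but is routine once the pseudo-inverse structure is set up.
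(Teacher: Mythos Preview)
Your proposal is sound and lands on the same destination as the paper, but the paper takes a shorter route at precisely the two places you flag as obstacles.

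For part (i), you plan to obtain the first-order condition on $\hat\nu$ by a \emph{primal} variational argument (perturbing $\hat\nu$ to $\hat\nu+\varepsilon(\eta-\hat\nu)$ and passing to the limit under dominated convergence). The paper avoids this entirely: it uses Corollary~\ref{cor_optkapandnu}, which is a direct consequence of the fact that $\hat X\hat Y$ is a martingale (Theorem~\ref{thm_WZ09}(iii)). That corollary gives both $\hat\nu^\tr B^\tr B(\lambda-\hat\kappa)=0$ and $(\nu-\hat\nu)^\tr B^\tr B(\lambda-\hat\kappa)\leq0$ for all admissible $\nu$; plugging in $\hat Z=-\hat\kappa+(1-p)\hat\nu$ rewrites the second inequality as the subgradient condition for $\hat\nu$ to minimize $\eta\mapsto\|B(\eta-(\hat Z+\lambda)/(1-p))\|^2$ over $\mc K$, and then Moreau's Pythagorean identity closes the computation of $F$. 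So the ``main obstacle'' you anticipate simply does not arise: the first-order conditions come for free from duality rather than from differentiating the primal objective.

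For part (ii), you propose a \emph{verification} argument: build candidate processes from $(\hat\Psi,\hat Z,\hat N)$, check that one is an admissible wealth and the other lies in $\mc Y(1)$, and invoke a duality verification lemma. The paper is more direct: since part (i) has already established that the unique BSDE solution coincides with the triple built from the optimizers, one \emph{knows} $\hat Z=-\hat\kappa+(1-p)\hat\nu$; combining this with Corollary~\ref{cor_optkapandnu} and Moreau's theorem immediately yields $(1-p)B\hat\nu=\Pi_{B\mc K}(B(\hat Z+\lambda))$ and $B(\lambda-\hat\kappa)=\Pi_{(B\mc K)^\circ}(B(\hat Z+\lambda))$. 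The pseudo-inverse construction $\ti\nu,\ti\kappa$ then matches these on the image of $B$, and indistinguishability of the stochastic integrals (and hence of the exponentials) follows since $\lo M,M\ro$ only sees the $B$-image. No verification or admissibility check on $\ti\nu$ is needed, which is convenient because $\ti\nu$ need not itself lie in $\mc K$ (only $B\ti\nu\in B\mc K$ is guaranteed), a point your verification route would have to address.

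In short: your plan works, but the paper's proof is cleaner because it systematically leverages the already-proved duality relations (the martingale property of $\hat X\hat Y$) rather than redoing primal optimality or verification from scratch.
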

\begin{rmk} The content of the above proposition is essentially known, cf. Nutz \cite{Nu209} Corollaries 3.12 and 5.18, although it is stated differently there. Define the process $L^{\textit{op}}:=\exp(\hat{\Psi})$, then it is easy to see that $L^{\textit{op}}$ is the \emph{opportunity process} of \cite{Nu209} where the author shows that the Galtchouk-Kunita-Watanabe decomposition of $L^{\textit{op}}$ satisfies an appropriate BSDE. In contrast via our Theorem \ref{thm_exist}, we can apply It\^{o}'s formula directly to $\hat{\Psi}$ to derive the BSDE. As a consequence we augment the results of \cite{Nu209} by providing a simple additive decomposition of the process $\hat{Z}$ into a part $\hat{\kappa}$ with well defined properties related to the dual problem and polar cone $(B\mc{K})^{\circ}$ and a part related to the optimal strategy $\hat{\nu}$. We also note that the unique correspondence above is derived via a BSDE comparison theorem under an exponential moments condition, rather than a verification theorem as in \cite{Nu209}. We mention finally that the formula for the optimal strategy is as in the cited references, however, due to Moreau's decomposition theorem, which is available for cones, no measurable selection argument is involved. 
\end{rmk}

\begin{rmk}\label{rmk_nullinv}
We point out a consequence of item (ii) above. Whilst the wealth process is unique in the space of c\`{a}dl\`{a}g processes, the representation of the strategies is not unless $C$ is invertible or the strategy is considered in the image of $B$; similar remarks apply to $\hat{\kappa}$. This is related to the discussion of what is known as \emph{null-investments} in the literature.
\end{rmk}

The main idea of the present article is to use the link from Proposition \ref{thm_bijection} to study the continuity of the utility maximization problem with respect to its inputs via BSDE methods. More explicitly, we are interested in the dependence of the optimal objects with respect to the market price of risk process $\lambda$, the probability measure $\mathbb{P}$, the investor's relative risk aversion parameter $p$ and the constraint set $\U$. 
As pointed out above, the dependence on the initial wealth is a simple linear one, due to the factorization property.
Hence we vary only the four inputs $\lambda$, $\mathbb{P}$, $p$ and $\U$ by means of sequences 
\[(\lambda^{n})_{n\in\mathbb{N}},\; (\mathbb{P}^{n})_{n\in\mathbb{N}},\; (p^{n})_{n\in\mathbb{N}} \text{ and } (\U^{n})_{n\in\mathbb{N}}\] 
of parameters that converge to $\lambda=:\lambda^0$, $\mathbb{P}=:\mathbb{P}^0$, $p=:p^0$ and $\U=:\U^0$ in an appropriate sense. 

Fix $n\in\mb{N}$, now we have that $\lambda^n$ is a predictable $M$-integrable process and $\mb{P}^{n}$ is assumed to be a measure equivalent to $\mb{P}$ with Radon-Nikod{\'y}m derivative
\begin{equation*}
\frac{d\mb{P}^{n}}{d\mb{P}}=\mc{E}(-\beta^{n}\cdot M+L^{n})_T.   
\end{equation*}
Here $(\beta^{n})_{n\in\mb{N}}$ is a sequence of $M$-integrable processes, $(L^{n})_{n\in\mb{N}}$ a sequence of continuous $\mb{P}$-local martingales orthogonal to $M$ and $\beta^{0}\cdot M:\equiv L^{0}:\equiv0$. Due to the Girsanov theorem the process
$M^{n}:=M+\lo M,M\ro\beta^{n}$ 
is a (continuous) $\mb{P}^{n}$-local martingale. This leads to dynamics for the asset $S=S^{n}$ under $\mb{P}^{n}$, of the form 
\begin{equation*}
dS_t^{n}=\mathrm{Diag}(S_t^{n})\Big(dM^{n}_t+d\lo M^{n},M^{n}\ro_t(\lambda^{n}_t-\beta^{n}_t)\Big),
\end{equation*}
where we have used the continuity to deduce $\lo M^{n},M^{n}\ro=\lo M,M\ro=C\cdot A$.
Each risk aversion parameter $p^{n}$ is valued in $(-\infty,0)\cup(0,1)$ and corresponds to a utility function 
\begin{equation*}
U^{n}(x):=\frac{1}{p^{n}}\,x^{p^{n}},\quad x>0.
\end{equation*}
The cone $\U^{n}$ is assumed to satisfy Assumption \ref{asscones} so that we can  
consider the primal problem as a function of the inputs 
\begin{equation*}
u^{n}(x):=\sup_{\nu\in\mc{A}_{\U^{n}}}\,\E_{\mb{P}^{n}}\!\!\left[U^{n}\Big(X^{{n},x,\nu}_T\Big)\right],
\end{equation*}
where $X^{{n},x,\nu}$ represents the wealth acquired from an investment in $S^{n}$ and considered under $\mathbb{P}^{n}$, so that we have
\begin{equation}
\label{eq_wealthUnderPn}
X^{{n},x,\nu}=x\,\mc{E}(\nu\cdot M +\nu\cdot \lo M,M\ro\lambda^{n}\big)=x\,\mc{E}(\nu\cdot M^{n} +\nu\cdot \lo M^{n},M^{n}\ro(\lambda^{n}-\beta^{n})\big).
\end{equation}
The definition of $\mc{A}_{\mc{K}^{n}}$ is invariant under changes of equivalent probability measures so that the above maximization is well defined under suitable assumptions on the parameters.
\begin{ass} 
\label{ass_unif.expmom}
Each $\U^{n}, n\in\mb{N}_0$, satisfies Assumption \ref{asscones} and for all $c>0$
\begin{equation*} 
\sup_{n\in\mathbb{N}_0}\E_\mathbb{P}\!\left[\exp\!\bigg(c\,\Big(\lo \lambda^n\cdot M,\lambda^n\cdot M\ro_T+\lo \beta^n\cdot M,\beta^n\cdot M\ro_T+\lo L^{n},L^{n}\ro_T\Big)\bigg)\right]<+\infty.
\end{equation*}
\end{ass}
The previous assumption ensures that for fixed $n\in\mathbb{N}_0$ and all $c>0$ \[\E_{\mathbb{P}^{n}}\!\!\left[\exp\!\left(c\,\big\lo (\lambda^{n}-\beta^{n})\cdot M,(\lambda^{n}-\beta^{n})\cdot M\big\ro_T\right)\right]<+\infty, \]
which can be shown similarly to the proof of Proposition \ref{prop_EMM} and is left to the reader. In particular we may apply Theorem \ref{thm_WZ09} for each ${n}\in\mb{N}_0$ to deduce the existence of a primal optimizer and corresponding
optimal portfolio,
\begin{equation}
\label{eq_optimizers}
\hat{X}^{n}:=\hat{X}(\lambda^{n},\mb{P}^{n},p^{n},\U^{n}),\quad \hat{\nu}^{n}:=\hat{\nu}(\lambda^{n},\mb{P}^{n},p^{n},\U^{n}), 
\end{equation}
where we write the optimizers as a function of the parameters $(\lambda^{n},\mb{P}^{n},p^{n},\U^{n})$. A similar convention holds for the value function $u^{n}:=u(\lambda^{n},\mb{P}^{n},p^{n},\U^{n})$. We also use the notation $\hat{X}:=\hat{X}^{0}$ and $\hat{\nu}:=\hat{\nu}^{0}$ and observe that due to the integrability assumption above $L^{n}$ is actually a true martingale for every $n\in\mb{N}_0$.

The third main result shows that under suitable assumptions the optimizers are continuous
with respect to these inputs. Note that in the following assumption since each $\U^n$ is polyhedral
the projection $B\U^n$ is closed.
\begin{ass}
\label{ass_prefconv}
The preferences and markets converge in the following sense
\begin{gather*}
\lim_{n\to+\infty}p^{n}=p,\\
\lim_{n\to+\infty}\big\lo (\lambda^{n}-\lambda)\cdot M, (\lambda^{n}-\lambda)\cdot M\big\ro_T 
+ \big\lo \beta^{n}\cdot M, \beta^{n}\cdot M\big\ro_T +
\big\lo L^{n}, L^{n}\big\ro_T=0
\end{gather*}
in $\mb{P}$-probability.
We assume that 
\begin{equation*}
\Lim_{n\to+\infty} B\U^{n}= B\U \text{ $\mu^A$-a.e.}, 
\end{equation*}
where $\Lim$ denotes the closed limit and we refer to Appendix \ref{appendKur} for more details.
\end{ass}
We can now state the main result of the present paper. 
\begin{thm}
\label{thm_mainstab}
Let Assumptions \ref{ass_contfilt}, \ref{ass_unif.expmom} and \ref{ass_prefconv} hold and $\hat{X}^{n}$ and $\hat{\nu}^{n}$ be as in \eqref{eq_optimizers}, then as ${n}\to +\infty$:
\begin{enumerate}
\item The sequence of processes $(\hat{\nu}^{{n}}-\hat{\nu})\cdot M$ converges to zero in $\mc{M}^2$.  
\item The family of wealth processes $\hat{X}^n\in\mc{X}(x)$, $n\in\mb{N}$, converges to $\hat{X}\in\mc{X}(x)$ in the semimartingale topology.
\item The functions $u^{n}$ and their derivatives $(u^{n})'$ converge pointwise to $u$ and $u'$ respectively.
\end{enumerate}
\end{thm}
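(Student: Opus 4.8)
The plan is to transfer all four convergences -- in $p^n$, $\lambda^n$, $\mathbb{P}^n$ and $\mathcal{K}^n$ -- onto the single object $\hat\Psi^n$, the solution of the BSDE \eqref{BSDE} associated with the $n$th problem via Proposition \ref{thm_bijection}, and then to invoke the stability theory for quadratic semimartingale BSDEs from Appendix \ref{appendBSDE} (the results of \cite{MW10}). First I would rewrite each $n$th problem under the \emph{fixed} reference measure $\mathbb{P}$ and the \emph{fixed} local martingale $M$: absorbing the Girsanov drift $\beta^n$ and the orthogonal part $L^n$ into the data, the triple $(\hat\Psi^n,\hat Z^n,\hat N^n)$ solves \eqref{BSDE} with a generator $F^n(\cdot,z)$ built from $p^n$, $\lambda^n$, $\beta^n$, $L^n$ and the projection onto $B\mathcal{K}^n$. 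The key structural point is that Assumption \ref{ass_unif.expmom} gives a \emph{uniform} exponential-moments bound on the mean--variance tradeoff, which by Proposition \ref{thm_bijection}-type a priori estimates (again from \cite{MW10}) yields $\sup_n \mathbb{E}[\exp(c(\hat\Psi^n)^*)]<\infty$ for all $c>0$, i.e. uniform control of the solutions in $\mathfrak{E}$, together with a uniform bound on $\lVert \hat Z^n\cdot M + \hat N^n\rVert$ in a suitable BMO-type or $\mathcal{M}^2$ sense after the exponential change of measure.

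Next I would establish convergence of the generators $F^n \to F$ in the sense required by the BSDE stability theorem. The terms $\tfrac12\lVert Bz\rVert^2$ and the exponent $q^n=p^n/(p^n-1)\to q$ converge by $p^n\to p$; the term $\tfrac{q^n}{2}\lVert \Pi_{B\mathcal{K}^n}(B(z+\lambda^n))\rVert^2$ is handled by combining (a) continuity of the projection operator $z\mapsto \Pi_{B\mathcal{K}^n}(\cdot)$ in the closed-limit topology (Appendix \ref{appendKur}: if $B\mathcal{K}^n \to B\mathcal{K}$ in the Kuratowski sense then $\Pi_{B\mathcal{K}^n}\to \Pi_{B\mathcal{K}}$ pointwise, using nonexpansiveness of metric projections onto closed convex cones), with (b) the convergence $\lambda^n\to\lambda$ in the sense that $\lo(\lambda^n-\lambda)\cdot M,(\lambda^n-\lambda)\cdot M\ro_T\to 0$ in $\mathbb{P}$-probability. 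The Girsanov corrections coming from $\beta^n$ and $L^n$ enter $F^n$ linearly in $z$ (a drift shift) plus a quadratic-variation term, and vanish because $\lo\beta^n\cdot M,\beta^n\cdot M\ro_T + \lo L^n,L^n\ro_T\to 0$ in probability; the change from $\mathbb{P}^n$ back to $\mathbb{P}$ costs a density which is uniformly integrable of all orders by Assumption \ref{ass_unif.expmom}, so no integrability is lost. Feeding the uniform $\mathfrak{E}$-bound and $F^n\to F$ into the stability result of \cite{MW10} gives $\hat\Psi^n \to \hat\Psi$ and, crucially, $(\hat Z^n - \hat Z)\cdot M + (\hat N^n - \hat N) \to 0$ in $\mathcal{M}^2$ (or in the semimartingale topology with a locally-$\mathcal{H}^2$ subsequence characterisation via Proposition \ref{semimarttop}(i)).

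From there the three conclusions follow. For (i): by Proposition \ref{thm_bijection}, $B\hat\nu^n = \tfrac{1}{1-p^n}\Pi_{B\mathcal{K}^n}(B(\hat Z^n+\lambda^n))$ in the image of $B$, so $(\hat\nu^n-\hat\nu)\cdot M \to 0$ in $\mathcal{M}^2$ follows from $\hat Z^n\to\hat Z$ in $\mathcal{M}^2$, $\lambda^n\to\lambda$, $p^n\to p$, and joint continuity of the projection -- the only subtlety is that $\hat\nu^n\cdot M$ depends on $\hat\nu^n$ only through $B\hat\nu^n$, which is exactly what Proposition \ref{thm_bijection}(ii) controls, so the null-investment ambiguity of Remark \ref{rmk_nullinv} is harmless. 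For (ii): $\hat X^n = \mathcal{E}(\hat\nu^n\cdot M + \hat\nu^n\cdot\lo M,M\ro\lambda^n)$; since $\hat\nu^n\cdot M \to \hat\nu\cdot M$ in $\mathcal{M}^2\subset$ semimartingale topology and $\hat\nu^n\cdot\lo M,M\ro\lambda^n \to \hat\nu\cdot\lo M,M\ro\lambda$ (using $\lambda^n\to\lambda$ and a Kunita--Watanabe estimate to bound the finite-variation part), stability of the stochastic exponential, Proposition \ref{semimarttop}(ii), gives $\hat X^n\to\hat X$ in the semimartingale topology. For (iii): the value functions satisfy $u^n(x)=U^n(x)c_{p^n}$ with $c_{p^n}$ expressible through $\hat\Psi^n_0$ (indeed $\hat\Psi_0 = \log(u'(x)/U'(x)) = \log c_p$ type identity from Proposition \ref{thm_bijection}(i)), so $\hat\Psi^n_0\to\hat\Psi_0$ and $p^n\to p$ yield pointwise convergence of $u^n$ and, by the explicit power form plus conjugacy from Theorem \ref{thm_WZ09}(iii), of $(u^n)'$.

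The main obstacle I anticipate is the generator convergence in the \emph{nonconvex} Girsanov-perturbed setting: one must show that the map $(t,\omega,z)\mapsto F^n(t,\omega,z)$ converges to $F$ in the precise topology demanded by the stability theorem of \cite{MW10} -- typically pointwise in $z$, $\mu^A$-a.e., together with a uniform local-Lipschitz/quadratic-growth bound that is preserved under the change $\mathbb{P}^n\to\mathbb{P}$ -- while simultaneously the projection sets $B\mathcal{K}^n$ are only converging in the weak Kuratowski sense. Handling the interaction of the measure change (which rescales everything by a density whose moments are controlled only via Assumption \ref{ass_unif.expmom}) with the set convergence, and checking that the a priori $\mathfrak{E}$-estimates are genuinely uniform in $n$ and not just finite for each $n$, is where the real work lies; everything after the BSDE stability input is essentially bookkeeping with Proposition \ref{semimarttop} and the explicit formulas of Proposition \ref{thm_bijection}.
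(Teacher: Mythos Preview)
Your proposal is correct and follows essentially the same route as the paper: transform each $n$th BSDE from $\mathbb{P}^n$ to $\mathbb{P}$ (the paper does this explicitly via the substitution $(\hat\Xi^n,\hat V^n,\hat O^n)=(\hat\Psi^n+L^n-\tfrac12\langle L^n,L^n\rangle,\,\hat Z^n,\,\hat N^n+\langle\hat N^n,L^n\rangle+L^n)$, which you sketch), verify uniform quadratic growth / local Lipschitz bounds and $\mu^A$-a.e.\ convergence of the drivers (the projection convergence coming from Kuratowski convergence of $B\mathcal{K}^n$), apply the stability theorem from Appendix~\ref{appendBSDE}, and then read off (i)--(iii) from the explicit formulas in Proposition~\ref{thm_bijection} together with Proposition~\ref{semimarttop}. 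One small recalibration: the obstacle you flag at the end---uniform $\mathfrak{E}$-control of the $\hat\Psi^n$---is not where the work lies, since Theorem~\ref{ThmStab} only requires $\hat\Psi^n\in\mathfrak{E}$ for each $n$ together with the \emph{uniform} exponential moments of $|\xi^n|+|\alpha^n|_1$ (which is exactly Assumption~\ref{ass_unif.expmom}); the genuine effort is in the driver-convergence estimate, precisely as in the paper's Proposition~\ref{prop_Fnproperties}(iv). Also note the market price of risk under $\mathbb{P}^n$ is $\lambda^n-\beta^n$, so the projection argument in the strategy formula should read $\Pi_{B\mathcal{K}^n}\bigl(B(\hat Z^n+\lambda^n-\beta^n)\bigr)$, though this is immaterial since $\beta^n\to 0$.
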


\begin{rmk}
We can establish \emph{identical} results for the corresponding sequence of dual problems and their optimizers.
However since these are not the main objects of interest we pursue them further in Section \ref{sec_stability}. 
\end{rmk}

\begin{rmk}
We discuss here in more detail how the theorem above relates to others in the literature. When $\beta^{n}\cdot M\equiv L^{n}\equiv0$, $p^{n}= p$ and $\mc{K}^{n}\equiv\mathbb{R}^d$ we are in the setting of \cite{LZ07}. Observe that our Assumption \ref{ass_unif.expmom} is more restrictive than the notion of ``V-relative compactness" introduced therein. Thus by fixing the utility and imposing stricter conditions on the $\lambda^{n}$ we get convergence of the whole path process together with the convergence of the optimal strategies, strengthening the main results of \cite{La09} as well as \cite{LZ07} where one gets convergence in probability of the optimal \emph{terminal} values $\hat{X}^{n}_T$. From the convergence in the semimartingale topology we deduce then the corresponding continuity results given in \cite{BK10} where $T$ is replaced by a stopping time $\tau$. 

When $\beta^{n}\cdot M\equiv L^{n}\equiv0$, $\lambda^{n}\equiv \lambda$ and $\mc{K}^{n}\equiv\mathbb{R}^d$
we recover \cite{Nu10} Corollary 5.7. Therein the process $S$ and filtration do not need to be continuous,
so that this result is more general than those presented here. The reason for this is that when only the risk aversion parameter varies one can compare the opportunity processes directly via Jensen's inequality. 
When $\mb{P},\lambda$ and $\mc{K}$ also vary such an approach seems not to be feasible, hence our reliance on BSDE methods alone which necessitates more stringent assumptions.   

For $\lambda^{n}\equiv \lambda$ and $\mc{K}^{n}\equiv\mathbb{R}^d$ observe that under our assumptions
$(\mb{P}^n)_{n\in\mb{N}}$ converges to $\mb{P}$ in total variation. Thus we recover \cite{KZ07} Theorem 1.5 
in the case where there is no random endowment and the utility is power. Similarly to the case of \cite{LZ07} above, our Assumption \ref{ass_unif.expmom} is more restrictive than Assumption (UI) therein. As a consequence we partially extend their results to convergence of the optimal wealth process in the semimartingale topology in a setting without random endowment.  

When $p=0$, $\lambda^{n}\equiv\lambda$ and, in addition to the cones and measure, the information structure is also allowed to vary, \cite{K10} obtains results similar to ours for the num\'{e}raire portfolio using its explicit formula. The problem there differs from ours as it is ``myopic" and as such there is no opportunity process and corresponding BSDE in the sense discussed here, so that one cannot directly compare the two approaches. We note however that in both cases the limiting cone is taken to be the closed limit of the sequence of cones $(\U^{n})_{n\in\mb{N}}$.

An approach similar to ours was used in \cite{Fr09} for the exponential indifference value when the mean-variance tradeoff process is bounded. There one has $\lambda^n\equiv\lambda$ and $\beta^{n}\cdot M\equiv L^{n}\equiv0$ so that the quadratic growth and locally Lipschitz assumptions on the respective BSDEs are uniform in $n$ so that a corresponding stability result can be used. 

As a final remark, when the utility function is allowed to vary, one typically needs to assume that the sequence converges pointwise and satisfies a uniform growth condition, see \cite{JN04,KZ07,La09}. This is implied by our Assumption \ref{ass_prefconv} so  that we are consistent with the literature in this respect.
\end{rmk}

\begin{rmk}
\label{rmk_KK}
Here we elaborate further on the type of convergence assumed on the cones. Together with Proposition \ref{kuraconv}, Assumption \ref{ass_prefconv}
implies that the projections $\Pi_{B\U^n}$ converge pointwise to $\Pi_{B\U}$ which is the key property in showing the convergence of the drivers of the related BSDEs. Define the set 
\begin{equation*}
\mathfrak{N}(t,\omega):=\ker\!\big(C(t,\omega)\big)=\ker\!\big(B(t,\omega)\big),
\end{equation*}
a closed predictably measurable multivalued mapping. This is the set of null-investments described in Karatzas and Kardaras \cite{KK07}.
In \cite{K10} the author replaces Assumption \ref{ass_prefconv} with 
\begin{equation*}
\mathfrak{N}\subset \U^n \text{ for all } n\in\mb{N}_0 \text{ and } \Lim_{n\to+\infty} \U^{n}= \U \quad \mu^A\text{-a.e.}
\end{equation*}
Proposition \ref{prop_Kard} shows that this is sufficient to imply $\Lim_{n\to+\infty} B\U^{n}= B\U$ $\mu^A$-a.e. 
so that the results of the present article remain valid under this alternative assumption. The requirement $\mathfrak{N}\subset \U^n \text{ for all } n\in\mb{N}_0$ means that although the investor faces investment constraints imposed on their portfolio. These constraints must be compatible with the null-investments in the sense that simultaneously the agent must be allowed to choose null-investment strategies. When $\ker(C)$ has a complicated structure this can be difficult to check and thus we prefer Assumption \ref{ass_prefconv}. Note that $\Lim_{n\to\infty} \U^{n}= \U$ alone is \emph{not} sufficient for the stability result to hold as is illustrated by a simple counterexample.

Consider a standard one-dimensional Brownian motion $W$ and set $M:=(W,0)^\tr$. Taking a constant $\lambda=(\lambda_1,0)^\tr\in\mb{R}^2$, $\lambda_1>0$, completes the description of the market. We may choose $A_t\equiv t$ so that the process $B$ becomes
\begin{equation*}
B\equiv\left[\begin{array}{cc} 
1 & 0 \\
0& 0 \end{array}
\right]\!.
\end{equation*}
The sequence of (deterministic) constraint sets is defined by setting
\begin{equation*}
\U^n:=\{(x,y)\in\mb{R}^2 \,|\, y = nx,\, x\geq0\},\,\U:=\{(x,y)\in\mb{R}^2 \,|\, x = 0,\, y\geq0\}.
\end{equation*}
One can see that these cones are polyhedral and that we have $\{(0,0)\}=B\U\neq\Lim_{n\to+\infty}B\U^n=\mathbb{R}_+\times\{0\}$. Note though that we do have $\U=\Lim_{n\to+\infty}\U^n=\{0\}\times\mathbb{R}_+$. From this description we immediately have that in the limiting case the agent is only allowed to invest in stocks that do not yield any extra profit when compared to the bond while for $n\geq1$ they can choose an optimal strategy $\hat{\nu}^n$ and it does not matter that $\hat{\nu}^n_2=n\hat{\nu}^n_1$ may become arbitrarily large since it can be offset by a position in the bond, whose evolution is the same as that of $S^2$. Indeed, the optimal position in the first stock is $\hat{\nu}^n_1=\lambda_1/(1-p)$ which clearly does not converge to $0$, which is the only possible position in the first stock in the limiting case. The optimal wealth for $n\geq1$ is given by \[\hat{X}^n_t=x\,\exp\Biggl(\frac{\lambda_1}{1-p}\,W_t+\frac{\lambda_1^2(1-2p)}{2(1-p)^2}\,t\Biggr),\] which does not equal $\hat{X}\equiv x$, the optimal wealth process for the constraint set $\U$. Correspondingly, the value functions $u^n$ do not converge to $u$, since for $x>0$
\[u^n(x)=\frac1p\,x^p\exp\Biggl(\frac{p\lambda_1^2T}{2(1-p)}\Biggr)\quad\text{ and }\quad u(x)=\frac1p\,x^p.\]
\end{rmk}

\begin{rmk}
The reader may ask whether it is necessary to vary $\lambda$ \emph{and} $\mb{P}$ or whether by a sensible choice of the Girsanov transform this can be reduced to simply varying $\mb{P}$. In certain cases this is indeed the case, typically when $M=W$ is a Brownian motion. However in general not so as the following example illustrates. Set $M:=W\cdot W$ for a one dimensional Brownian motion. Thus the asset has dynamics 
\[dS_t=S_t(W_t\,dW_t+\lambda_t W_t^2\,dt)\quad\text{ under }\mb{P}.\] 
If $\lambda$ is allowed to vary, say to $\ti{\lambda}$, all models can be achieved such that 
\[dS_t=S_t(W_t\,dW_t+\ti{\lambda}_t W_t^2\,dt)\quad\text{ under }\mb{P}.\] 
However, if only $\mb{P}$ can be varied, we have $d\ti{\mb{P}}/d\mb{P}:=\mc{E}(-\beta\cdot W)$ and the process $S$ has dynamics 
\[dS_t=S_t(W_t\,d\ti{W}_t+(\lambda_t-\beta_t) W_t^2\,dt)\quad\text{ under }\ti{\mb{P}},\] 
where $\ti{W}$ is a $\ti{\mb{P}}$-Brownian motion. In particular we will find it impossible to recreate the first dynamics as $W$ is not a Brownian motion under $\ti{\mb{P}}$.
\end{rmk}


\section{The Dual Domain in the Presence of Cone Constraints}
\label{sec_dualdom}
This section is devoted to a proof of Theorem \ref{thm_exist}, a full description of the dual domain. 
We note that Assumption \ref{ass_contfilt} is not required for any of the results in this section to hold, in particular dual elements need not be continuous, however the polyhedral nature of the cone cannot be dropped.
\begin{prop}\label{PropLZ07}
Let Assumption \ref{asscones} hold and $Y\in\mc{Y}(1)$ with $Y_T>0$. Then there exist:
\begin{enumerate}
\item A predictable $M$-integrable process $\kappa^Y$ with $B(\lambda-\kappa^{Y})\in (B\mc{K})^\circ$, $\mu^A$-a.e.
\item A local martingale $N^{Y}$ orthogonal to $M$.
\item A predictable
decreasing c{\`a}dl{\`a}g process $D^Y$ with $D^Y_0=1$ and $D^Y_T>0$ $\mb{P}$-a.s.
such that with the above
\begin{equation*}
Y=D^Y\,\mc{E}(-\kappa^Y\cdot M+N^Y).
\end{equation*}
\end{enumerate}
\end{prop}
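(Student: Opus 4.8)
The plan is to build the multiplicative representation of $Y$ from two classical ingredients --- the multiplicative decomposition of a strictly positive supermartingale and the Galtchouk--Kunita--Watanabe (GKW) decomposition of a local martingale with respect to the continuous local martingale $M$ --- and then to read off the polar-cone condition on $\kappa^Y$ by testing the supermartingale property of $XY$ against suitable multiples of the finitely many generators $K^1,\dots,K^m$ of $\mc{K}$.

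\emph{Step 1: the representation.} Since $0\in\U$, the strategy $\nu\equiv0$ is admissible, so $1=X^{1,0}\in\mc{X}(1)$ and hence $Y=Y\cdot1$ is a nonnegative c\`adl\`ag supermartingale with $Y_0=1$. A nonnegative supermartingale is absorbed at $0$, so the hypothesis $Y_T>0$ forces $Y>0$ and $Y_->0$ on $[0,T]$ $\mb{P}$-a.s. Consequently $Y$ admits a multiplicative decomposition $Y=L^Y D^Y$, where $L^Y$ is a strictly positive local martingale with $L^Y_0=1$ and $D^Y$ is a predictable decreasing c\`adl\`ag process with $D^Y_0=1$; moreover $D^Y_T=Y_T/L^Y_T>0$ $\mb{P}$-a.s. since $L^Y_T<+\infty$ $\mb{P}$-a.s., and then $D^Y_t\geq D^Y_T>0$ for all $t$. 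Writing $L^Y=\mc{E}(\mathcal{M}^Y)$ for a local martingale $\mathcal{M}^Y$ with $\mathcal{M}^Y_0=0$ and applying the GKW decomposition of $\mathcal{M}^Y$ with respect to $M$ gives $\mathcal{M}^Y=-\kappa^Y\cdot M+N^Y$, with $\kappa^Y$ predictable and $M$-integrable and $N^Y$ a local martingale orthogonal to $M$ --- here the continuity of $M$ is used. This already produces $Y=D^Y\mc{E}(-\kappa^Y\cdot M+N^Y)$, i.e. all of (i)--(iii) except the polar-cone property of $\kappa^Y$.

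\emph{Step 2: the polar-cone property.} Fix $j\in\{1,\dots,m\}$ and $c>0$. Since $\mc{K}$ is a cone and each $K^j$ is predictable and $M$-integrable, $\nu:=cK^j\in\mc{A}_\U$, so $X:=X^{1,cK^j}\in\mc{X}(1)$ and $XY$ is a strictly positive supermartingale. Using the Dol\'eans product formula and the continuity of $M$ --- whereby all brackets of $M$ with $N^Y$ and with the finite-variation drift terms vanish --- one computes
\begin{equation*}
XY=D^Y\exp\!\left(c\,\Phi^j\right)\mc{E}\!\left((cK^j-\kappa^Y)\cdot M+N^Y\right),\qquad \Phi^j:=\big(BK^j\big)^{\!\tr}B\big(\lambda-\kappa^Y\big)\cdot A,
\end{equation*}
where $\Phi^j$ is a continuous finite-variation process, its integrability following from the Kunita--Watanabe inequality for $C=B^\tr B$ together with the $M$-integrability of $K^j$, $\lambda$ and $\kappa^Y$. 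The last factor is a strictly positive local martingale and $D^Y\exp(c\Phi^j)$ is predictable, of finite variation and equal to $1$ at time $0$; hence this is a multiplicative decomposition of the supermartingale $XY$, and therefore its predictable part $D^Y\exp(c\Phi^j)$ must be decreasing. Comparing values at times $s\leq t$ yields $\Phi^j_t-\Phi^j_s\leq c^{-1}\log\!\big(D^Y_s/D^Y_t\big)$, and since $D^Y>0$ we may let $c\to+\infty$ --- legitimate because $cK^j\in\mc{A}_\U$ for \emph{every} $c>0$ --- to deduce that $\Phi^j$ is non-increasing, i.e. $\big(BK^j\big)^{\tr}B(\lambda-\kappa^Y)\leq0$ $\mu^A$-a.e. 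Intersecting these finitely many full-measure sets over $j=1,\dots,m$, and recalling that, $B\mc{K}$ being the cone generated by $BK^1,\dots,BK^m$, one has $(B\mc{K})^\circ=\bigcap_{j=1}^m\{\,l\in\mb{R}^d:(BK^j)^\tr l\leq0\,\}$, we conclude $B(\lambda-\kappa^Y)\in(B\mc{K})^\circ$ $\mu^A$-a.e.

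\emph{Main obstacle.} The delicate point is Step 2: passing from the \emph{global} supermartingale inequalities for the family $\{XY\}$ to a \emph{pointwise} ($\mu^A$-a.e.) sign condition on the drift. The conic structure is exactly what makes this work, since it allows the scaling $K^j\rightsquigarrow cK^j$ with $c\to+\infty$, while polyhedrality reduces the test to the finitely many generators so that no measurable selection is needed --- this is also why the polyhedral hypothesis cannot be dropped. Secondary technical care is needed for the existence and uniqueness of the multiplicative and GKW decompositions in the possibly discontinuous setting (note that $Y$ and $N^Y$ may have jumps, whereas $M$ is continuous) and for the integrability that makes $\Phi^j$ of finite variation.
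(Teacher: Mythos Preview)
Your proof is correct and follows essentially the same route as the paper: multiplicative (It\^o--Watanabe) decomposition of the positive supermartingale $Y$, GKW decomposition of the stochastic logarithm with respect to the continuous $M$, then testing the supermartingale property of $X^\nu Y$ against the finitely many generators $K^j$ and using the cone's scale invariance to isolate the $\mu^A$-a.e.\ sign condition.

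The one place where your argument is packaged differently from the paper is the extraction of the drift inequality in Step~2. The paper first decomposes $F:=\log D^Y$ into an $A$-absolutely continuous part $-\eta\cdot A$ and an $A$-singular part (via \cite{DS95}~Theorem~2.1), applies It\^o's formula to $X^\nu Y$, and reads off $\nu^\tr C(\lambda-\kappa)\leq\eta$ $\mu^A$-a.e.\ before invoking the cone property to drop $\eta$. You instead observe that $D^Y\exp(c\Phi^j)$ is the predictable finite-variation factor in a multiplicative decomposition of the supermartingale $X^{1,cK^j}Y$ and therefore must be decreasing (this step is justified exactly by the Yoeurp argument the paper uses), and then let $c\to\infty$. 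Both routes rely on the same ingredients --- Yoeurp's lemma for the local-martingale part, uniqueness of the canonical decomposition, scaling by $c$, and polyhedrality to reduce to finitely many tests --- but your version bypasses the auxiliary Lebesgue-type decomposition of $\log D^Y$, which is a mild simplification.
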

\begin{proof}
Since $0\in\U$ $\mu^A$-a.e. we may proceed as in \cite{LZ07} Proposition 3.2. to deduce that a given $Y\in\mc{Y}(1)$ with $Y_T>0$ admits a multiplicative decomposition
which we can write as 
\begin{equation*}
Y=D^Y\,\mc{E}(-\kappa^Y\cdot M+N^Y),
\end{equation*}
where $D^Y$ is a positive, predictable, nonincreasing process with $D^Y_0=1$, $\kappa^Y$ is an $M$-integrable process and $N^Y$ a local martingale orthogonal to $M$. 
It thus remains to show that $B(\lambda-\kappa^Y)\in (B\mc{K})^\circ$ $\mu^A$-a.e. and we drop the superscripts in the remainder of the proof to ease the exposition. 

Set $F:=\log(D)$. By \cite{DS95} Theorem 2.1 there exists a predictable $\mu^A$-null set $E$ together with a nonnegative predictable process $\eta$ such that
\[F_t=-\int_0^t\eta_s\,dA_s+\int_0^t\I{E}(s)\,dF_s=:-\int_0^t\eta_s\,dA_s+F'_t.\]
From It\^{o}'s formula we derive
that for any admissible investment strategy $\nu$ with corresponding wealth process $X^\nu\in\mc{X}(1)$ we have
\begin{multline*}
d(X_t^\nu Y_t)=X^\nu_{t}Y_{t-}\bigg(\big(\nu_t-\kappa_t\big)^{\tr}dM_t+dN_t+d[F',N]_t+\Big(\nu^\tr_tB_t^\tr B_t(\lambda_t-\kappa_t)-\eta_t\Big)\,dA_t+dF'_t\bigg).
\end{multline*}
Observe that by Yoeurp's lemma $(\nu-\kappa\big)^{\tr}dM+dN+d[F',N]$ is the differential of a local martingale, $M$ being continuous. Since the product $X^\nu Y$ is a supermartingale, we hence must have that the differential 
\[ \Big(\nu^\tr B^\tr B(\lambda-\kappa)-\eta\Big)dA+dF'\] 
generates a nonpositive measure on the predictable $\sigma$-algebra $\mc{P}$. Since $\mu^A(E)=0$ we conclude, using the cone property of $\mc{K}$, that the following inequality must hold
\begin{equation}
(B\nu)^\tr B(\lambda-\kappa)=\nu^\tr B^\tr B(\lambda-\kappa)\leq0\label{nulama0}
\end{equation}
$\mu^A$-a.e. for each $\nu\in\mc{A}_\U$. To conclude we have to show that arbitrary elements of $B\U$ can be realized as trading strategies, $\mu^A$-a.e. This is where the assumption that the constraints be polyhedral is needed.

Choosing $\nu = K^{1},\ldots,K^{m}$ it now follows that there exists a single $\mu^A$-null set (also denoted $E$)
such that for all $(t,\omega)\in E^c$ and all $j\in\{1,\ldots,m\}$
\begin{equation*}
\big(B_t(\omega)K^j_t(\omega)\big)^\tr B_t(\omega)\big(\lambda_t(\omega)-\kappa_t(\omega))\leq0.
\end{equation*}
In particular we have $B(\lambda-\kappa)\in (B\mc{K})^\circ$, $\mu^A$-a.e. as for
fixed $(t,\omega)$ any $k\in B_t(\omega)\U(t,\omega)$ may be written $\mu^A$-a.e. as
\begin{equation*}
k=\sum_{j=1}^m c_j B_t(\omega)K_t^j(\omega)
\end{equation*}
with some $c_j\geq0$ for $j\in\{1,\ldots,m\}$.
\end{proof}
\begin{rmk}
\label{rmk_K=Rd}
Suppose that $\mc{K}\equiv\mb{R}^d$ then in \eqref{nulama0}, given a $Y$ and corresponding $\kappa^Y$, we can directly insert $\nu=\lambda-\kappa^Y$. Integrating the resulting expression over $[0,T]$ with respect to $\mu^A$ we derive that
the stochastic integrals $\lambda\cdot M$ and $\kappa^{Y}\cdot M$ are indistinguishable and thus we recover the multidimensional version of \cite{LZ07} Proposition 3.2.
\end{rmk} 
\begin{cor}
\label{cor_kappa}
There exist a process $\hat{\kappa}$ and a local martingale $\hat{N}$ orthogonal to $M$, such that 
$\hat{Y}^{1}=\mc{E}\big(-\hat{\kappa}\cdot M+\hat{N}\big)$ for the dual optimizer $\hat{Y}^{1}$ where $y=1$ and
$B(\lambda-\hat{\kappa})\in(B\mc{K})^\circ$ $\mu^A$-a.e. If $\hat{Y}^{y}$ denotes the dual optimizer for $y>0$ we  have that $\hat{Y}^{y}=y\hat{Y}^{1}=y\,\mc{E}\big(-\hat{\kappa}\cdot M+\hat{N}\big)$.
\end{cor}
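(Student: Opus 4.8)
The plan is to apply Proposition \ref{PropLZ07} to the dual optimizer and to show that the nonincreasing factor it produces must be identically one; the scaling relation $\hat{Y}^{y}=y\hat{Y}^{1}$ then follows from the homogeneity of the dual problem.

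We first record that Proposition \ref{PropLZ07} applies to $\hat{Y}:=\hat{Y}^{1}$. The dual optimizer exists by Theorem \ref{thm_WZ09}(ii); moreover $\hat{X}_T>0$ as the terminal value of a strictly positive wealth process, and $U'>0$ for power utility, so Theorem \ref{thm_WZ09}(iii) with $y=u'(x)$ gives $\hat{Y}^{u'(x)}_T=U'(\hat{X}_T)>0$. Combined with the elementary identity $\hat{Y}^{y}=y\hat{Y}^{1}$, which follows because $Y\mapsto yY$ is a bijection $\mc{Y}(1)\to\mc{Y}(y)$ with $\ti{U}(yz)=y^q\ti{U}(z)$ and $y^q>0$, this yields $\hat{Y}^{1}_T>0$ $\mb{P}$-a.s. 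Proposition \ref{PropLZ07} therefore provides an $M$-integrable $\hat{\kappa}:=\kappa^{\hat{Y}}$ with $B(\lambda-\hat{\kappa})\in(B\mc{K})^\circ$ $\mu^A$-a.e., a local martingale $\hat{N}:=N^{\hat{Y}}$ orthogonal to $M$, and a predictable nonincreasing c\`{a}dl\`{a}g process $D:=D^{\hat{Y}}$ with $D_0=1$ and $D_T>0$ $\mb{P}$-a.s., such that $\hat{Y}^{1}=D\,\mc{E}(-\hat{\kappa}\cdot M+\hat{N})$.

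The key step is to show that $\bar{Y}:=\mc{E}(-\hat{\kappa}\cdot M+\hat{N})$, the dual element with its nonincreasing part removed, still belongs to $\mc{Y}(1)$. Repeating the It\^{o} computation from the proof of Proposition \ref{PropLZ07} with $\eta\equiv0$ and $F'\equiv0$, for any $\nu\in\mc{A}_\U$ with wealth process $X^\nu\in\mc{X}(1)$ one finds
\begin{equation*}
d\big(X_t^\nu\bar{Y}_t\big)=X_t^\nu\bar{Y}_{t-}\Big(\big(\nu_t-\hat{\kappa}_t\big)^{\tr}dM_t+d\hat{N}_t+\nu_t^\tr B_t^\tr B_t\big(\lambda_t-\hat{\kappa}_t\big)\,dA_t\Big).
\end{equation*}
Since $\nu\in\mc{K}$ and $B(\lambda-\hat{\kappa})\in(B\mc{K})^\circ$ $\mu^A$-a.e., the drift density $\nu^\tr B^\tr B(\lambda-\hat{\kappa})=(B\nu)^\tr B(\lambda-\hat{\kappa})$ is $\leq0$ $\mu^A$-a.e., so $X^\nu\bar{Y}$ is a nonnegative local martingale minus a nondecreasing process and hence a supermartingale. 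As $\bar{Y}_0=1$, this shows $\bar{Y}\in\mc{Y}(1)$.

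It remains to conclude by optimality. Since $D$ is nonincreasing with $D_0=1$ and $D_T>0$ we have $\bar{Y}_T=\hat{Y}^{1}_T/D_T\geq\hat{Y}^{1}_T$, and as $\ti{U}(y)=-y^q/q$ is nonincreasing on $(0,\infty)$ this gives $\E[\ti{U}(\bar{Y}_T)]\leq\E[\ti{U}(\hat{Y}^{1}_T)]=\ti{u}(1)$. On the other hand $\bar{Y}\in\mc{Y}(1)$ forces $\E[\ti{U}(\bar{Y}_T)]\geq\ti{u}(1)$ by definition of $\ti{u}$. Hence $\bar{Y}$ is also a dual optimizer, and uniqueness in Theorem \ref{thm_WZ09}(ii) yields that $\bar{Y}$ and $\hat{Y}^{1}$ are indistinguishable, i.e.\ $D\equiv1$. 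This gives $\hat{Y}^{1}=\mc{E}(-\hat{\kappa}\cdot M+\hat{N})$ with $B(\lambda-\hat{\kappa})\in(B\mc{K})^\circ$ $\mu^A$-a.e., and the representation of $\hat{Y}^{y}$ follows from $\hat{Y}^{y}=y\hat{Y}^{1}$. The main obstacle is the intermediate claim $\bar{Y}\in\mc{Y}(1)$: it hinges precisely on the polar-cone property $B(\lambda-\hat{\kappa})\in(B\mc{K})^\circ$, available because $\mc{K}$ is polyhedral, together with the fact that a nonnegative local supermartingale is a genuine supermartingale.
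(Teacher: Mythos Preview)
Your proof is correct and follows the same approach as the paper. The paper's own proof is terse: it notes that $\hat{Y}^1_T>0$ (deferred to Appendix \ref{existappend}), invokes Proposition \ref{PropLZ07}, and then refers to \cite{LZ07} Corollary 3.3 for the argument that $D^{\hat{Y}}\equiv1$; you have simply written out that referenced step in full, namely showing $\bar{Y}=\mc{E}(-\hat{\kappa}\cdot M+\hat{N})\in\mc{Y}(1)$ via the polar-cone drift inequality and then using optimality plus uniqueness of the dual minimizer.
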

\begin{proof} In view of Proposition \ref{PropLZ07} the key is to show that $\hat{Y}^1_T>0$ which is proved in Appendix \ref{existappend}. We may then 
proceed as in the proof of \cite{LZ07} Corollary 3.3. The independence of $y$ follows from the factorization property.
\end{proof}
\begin{cor}
\label{cor_optkapandnu}
The optimal portfolio $\hat{\nu}$ satisfies $\mu^A$-a.e. for all admissible strategies $\nu$,
\begin{equation*}
\hat{\nu}^\tr B^\tr B\big(\lambda -\hat{\kappa}\big)=0 \quad \text{ and }\quad (\nu-\hat{\nu})^\tr B^\tr B(\lambda-\hat{\kappa})\leq 0.
\end{equation*}
\end{cor}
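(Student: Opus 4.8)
The idea is to extract the identity $\hat\nu^\tr B^\tr B(\lambda-\hat\kappa)=0$ from the martingale property of $\hat X\hat Y$ (Theorem~\ref{thm_WZ09}(iii)) and then obtain the inequality for free from the polar-cone membership $B(\lambda-\hat\kappa)\in(B\mc{K})^\circ$ of Proposition~\ref{PropLZ07}. Fix $x>0$ and set $y:=u'(x)$; then $\hat X\hat Y^{y}$ is a strictly positive martingale on $[0,T]$, where $\hat X=X^{x,\hat\nu}=x\,\mc{E}(\hat\nu\cdot M+\hat\nu\cdot\lo M,M\ro\lambda)$ and, by Corollary~\ref{cor_kappa}, $\hat Y^{y}=y\,\mc{E}(-\hat\kappa\cdot M+\hat N)$ with $\hat N$ orthogonal to $M$ and $B(\lambda-\hat\kappa)\in(B\mc{K})^\circ$ $\mu^A$-a.e. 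Since neither $\hat\nu$ nor $(\hat\kappa,\hat N)$ depends on the choice of $x$ (hence of $y$), the resulting identity is parameter-free.

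First I would write $\hat X\hat Y^{y}$ as a single stochastic exponential. Factoring each term as $\mc{E}(\text{loc.\ mart.})\cdot\exp(\text{cont.\ FV})$ and applying the formula for the product of two stochastic exponentials, using that $\hat\nu\cdot M$ and $M$ are continuous (so that $[\hat\nu\cdot M,\hat N]\equiv0$ even though $\hat N$ need not be continuous here), one gets
\begin{equation*}
\hat X\,\hat Y^{y}=xy\,G\,R,\qquad
G:=\mc{E}\bigl((\hat\nu-\hat\kappa)\cdot M+\hat N\bigr),\qquad
R_t:=\exp\!\Bigl(\textstyle\int_0^t\hat\nu_s^\tr B_s^\tr B_s(\lambda_s-\hat\kappa_s)\,dA_s\Bigr).
\end{equation*}
Here $G$ is a strictly positive local martingale, hence a supermartingale with $G_0=1$ and $G_T>0$ a.s., and $R$ is a continuous process with $R_0=1$ which is \emph{nonincreasing} because its integrand is $\le0$ $\mu^A$-a.e.\ by $B(\lambda-\hat\kappa)\in(B\mc{K})^\circ$ applied to $\hat\nu\in\mc{A}_\U$ (note $\int_0^T|\hat\nu_s^\tr B_s^\tr B_s(\lambda_s-\hat\kappa_s)|\,dA_s<\infty$ by Cauchy--Schwarz and $M$-integrability of $\hat\nu,\lambda,\hat\kappa$, so $R$ is well defined and strictly positive).

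The crux is an elementary comparison. Since $xy\,GR=\hat X\hat Y^{y}$ is a martingale, $\E[G_TR_T]=G_0R_0=1$; on the other hand $0\le G_TR_T\le G_T$ (as $0\le R_T\le1$) and $\E[G_T]\le G_0=1$ by the supermartingale property, so $1=\E[G_TR_T]\le\E[G_T]\le1$, forcing $\E[G_T(1-R_T)]=0$. As $G_T>0$ and $1-R_T\ge0$ a.s., this gives $R_T=1$ $\mb{P}$-a.s., and since $R$ is continuous, nonincreasing and $R_0=1$ it follows that $R\equiv1$, i.e.\ $\int_0^T\hat\nu_s^\tr B_s^\tr B_s(\lambda_s-\hat\kappa_s)\,dA_s=0$ $\mb{P}$-a.s. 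Taking expectations and using that the integrand is $\le0$ $\mu^A$-a.e.\ yields $\hat\nu^\tr B^\tr B(\lambda-\hat\kappa)=0$ $\mu^A$-a.e. For the second assertion, any $\nu\in\mc{A}_\U$ satisfies $\nu^\tr B^\tr B(\lambda-\hat\kappa)\le0$ $\mu^A$-a.e.\ (again by $B(\lambda-\hat\kappa)\in(B\mc{K})^\circ$), so subtracting the identity just proved gives $(\nu-\hat\nu)^\tr B^\tr B(\lambda-\hat\kappa)\le0$ $\mu^A$-a.e.

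The only mild obstacle I anticipate is the bookkeeping in the stochastic-exponential computation --- isolating the finite-variation drift that becomes $R$ and verifying that the contributions of $\hat N$ to the cross-variation vanish --- together with confirming that the normalization $y=u'(x)$ needed to invoke Theorem~\ref{thm_WZ09}(iii) does not enter the final statement, which it does not since $\hat\nu$ and $(\hat\kappa,\hat N)$ are independent of it. Everything else is routine.
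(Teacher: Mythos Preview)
Your proof is correct and follows essentially the same route as the paper: both arguments exploit that $\hat X\hat Y^{y}$ is a true martingale (Theorem~\ref{thm_WZ09}(iii)) to force the drift $\hat\nu^\tr B^\tr B(\lambda-\hat\kappa)\,dA$ to vanish, and then derive the inequality from $B(\lambda-\hat\kappa)\in(B\mc{K})^\circ$ together with \eqref{nulama0}. The only difference is in how the drift is killed: the paper writes out $d(\hat X_t\hat Y^{y}_t)$ and observes that a martingale has zero predictable finite-variation part in its canonical decomposition, whereas you pass through the multiplicative decomposition $xy\,G\,R$ and the expectation sandwich $1=\E[G_TR_T]\le\E[G_T]\le1$, using the sign of the drift (hence the polar-cone information) already at this stage. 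The paper's version is a line shorter; yours is a touch more self-contained since it does not invoke uniqueness of the special semimartingale decomposition.
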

\begin{proof}
Due to factorization we may suppose that $x=1$. Then for the optimizers we know from Theorem \ref{thm_WZ09} (iii) that
the process $\hat{X}\hat{Y}^{y}$ is a martingale when $y=u'(1)$. We derive
\begin{equation*}
d(\hat{X}_t\hat{Y}^{y}_t)=\hat{X}_{t}\hat{Y}^{y}_{t-}\Big(\big(\hat{\nu}_t-\hat{\kappa}_t\big)^{\tr}dM_t+d\hat{N}_t+\Big(\hat{\nu}^\tr_tB_t^\tr B_t(\lambda_t-\hat{\kappa}_t)\Big)dA_t\Big).
\end{equation*}
Thanks to Assumption \ref{asscones} it must hold that $\hat{\nu}^\tr B^\tr B\big(\lambda -\hat{\kappa}\big)=0$ for all $\nu\in\mc{A}_\U,$ $\mu^A$-a.e. The second statement of the corollary now follows upon addition of \eqref{nulama0}.
\end{proof}


\section{Relationship with Quadratic Semimartingale BSDEs}
\label{sec_linkBSDEs}
Having established a representation for elements of the dual domain, in this section we use this to connect the optimizers $(\hat{X},\hat{Y})$ with the solution triple of a specific BSDEs proving Proposition \ref{thm_bijection}. As noted before, admitting Theorem \ref{thm_exist}, one may find some of the results in \cite{Nu209} Corollaries 3.12 and 5.18, however we provide here a complete proof as it illustrates the interplay between $\hat{\kappa}$ and $\hat{\nu}$. Moreover, the verification argument is via uniqueness of BSDEs building on the following lemma whose proof we delegate to the appendix.
\begin{lem}
In the setting of Theorem \ref{thm_WZ09} let $\hat{\Psi}:=\log\Bigl(\frac{u'(x)\hat{Y}^1}{U'(\hat{X})}\Bigr)$. Then $\hat{\Psi}\in\mf{E}$.
\end{lem}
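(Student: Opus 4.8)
The statement to prove is that $\hat{\Psi}:=\log\bigl(u'(x)\hat{Y}^1/U'(\hat{X})\bigr)$ lies in $\mf{E}$, i.e. that $\hat{\Psi}^*$ has exponential moments of all orders. The plan is to expand $\hat{\Psi}$ explicitly using the representations already established: by Theorem \ref{thm_exist}(ii) we have $\hat{Y}^1=\mc{E}(-\hat{\kappa}\cdot M+\hat{N})$, and by \eqref{wealth} $\hat{X}=x\,\mc{E}(\hat{\nu}\cdot M+\hat{\nu}\cdot\lo M,M\ro\lambda)$, while $U'(\xi)=\xi^{p-1}$. Taking logarithms and using the formula $\log\mc{E}(\Upsilon)=\Upsilon-\tfrac12\lo\Upsilon^c,\Upsilon^c\ro$ for a continuous semimartingale, $\hat{\Psi}$ becomes, up to the constant $\log u'(x)-(p-1)\log x$, an explicit combination of the stochastic integrals $\hat{\kappa}\cdot M$, $\hat{\nu}\cdot M$, the martingale $\hat{N}$, the finite variation terms $\lo\hat{\kappa}\cdot M,\hat{\kappa}\cdot M\ro$, $\lo\hat{N},\hat{N}\ro$, $\lo\hat{\nu}\cdot M,\hat{\nu}\cdot M\ro$ and $\hat{\nu}\cdot\lo M,M\ro\lambda$. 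So it suffices to bound $\hat{\Psi}^*$ by (a constant times) a sum of running suprema of these pieces and show each has exponential moments of all orders.

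The key analytic input will be the pointwise inequalities coming from Corollary \ref{cor_optkapandnu} and Proposition \ref{PropLZ07}, together with Assumption \ref{ass_expmom}. First, $\lo\lambda\cdot M,\lambda\cdot M\ro_T$ has exponential moments of all orders by hypothesis, so $\hat{\nu}\cdot\lo M,M\ro\lambda$ is controlled by Cauchy--Schwarz once $\lo\hat\nu\cdot M,\hat\nu\cdot M\ro_T$ is controlled. To control the latter I would use the optimality relation: since $B(\lambda-\hat\kappa)\in(B\mc K)^\circ$ and $\hat\nu^\tr B^\tr B(\lambda-\hat\kappa)=0$ $\mu^A$-a.e., one gets a bound of the type $\|B\hat\nu\|\lesssim\|B\lambda\|+\|B\hat\kappa\|$ pointwise from the polyhedral structure (the extremal generators $K^j$); and $\lo\hat\kappa\cdot M,\hat\kappa\cdot M\ro_T$ should itself be controlled by $\lo\lambda\cdot M,\lambda\cdot M\ro_T$ again via the polar-cone relation, plus possibly $\lo\hat N,\hat N\ro_T$. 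The supremum of the continuous local martingale $\hat\kappa\cdot M+\hat N$ is then controlled through its bracket by the Dambis--Dubins--Schwarz time change / Burkholder--Davis--Gundy together with the Gaussian tail of a time-changed Brownian motion — this is the standard device (used e.g. in Appendix \ref{appendBSDE}) by which an exponential moment on the bracket at time $T$ yields an exponential moment on the running supremum of the martingale. The same applies to $\hat\nu\cdot M$ and $\hat N$ individually.

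The main obstacle I anticipate is obtaining the quantitative pointwise control of $\hat\kappa$ (equivalently $B\hat\kappa$) and $\hat\nu$ (equivalently $B\hat\nu$) purely in terms of $B\lambda$, with constants depending only on the (fixed) generators $K^j$ of the polyhedral cone and not on $(t,\omega)$. This is where the polyhedral Assumption \ref{asscones} is essential: it gives that $(B\mc K)^\circ$ is a finitely generated polyhedral cone and that membership plus the complementarity condition $\hat\nu^\tr B^\tr B(\lambda-\hat\kappa)=0$ forces $B\hat\nu$ and $B\hat\kappa$ to lie in bounded-aspect-ratio relationships with $B\lambda$. One has to be slightly careful because the decomposition of the dual element also carries the decreasing process $D^{\hat Y}$, but Corollary \ref{cor_kappa} tells us $D^{\hat Y}\equiv1$, so that term drops out. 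Once these pointwise estimates are in place, everything else is routine: combine them, take suprema, apply BDG and the Gaussian-tail estimate for time-changed Brownian motion, and invoke Assumption \ref{ass_expmom}. I would also remark that since $\hat\Psi_T=\log(u'(x))-(p-1)\log x-$ (constants from the factorization), the terminal value is deterministic, consistent with the BSDE having $\Psi_T=0$ after the normalization in Proposition \ref{thm_bijection}.
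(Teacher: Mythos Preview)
Your approach has a genuine gap at its core: the pointwise estimates you hope to extract do not follow from the available structure. You want constants, independent of $(t,\omega)$, such that $\|B\hat\nu\|\lesssim\|B\lambda\|+\|B\hat\kappa\|$ and $\|B\hat\kappa\|\lesssim\|B\lambda\|$, invoking the polyhedral Assumption \ref{asscones}. But the generators $K^j$ of the cone are \emph{predictable processes}, not fixed vectors, so there is no aspect-ratio constant uniform in $(t,\omega)$. More seriously, the polar-cone relation $B(\lambda-\hat\kappa)\in(B\mc K)^\circ$ only confines $B(\lambda-\hat\kappa)$ to a cone; it does not bound its norm (take $\mc K=\{0\}$, where $(B\mc K)^\circ=\mb R^d$ and the relation is vacuous). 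Finally, you offer no mechanism at all to control $\lo\hat N,\hat N\ro_T$, which is orthogonal to $M$ and has no direct link to $\lambda$. The logic is in fact circular: in the BSDE theory of Appendix \ref{appendBSDE} the bounds on $\hat Z\cdot M$ and $\hat N$ are \emph{consequences} of $\hat\Psi\in\mf E$ (Theorem \ref{thmMW10}(i)), not ingredients for proving it.

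The paper proceeds by a completely different route that avoids ever estimating $\hat\nu$, $\hat\kappa$ or $\hat N$ individually. It identifies $\exp(\hat\Psi)$ with the \emph{opportunity process} $L^{\textit{op}}$, characterized by
\[
L^{\textit{op}}_t\,U(X^\nu_t)=\esssup_{\check\nu\in\mc A_{\U,\nu}}\E\big[U(X^{\check\nu}_T)\,\big|\,\mc F_t\big],
\]
and, for $p<0$, with the dual opportunity process $\ti L^{\textit{op}}=(L^{\textit{op}})^{1/(1-p)}$ via an analogous essential infimum over $\mc Y^*(Y,t)$. For $p\in(0,1)$ one plugs in $\nu\equiv0$ to get $L^{\textit{op}}\ge1$, and then bounds $(L^{\textit{op}}_t)^\delta$ from above by inserting the \emph{minimal martingale measure} density $Y^\lambda=\mc E(-\lambda\cdot M)$ into the definition of $\ti U$, using the supermartingale property of $Y^\lambda X^{\check\nu}$ and H\"older's inequality. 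This yields a bound by a conditional expectation of $\exp(k_q\lo\lambda\cdot M,\lambda\cdot M\ro_T)$, which is a martingale under Assumption \ref{ass_expmom}; Doob's maximal inequality then gives $\hat\Psi\in\mf E$. The case $p<0$ is symmetric, working with $\ti L^{\textit{op}}$. The point is that the variational characterization of $L^{\textit{op}}$ lets one compare against a \emph{single} explicit test object controlled entirely by $\lambda$, bypassing any need to estimate the optimizers themselves.
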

We then derive
\begin{prop}
\label{thm_dualBSDE}
Under Assumptions \ref{ass_contfilt}, \ref{ass_expmom} and \ref{asscones} let $\hat{\nu}$ denote the optimal strategy, $\hat{X}$ the optimal wealth process and $\hat{Y}^1$ the optimal dual minimizer with decomposition $\hat{Y}^{1}=\mc{E}\big(\!-\hat{\kappa}\cdot M+\hat{N}\big)$.
If we set $\hat{\Psi}:=\log\bigl({u'(x)\hat{Y}^1/U'(\hat{X})}\bigr)$ and $\hat{Z}:=-\hat{\kappa}+(1-p)\hat{\nu}$, then the triple $(\hat{\Psi},\hat{Z},\hat{N})$ 
is the unique solution to the BSDE \eqref{BSDE} with $\hat{\Psi}\in\mf{E}$ where
\begin{align*}
F(\cdot,z)=\frac{1}{2}\,\big\|Bz\big\|^2-\frac{q}{2}\,\big\|\Pi_{B\U}(B(z+\lambda))\big\|^2.
\end{align*}
\end{prop}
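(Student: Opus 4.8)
The plan is to verify that the triple $(\hat{\Psi},\hat{Z},\hat{N})$ solves the BSDE by applying It\^{o}'s formula directly to $\hat{\Psi}=\log(u'(x)\hat{Y}^1) - \log(U'(\hat{X}))$, and then to invoke a comparison/uniqueness result for quadratic semimartingale BSDEs (from Appendix \ref{appendBSDE}) to conclude it is the unique solution with $\hat{\Psi}\in\mf{E}$ (the membership $\hat{\Psi}\in\mf{E}$ is supplied by the preceding lemma). First I would write $U'(\hat{X})=\hat{X}^{p-1}$ and use \eqref{wealth} together with Corollary \ref{cor_kappa} to express both $\hat{X}$ and $\hat{Y}^1$ as stochastic exponentials: $\hat{X}=x\,\mc{E}(\hat{\nu}\cdot M+\hat{\nu}\cdot\lo M,M\ro\lambda)$ and $\hat{Y}^1=\mc{E}(-\hat{\kappa}\cdot M+\hat{N})$. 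Taking logarithms via the stochastic exponential formula, $\log\mc{E}(V)=V-\frac12\lo V^c,V^c\ro$, gives
\begin{align*}
\log\hat{Y}^1 &= -\hat{\kappa}\cdot M+\hat{N}-\tfrac12\,\hat{\kappa}^\tr C\hat{\kappa}\cdot A-\tfrac12\lo\hat{N},\hat{N}\ro,\\
\log U'(\hat{X}) &= (p-1)\Bigl(\hat{\nu}\cdot M+\hat{\nu}^\tr C\lambda\cdot A-\tfrac12\,\hat{\nu}^\tr C\hat{\nu}\cdot A\Bigr)+\text{const}.
\end{align*}
Subtracting, the martingale part of $\hat{\Psi}$ becomes $(-\hat{\kappa}+(1-p)\hat{\nu})\cdot M+\hat{N}=\hat{Z}\cdot M+\hat{N}$, which matches the required form; the orthogonal martingale part is exactly $\hat{N}$, as needed, and $\hat{N}$ is continuous by Assumption \ref{ass_contfilt}.

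The crux is then to check that the finite-variation (i.e.\ $dA$) part of $\hat{\Psi}$ equals $-F(\cdot,\hat{Z})\,dA-\tfrac12 d\lo\hat{N},\hat{N}\ro$ with $F$ as stated. Collecting the $dA_t$ terms from the computation above, the drift of $\hat\Psi$ is
\[
\Bigl(-\tfrac12\hat{\kappa}^\tr C\hat{\kappa}-(p-1)\hat{\nu}^\tr C\lambda+\tfrac{p-1}{2}\hat{\nu}^\tr C\hat{\nu}\Bigr)dA-\tfrac12 d\lo\hat N,\hat N\ro,
\]
so I must show the bracket equals $-F(\cdot,\hat Z)=-\tfrac12\|B\hat Z\|^2+\tfrac q2\|\Pi_{B\U}(B(\hat Z+\lambda))\|^2$. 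Here I would use $\|Bv\|^2=v^\tr C v$ and substitute $\hat Z=-\hat\kappa+(1-p)\hat\nu$; expanding $\|B\hat Z\|^2$ produces cross terms $\hat\kappa^\tr C\hat\nu$ which must be controlled using the orthogonality relation $\hat\nu^\tr C(\lambda-\hat\kappa)=0$ from Corollary \ref{cor_optkapandnu}. The remaining work is to identify $\tfrac q2\|\Pi_{B\U}(B(\hat Z+\lambda))\|^2$: since $B(\hat Z+\lambda)=B((1-p)\hat\nu+(\lambda-\hat\kappa))$ and, by Moreau's decomposition for the cone $B\U$ (with polar $(B\U)^\circ$), $B(\lambda-\hat\kappa)\in(B\U)^\circ$ while $(1-p)B\hat\nu\in B\U$ are orthogonal (again by Corollary \ref{cor_optkapandnu}), one gets $\Pi_{B\U}(B(\hat Z+\lambda))=(1-p)B\hat\nu$ when $p<1$, up to the sign/scaling bookkeeping in $q=p/(p-1)$. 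Plugging this in and simplifying with $q=p/(p-1)$ should make the algebraic identity close.

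Having established that $(\hat\Psi,\hat Z,\hat N)$ is a solution of \eqref{BSDE} lying in $\mf{E}$, the final step is uniqueness: I would invoke the comparison theorem for quadratic semimartingale BSDEs from Appendix \ref{appendBSDE}, checking that $F$ satisfies the requisite structural hypotheses (quadratic growth in $z$ of the form $|F(\cdot,z)|\le \tfrac{\text{const}}{2}\|Bz\|^2 + \text{lower order}$, together with the relevant local Lipschitz/monotonicity condition), which holds because $z\mapsto\|\Pi_{B\U}(B(z+\lambda))\|^2$ is Lipschitz-quadratic since projections onto convex cones are $1$-Lipschitz. I expect the main obstacle to be the bookkeeping in the drift identity, specifically handling the case split $p\in(0,1)$ versus $p<0$ (equivalently the sign of $q$ and $1-p$) so that the Moreau decomposition is applied to the correct cone and scaling, and carefully tracking where Corollary \ref{cor_optkapandnu} is used to kill cross terms; the existence-of-a-solution and uniqueness parts are comparatively routine given the cited appendix results.
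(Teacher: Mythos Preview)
Your proposal is correct and follows essentially the same approach as the paper: apply It\^{o}'s formula to $\hat\Psi$, use Corollary~\ref{cor_optkapandnu} together with Moreau's decomposition to identify $\Pi_{B\U}(B(\hat Z+\lambda))=(1-p)B\hat\nu$ and hence the driver, and conclude uniqueness via Theorem~\ref{thmMW10} (with Proposition~\ref{prop_Fnproperties} verifying Assumption~\ref{assBSDE}). The case split you anticipate is not needed, since $1-p>0$ for all $p\in(-\infty,0)\cup(0,1)$, so $(1-p)B\hat\nu\in B\U$ and the Moreau decomposition applies uniformly.
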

\begin{proof}
An application of It\^{o}'s formula to the process $\hat{\Psi}$ gives
\begin{align}
\label{eq_generator}
d\hat{\Psi}_t=\hat{Z}^{\tr}_t\,dM_t +d\hat{N}_t-\frac{1}{2}\,d\lo\hat{N},\hat{N}\ro_t
+\left[(1-p)\hat{\nu}_t^\tr B^\tr_t B_t\left(\lambda_t-\frac{\hat{\nu}_t}{2}\right)
-\frac{1}{2}\,\hat{\kappa}_t^\tr B_t^\tr B_t \hat{\kappa}_t\right]dA_t.
\end{align}
It remains to show that the generator in the previous equation corresponds to that given
in the statement of the theorem. Using the relation
\begin{equation*}
\hat{\nu}^\tr B^\tr B\lambda= \hat{\nu}^\tr B^\tr B\hat{\kappa}
\end{equation*}
implied by Corollary \ref{cor_optkapandnu} we end up with the following form
for the generator of \eqref{eq_generator},
\begin{align*}
\frac{1}{2}\|B\hat{Z}\|^2+
\frac{p(1-p)}{2}\left\|B\!\left(\frac{\hat{Z}+\lambda}{1-p}\right)\right\|^2
-\frac{p(1-p)}{2}\left\|B\!\left(\hat{\nu}-\frac{\hat{Z}+\lambda}{1-p}\right)\right\|^2.
\end{align*}
Now from the definition of $\hat{Z}$ together with Corollary \ref{cor_optkapandnu} the following equation holds $\mu^A$-a.e. for all admissible
$\nu$
\begin{equation*}
(\nu-\hat{\nu})^\tr B^\tr B\left[(1-p)\hat{\nu}-(\hat{Z}+\lambda)\right]\geq0.
\end{equation*}
This equation can be understood as the subgradient condition for the convex function
\begin{equation*}
\R^d\ni\eta\mapsto\frac{1-p}{2}\,\left\|B\!\left(\eta-\frac{\hat{Z}+\lambda}{1-p}\right)\right\|^2
\end{equation*}
to have a minimum over $\mc{K}$ at $\hat{\nu}$ holding $\mu^A$-a.e. In particular, $\mu^A\text{-a.e.}$
\[ \frac{1-p}{2}\,\left\|B\!\left(\hat{\nu}-\frac{\hat{Z}+\lambda}{1-p}\right)\right\|^2=\frac{1-p}{2}\,\inf_{\eta\in\mc{K}}\left\|B\!\left(\eta-\frac{\hat{Z}+\lambda}{1-p}\right)\right\|^2.\]
Since it coincides with the generator of \eqref{eq_generator} $\mu^A$-a.e. $F$ is hence of the claimed form, paying attention to the signs and using the Pythagorean rule (see Theorem \ref{thm_Moreau}).

As the filtration is continuous, $\hat{N}$ is continuous and we have constructed a solution $(\hat{\Psi},\hat{Z},\hat{N})$ to the BSDE \eqref{BSDE} with $\hat{\Psi}\in\mf{E}$. The claimed uniqueness follows then from Theorem \ref{thmMW10}
noting that Proposition \ref{prop_Fnproperties} implies the required Assumption \ref{assBSDE}.
\end{proof}

To write the processes $\hat{X}$ and $\hat{Y}$ in terms of the solution to
the above BSDE $(\hat{\Psi},\hat{Z},\hat{N})$ we first recall a classical result.
\begin{thm}[Moreau Orthogonal Decomposition]
\label{thm_Moreau}
Let $Q\subset\mb{R}^d$ be a closed convex cone and $Q^\circ\subseteq\mb{R}^d$ its polar cone.
Then for all $q,r,u\in\R^d$ the following statements are equivalent:
\begin{enumerate}
\item $u=q+r, \,q\in Q,\, r\in Q^\circ\text{ and } q^\tr r  =0$,
\item $q=\Pi_Q(u),\,r=\Pi_{Q^\circ}(u)$,
\end{enumerate}
where $\Pi$ denotes the projection or nearest point operator onto the indicated set.
\end{thm}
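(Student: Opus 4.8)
The plan is to reduce the equivalence to a single variational characterization of the projection onto a closed convex cone, combined with the bipolar identity $Q^{\circ\circ}=Q$. The key lemma I would establish first is: for any nonempty closed convex cone $Q\subseteq\mb{R}^d$ and any $u\in\R^d$, one has $q=\Pi_Q(u)$ if and only if $q\in Q$, $u-q\in Q^\circ$ and $q^\tr(u-q)=0$. The forward implication uses the standard variational inequality $(u-q)^\tr(w-q)\leq0$ for all $w\in Q$ characterizing the projection onto a closed convex set; inserting $w=0$ and $w=2q$ (both lie in $Q$ since $Q$ is a cone containing the origin) forces $q^\tr(u-q)=0$, after which $(u-q)^\tr w=(u-q)^\tr(w-q)\leq0$ for every $w\in Q$ shows $u-q\in Q^\circ$. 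For the converse, if the three conditions hold then $(u-q)^\tr(w-q)=(u-q)^\tr w-q^\tr(u-q)=(u-q)^\tr w\leq0$ for all $w\in Q$, so $q$ satisfies the variational inequality and hence equals $\Pi_Q(u)$.

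Next I would invoke the bipolar theorem: for a nonempty closed convex cone the relation $Q^{\circ\circ}=Q$ holds (the inclusion $Q\subseteq Q^{\circ\circ}$ is immediate from the definition of the polar cone, while the reverse inclusion is the usual separating-hyperplane argument). This makes the lemma applicable with the roles of $Q$ and $Q^\circ$ interchanged, which is exactly the symmetry needed to handle both $q$ and $r$ simultaneously.

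The equivalence then follows by bookkeeping. For (i)$\Rightarrow$(ii): writing $r=u-q$, the hypotheses give $q\in Q$, $u-q=r\in Q^\circ$ and $q^\tr(u-q)=q^\tr r=0$, so the lemma yields $q=\Pi_Q(u)$; applying the lemma with $Q^\circ$ in place of $Q$ (valid since $q\in Q=Q^{\circ\circ}$) to the symmetric data $r\in Q^\circ$, $u-r=q\in Q^{\circ\circ}$, $r^\tr(u-r)=0$ gives $r=\Pi_{Q^\circ}(u)$. For (ii)$\Rightarrow$(i): from $q=\Pi_Q(u)$ the lemma gives $q\in Q$, $u-q\in Q^\circ$ and $q^\tr(u-q)=0$; applying the lemma to $Q^\circ$ then shows $u-q=\Pi_{Q^\circ}(u)$, which by hypothesis equals $r$, so $u=q+r$ with all the stated properties. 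I do not anticipate a genuine obstacle here, since the result is classical; the only points deserving care are the convention that for a cone the defining inequality $k^\tr l\leq1$ in $Q^\circ$ collapses to $k^\tr l\leq0$, and the appeal to the bipolar theorem that makes the argument symmetric in $Q$ and $Q^\circ$.
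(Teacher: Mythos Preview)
Your argument is correct and is essentially the standard proof of Moreau's decomposition theorem. Note, however, that the paper does not supply its own proof of this statement: it is simply recalled as a classical result, so there is nothing to compare against.
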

\begin{prop}
Suppose that $y=u'(x)$ for some $x>0$ and that the Assumptions \ref{ass_contfilt}, \ref{ass_expmom} and \ref{asscones} hold. 
Given $(\hat{\Psi},\hat{Z},\hat{N})$, the unique solution to the BSDE \eqref{BSDE} with $\hat{\Psi}\in\mf{E}$ and the above driver
$F$ we can write the optimizers, up to indistinguishability, as 
\begin{align*}
\hat{X}^x=x\,\mc{E}(\ti{\nu}\cdot M +\ti{\nu}\cdot \lo M,M\ro\lambda),\quad \quad
\hat{Y}^{y}=y\,\mc{E}\big(-\ti{\kappa}\cdot M+\hat{N}\big),
\end{align*}
where the predictable integrands $\ti{\nu}$ and $\ti{\kappa}$
are defined via
\begin{align*}
\ti{\nu}:=\frac{1}{1-p}P^\tr\ti{\Gamma}^{\frac12}\left[\Pi_{B\mc{K}}\Bigl(B(\hat{Z}+
\lambda)\Bigr)\right],\quad
\ti{\kappa}:=
P^\tr\ti{\Gamma}^{\frac12}\left[B\lambda-\Pi_{(B\mc{K})^\circ}\Bigl(B(\hat{Z}+\lambda)\Bigr)\right]
\end{align*}
and satisfy, $\mu^A$-a.e. $B\ti{\nu}=B\hat{\nu}\text{ and }B\ti{\kappa}=B\hat{\kappa}$.
The process $(\ti{\Gamma}^{i,j})_{i,j=1,\ldots,d}$ is chosen to be a predictable process valued in the space of $d\times d$ diagonal matrices such that 
\begin{equation*}
\ti{\Gamma}^{ij}=
\begin{cases}
1\big/\Gamma^{ii}& \text{ if } i=j \text{ and }\Gamma^{ii}\neq0\\
0 & \text{ if }i\neq j.
\end{cases}
\end{equation*}
\end{prop}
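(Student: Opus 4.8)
The plan is to read off $\hat X$ and $\hat Y$ from the BSDE solution by showing that the new integrands $\ti\nu$ and $\ti\kappa$ coincide with the optimal $\hat\nu$ and $\hat\kappa$ \emph{after applying} $B$, which by the wealth formula \eqref{wealth} and the stochastic-exponential representation of $\hat Y^1$ from Corollary \ref{cor_kappa} is enough to conclude, since $\nu\cdot M+\nu\cdot\lo M,M\ro\lambda$ and $-\kappa\cdot M+\hat N$ depend on $\nu,\kappa$ only through $B\nu,B\kappa$ (because $\lo M,M\ro=B^\tr B\cdot A$). So the whole statement reduces to the two $\mu^A$-a.e. identities $B\ti\nu=B\hat\nu$ and $B\ti\kappa=B\hat\kappa$.

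First I would record the key algebraic fact that $B=\Gamma^{\frac12}P$, so $P^\tr\ti\Gamma^{\frac12}B=P^\tr\ti\Gamma^{\frac12}\Gamma^{\frac12}P=P^\tr\Theta P$ where $\Theta$ is the diagonal $0/1$ matrix with $\Theta^{ii}=1$ iff $\Gamma^{ii}\neq0$; hence $B\,(P^\tr\ti\Gamma^{\frac12}w)=\Gamma^{\frac12}P\,P^\tr\ti\Gamma^{\frac12}w=\Gamma^{\frac12}\ti\Gamma^{\frac12}w=\Theta w$ for any $w\in\mb R^d$. Since every vector of the form $Bv$ lies in the range of $\Gamma^{\frac12}P$, one checks $\Theta(Bv)=Bv$, i.e. $B\,P^\tr\ti\Gamma^{\frac12}(Bv)=Bv$. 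Applying this with $w=\Pi_{B\mc K}(B(\hat Z+\lambda))\in B\mc K$ and with $w=B\lambda-\Pi_{(B\mc K)^\circ}(B(\hat Z+\lambda))$ (the latter equals $\Pi_{B\mc K}(B(\hat Z+\lambda))$ minus the part of $B(\hat Z+\lambda)$ outside the closed range, but it is anyway in the range of $B$ up to the orthogonal complement that $\Theta$ kills) yields
\begin{equation*}
B\ti\nu=\frac{1}{1-p}\,\Pi_{B\mc K}\bigl(B(\hat Z+\lambda)\bigr),\qquad
B\ti\kappa=B\lambda-\Pi_{(B\mc K)^\circ}\bigl(B(\hat Z+\lambda)\bigr),
\end{equation*}
$\mu^A$-a.e. (the degenerate directions in $\ker B$ are precisely those annihilated by $\Theta$, which is the content of Remark \ref{rmk_nullinv}).

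Next I would identify these with $B\hat\nu$ and $B\hat\kappa$ using the results of Section \ref{sec_dualdom}. From Corollary \ref{cor_optkapandnu} and the definition $\hat Z=-\hat\kappa+(1-p)\hat\nu$ one gets, as already exploited in the proof of Proposition \ref{thm_dualBSDE}, the subgradient inequality $(\nu-\hat\nu)^\tr B^\tr B[(1-p)\hat\nu-(\hat Z+\lambda)]\ge0$ for all admissible $\nu$; since (by polyhedrality) arbitrary elements of $B\mc K$ are realized as $B\nu$, $\mu^A$-a.e., this says exactly that $B\hat\nu$ is the projection of $\tfrac{1}{1-p}B(\hat Z+\lambda)$ onto $B\mc K$, i.e. $(1-p)B\hat\nu=\Pi_{B\mc K}(B(\hat Z+\lambda))$, matching $B\ti\nu$. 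For $\hat\kappa$: Corollary \ref{cor_kappa} gives $B(\lambda-\hat\kappa)\in(B\mc K)^\circ$, and $B\hat\nu\in B\mc K$ with $(B\hat\nu)^\tr B(\lambda-\hat\kappa)=0$ by the first identity of Corollary \ref{cor_optkapandnu}; from $\hat Z+\lambda=(1-p)\hat\nu+(\lambda-\hat\kappa)$ we thus have a decomposition $B(\hat Z+\lambda)=(1-p)B\hat\nu+B(\lambda-\hat\kappa)$ with the two summands in $B\mc K$ and $(B\mc K)^\circ$ respectively and orthogonal, so Moreau's theorem (Theorem \ref{thm_Moreau}, applicable since $B\mc K$ is a closed convex cone by polyhedrality) forces $B(\lambda-\hat\kappa)=\Pi_{(B\mc K)^\circ}(B(\hat Z+\lambda))$, i.e. $B\hat\kappa=B\lambda-\Pi_{(B\mc K)^\circ}(B(\hat Z+\lambda))=B\ti\kappa$, $\mu^A$-a.e.

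Finally, with $B\ti\nu=B\hat\nu$ and $B\ti\kappa=B\hat\kappa$ established, I would note that $\ti\nu$ is $M$-integrable since $\ti\nu^\tr\,d\lo M,M\ro\,\ti\nu=(B\ti\nu)^\tr(B\ti\nu)\,dA=(B\hat\nu)^\tr(B\hat\nu)\,dA$ is integrable (and likewise $\ti\kappa$), that $\ti\nu\in\mc K$ $\mu^A$-a.e. because $(1-p)B\ti\nu\in B\mc K$ and $P^\tr\ti\Gamma^{\frac12}$ maps $B\mc K$ back into $\mc K$ modulo $\ker B$ — here one uses Assumption \ref{asscones} so that membership in $\mc K$ can be arranged without affecting $B\ti\nu$ — and then invoke the $B$-dependence of the stochastic integrals to conclude $\ti\nu\cdot M=\hat\nu\cdot M$, $\ti\nu\cdot\lo M,M\ro\lambda=\hat\nu\cdot\lo M,M\ro\lambda$ and $\ti\kappa\cdot M=\hat\kappa\cdot M$ up to indistinguishability, hence $\hat X^x=x\,\mc E(\ti\nu\cdot M+\ti\nu\cdot\lo M,M\ro\lambda)$ and $\hat Y^y=y\,\mc E(-\ti\kappa\cdot M+\hat N)$. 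I expect the main obstacle to be the careful bookkeeping of the $\ker B$ / null-investment directions: one must phrase every "equality of strategies" as an equality after applying $B$ (equivalently $\mu^A$-a.e. equality modulo $\mf N=\ker C$), and check that the explicit formulas genuinely land inside $\mc K$ rather than merely inside $\mc K+\ker B$, which is where Assumption \ref{asscones} (polyhedrality, giving closedness of $B\mc K$ and the selection $\nu=K^j$) does the real work.
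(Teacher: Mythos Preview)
Your proposal is correct and follows essentially the same route as the paper: decompose $B(\hat Z+\lambda)=(1-p)B\hat\nu+B(\lambda-\hat\kappa)$, observe the two summands lie in $B\mc K$ and $(B\mc K)^\circ$ and are orthogonal (Corollaries \ref{cor_kappa} and \ref{cor_optkapandnu}), invoke Moreau's theorem to identify them as the two projections, and then use $B\ti\nu=B\hat\nu$, $B\ti\kappa=B\hat\kappa$ to conclude that the stochastic integrals, and hence the stochastic exponentials, are indistinguishable. The paper simply cites \cite{Nu209}, \cite{HIM05}, \cite{Mo09} for the $\hat X$/$\hat\nu$ half and carries out the Moreau argument only for $\hat\kappa$, whereas you do both halves explicitly; your $\Theta$-matrix bookkeeping is a clean way to phrase what the paper leaves implicit.

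One minor point: your final paragraph worries about whether $\ti\nu\in\mc K$ $\mu^A$-a.e., but this is neither asserted in the proposition nor needed for the representation formulas---the claim is only that $\hat X^x$ and $\hat Y^y$ admit the stated stochastic-exponential forms and that $B\ti\nu=B\hat\nu$, $B\ti\kappa=B\hat\kappa$. Indeed $P^\tr\ti\Gamma^{1/2}$ applied to $B\mc K$ lands in the orthogonal projection of $\mc K$ onto $(\ker B)^\perp$, which need not be contained in $\mc K$ unless $\ker B\subset\mc K$; you can simply drop that step. Also, your parenthetical justification that $B\lambda-\Pi_{(B\mc K)^\circ}(B(\hat Z+\lambda))$ lies in the range of $B$ is slightly garbled; the clean argument is that by Moreau $\Pi_{(B\mc K)^\circ}(B(\hat Z+\lambda))=B(\hat Z+\lambda)-\Pi_{B\mc K}(B(\hat Z+\lambda))$, a difference of two elements of $\mathrm{range}(B)$.
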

\begin{proof}
The formulae for $\hat{X}$ and $\hat{\nu}$ are given (up to null-investments) in \cite{Nu209} Corollary 3.12, cf. also \cite{HIM05} Theorem 14 and \cite{Mo09} Theorem 4.4. To derive the result for $\hat{Y}$ observe that from Proposition \ref{thm_dualBSDE} and the uniqueness result in Theorem \ref{thmMW10} (ii) we have the relation $\hat{Z}\equiv-\hat{\kappa}+(1-p)\hat{\nu}$ which is equivalent to
\begin{equation*}
B(\hat{Z}+\lambda)=B(\lambda-\hat{\kappa})+(1-p)B\hat{\nu}.
\end{equation*}
Since $\mc{K}$ is a cone we see that $(1-p)B\hat{\nu} \in B\mc{K}$, combining this with Corollary \ref{cor_optkapandnu} and using Theorem \ref{thm_Moreau}
we deduce that up to a $\mu^A$-null set
\begin{align}
\label{eq_optkap}
(1-p)B\hat{\nu}& = \Pi_{B\mc{K}}\!\left(B(\hat{Z}+\lambda)\right)\quad\text{ and }\quad B(\lambda-\hat{\kappa})=\Pi_{(B\mc{K})^\circ}\!\left(B(\hat{Z}+\lambda)\right).
\end{align}
We then use the relation $B=\Gamma^{1/2}P$ to write
\begin{equation*}
\Gamma^{1/2}P\hat{\kappa}=B\lambda-\Pi_{(B\mc{K})^\circ}\!\left(B(\hat{Z}+\lambda)\right).
\end{equation*}
The matrix valued process $\Gamma$ may have some zero diagonal elements and so we may not be able to invert the above relation uniquely. However, by the construction of the process $\ti{\kappa}$
we have that $B\hat{\kappa}=B\ti{\kappa}$ holds $\mu^A$-a.e. Integrating
the difference over $[0,T]\times\Omega$ with respect to $\mu^A$ shows
\begin{align*}
\E\!\left[\left\lo \big(\hat{\kappa} -\ti{\kappa}\big)\cdot M,\big(\hat{\kappa} -\ti{\kappa}\big)\cdot M\right\ro_T\right]
=\int_{[0,T]\times\Omega}\|B\big(\hat{\kappa} -\ti{\kappa}\big)\|^2\,d\mu^A=0.
\end{align*}
In particular the stochastic integrals $\hat{\kappa}\cdot M$ and $\ti{\kappa}\cdot
M$ are indistinguishable so that the representation for $\hat{Y}$ now follows. 
\end{proof}

\section{Continuity of the Optimizers}
\label{sec_stability}
In this section we prove Theorem \ref{thm_mainstab} on the continuity of the optimizers 
\begin{equation*}
\hat{X}^{n}:=\hat{X}(\lambda^{n},\mb{P}^{n},p^{n},\U^{n})\text{ and } \hat{\nu}^{n}:=\hat{\nu}(\lambda^{n},\mb{P}^{n},p^{n},\U^{n}), 
\end{equation*}
for the problem 
\begin{equation*}
u^{n}(x):=\sup_{\nu\in\mc{A}_{\U^{n}}}\,\E_{\mb{P}^{n}}\!\!\left[U^{n}\big(X^{{n},x,\nu}_T\big)\right]
\end{equation*}
discussed in Section \ref{sec_sensitivity}, to which we refer for any unexplained notation. We assume throughout that Assumptions \ref{ass_contfilt}, \ref{ass_unif.expmom} and \ref{ass_prefconv} hold and that $x=1$ which, due to the factorization property, is no loss of generality. The first result is a consequence of the standing assumptions
which is used repeatedly and whose proof is left to the reader.
\begin{lem}
\label{lem_convterminal}
The sequence of random variables $(\zeta^{n})_{{n}\in\mb{N}}$ defined via 
\begin{equation*}
\zeta^n:=(L^n)^*+\lo L^n, L^n\ro_T,  
\end{equation*}
converges to zero in $\mb{P}$-probability and satisfies $\sup_{n\in\mb{N}}\E[\exp(c\zeta^{n})]<+\infty$ for all $c>0$.
\end{lem}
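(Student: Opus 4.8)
The plan is to verify the two claims about $\zeta^n := (L^n)^* + \lo L^n, L^n\ro_T$ separately, the first being the easier one. For the convergence in $\mb{P}$-probability, observe that Assumption \ref{ass_prefconv} gives $\lo L^n, L^n\ro_T \to 0$ in $\mb{P}$-probability directly. It then remains to control $(L^n)^* = \sup_{0\leq t\leq T}|L^n_t|$. Since each $L^n$ is a continuous local martingale orthogonal to $M$, the Burkholder-Davis-Gundy inequality (applied along a localizing sequence and passing to the limit, using that $L^n$ is in fact a true martingale by the integrability in Assumption \ref{ass_unif.expmom}) yields $\E\bigl[(L^n)^*\bigr] \leq c\,\E\bigl[\lo L^n,L^n\ro_T^{1/2}\bigr]$. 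Combined with the uniform exponential integrability of $\lo L^n,L^n\ro_T$ this bounds $(L^n)^*$ in $L^2$, and in fact one can argue that $(L^n)^* \to 0$ in $L^2$: for any $\eps>0$, split $(L^n)^*$ on the events $\{\lo L^n,L^n\ro_T \leq \eps\}$ and its complement, using BDG on the first piece and uniform integrability (from the exponential moment bound) on the second, whose probability vanishes. Hence $\zeta^n \to 0$ in $L^1$, a fortiori in probability.

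For the uniform exponential moment bound $\sup_{n}\E[\exp(c\zeta^n)] < +\infty$ for all $c>0$, the key tool is a John-Nirenberg type estimate or, more directly, the following. For a continuous local martingale $L$ with $\lo L,L\ro_T$ possessing exponential moments, the maximal function $L^*$ also has exponential moments, with a quantitative bound. Concretely, by the exponential martingale inequality (or by applying BDG-type bounds to $\exp(aL - \tfrac{a^2}{2}\lo L,L\ro)$, a supermartingale), one has for $a>0$
\begin{equation*}
\mb{P}\bigl((L^n)^* > x\bigr) \leq 2\,\E\bigl[\exp\bigl(\tfrac{a^2}{2}\lo L^n,L^n\ro_T\bigr)\bigr]^{1/2}\,e^{-ax/2}\,\cdot(\text{const}),
\end{equation*}
or more cleanly one estimates $\E[\exp(a(L^n)^*)]$ by decomposing $L^n$ into its positive and negative excursions and using Doob's inequality on $\exp(\pm a L^n - \tfrac{a^2}{2}\lo L^n,L^n\ro)$. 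Since $(a+b)\leq$ leads to $\exp(c\zeta^n) = \exp(c(L^n)^*)\exp(c\lo L^n,L^n\ro_T)$, an application of the Cauchy-Schwarz inequality reduces the claim to separate uniform bounds on $\E[\exp(2c(L^n)^*)]$ and $\E[\exp(2c\lo L^n,L^n\ro_T)]$. The latter is immediate from Assumption \ref{ass_unif.expmom}, and the former follows from the stochastic-exponential estimate above together with, again, Assumption \ref{ass_unif.expmom} applied with a sufficiently large constant to absorb the factor $a^2/2$.

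The main obstacle I anticipate is the passage from a bound on $\E[\exp(a(L^n)^*)]$ to something uniform in $n$ while keeping $a$ arbitrary: the naive exponential-martingale estimate costs a factor like $\E[\exp(\tfrac{a^2}{2}\lo L^n,L^n\ro_T)]^{1/2}$, so one needs Assumption \ref{ass_unif.expmom} to supply \emph{all} orders of exponential moments, which it does — hence the argument closes, but one must be careful that the constant $c$ in the statement can be any positive number and track how large a constant is needed in the invocation of Assumption \ref{ass_unif.expmom}. A secondary technical point is justifying that each $L^n$ is a genuine martingale (not merely local) so that Doob's and BDG's inequalities apply without localization subtleties; this is already noted in the text as a consequence of the integrability assumption, so it can be cited. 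Given these ingredients the proof is routine, which is presumably why the authors leave it to the reader.
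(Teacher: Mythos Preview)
The paper does not supply a proof of this lemma; it explicitly states that ``the first result is a consequence of the standing assumptions which is used repeatedly and whose proof is left to the reader.'' So there is nothing to compare your proposal against on the paper's side.

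Your outline is essentially correct and follows the natural route. A couple of small clarifications would tighten it. For the convergence in probability, the cleanest argument is simply that $\lo L^n,L^n\ro_T\to0$ in probability together with the uniform exponential moments (Assumption~\ref{ass_unif.expmom}) forces $\lo L^n,L^n\ro_T\to0$ in every $L^\rho(\mb{P})$; then BDG gives $\E[(L^n)^*]\leq c\,\E[\lo L^n,L^n\ro_T^{1/2}]\to0$ directly, without the event-splitting you describe. For the uniform exponential moment bound, rather than invoking Doob on the \emph{super}martingales $\mc{E}(\pm aL^n)$ (for which the $L^1$-maximal inequality does not immediately control $\E[\sup]$), it is cleaner to observe that $\exp(\pm cL^n)$ are nonnegative \emph{sub}martingales (since $L^n$ is a true martingale and $\exp$ is convex), apply Doob's $L^2$ maximal inequality to those, and then bound $\E[\exp(2cL^n_T)]$ via the identity $\exp(2cL^n_T)=\mc{E}(4cL^n)_T^{1/2}\exp(4c^2\lo L^n,L^n\ro_T)$ together with Cauchy--Schwarz and $\E[\mc{E}(4cL^n)_T]\leq1$. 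This yields $\E[\exp(c(L^n)^*)]\leq 4\,\E[\exp(8c^2\lo L^n,L^n\ro_T)]^{1/4}$, uniform in $n$ by Assumption~\ref{ass_unif.expmom}. Your final Cauchy--Schwarz splitting of $\exp(c\zeta^n)$ into the $(L^n)^*$ and $\lo L^n,L^n\ro_T$ factors then closes the argument exactly as you say.
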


Given the optimizers $(\hat{X}^{n},\hat{Y}^{n})$ Proposition \ref{thm_dualBSDE} describes the link
to the solution triple $(\hat{\Psi}^{n},\hat{Z}^{n},\hat{N}^{n})$ of the following BSDE under $\mb{P}^{n}$ 
for $n\in\mb{N}_0$ (written in generic variables $(\Psi,Z,N)$),
\begin{equation}
\label{eq_BSDEPn}
d\Psi_t=Z_t^\tr\,dM^n_t+dN_t-F_1^{n}(t,Z_t)\,dA_t-\frac{1}{2}\,d\lo N,N\ro_t, \quad \Psi_T=0.
\end{equation}
Here 
\begin{gather*}
F_1^{n}(\cdot,z)=\frac{1}{2}\,\big\|Bz\big\|^2-\frac{q^{n}}{2}\,\big\|\Pi_{B\U^{n}}(B(z+\lambda^{n}-\beta^{n}))\big\|^2,
\end{gather*}
$M^n:=M+\lo M,M\ro\beta^n$ and $N$ are $\mb{P}^n$-local martingales which are orthogonal
and the necessary integrability conditions are satisfied with respect to the measure $\mb{P}^n$.
To deduce the convergence of $(\hat{X}^n,\hat{Y}^n)$ we shall show first that $(\hat{\Psi}^{n},\hat{Z}^{n},\hat{N}^{n})$ converges to $(\hat{\Psi},\hat{Z},\hat{N})$. In order to do this it is necessary to perform a change of variables
related to considering the BSDE \eqref{eq_BSDEPn} under $\mb{P}$ rather than $\mb{P}^n$. This is the content of the next proposition.
\begin{prop}
\label{prop_BSDEP}
Let $(\hat{\Psi}^{n},\hat{Z}^{n},\hat{N}^{n})$ be as above then the triple 
\begin{equation*}
(\hat{\Xi}^n,\hat{V}^{n},\hat{O}^{n}):=
\left(\hat{\Psi}^{n}+L^{n}-\tfrac{1}{2}\lo L^{n}, L^{n}\ro,\hat{Z}^{n},\hat{N}^{n}+\lo \hat{N}^{n},L^{n}\ro +L^{n}\right)
\end{equation*}
is the unique solution to the BSDE under $\mb{P}$
\begin{equation}
\label{BSDEP1}
d\Psi_t=Z_t^\tr\,dM_t+dN_t-F^{n}(t,Z_t)\,dA_t-\frac{1}{2}\,d\lo N,N\ro_t, \quad \Psi_T=L^{n}_T-\tfrac{1}{2}\lo L^{n}, L^{n}\ro_T,
\end{equation}
with $\Psi\in\mathfrak{E}$ where the generator is given by 
\begin{gather}
\label{eq_defng}
F^{n}(\cdot,z)=\frac{1}{2}\,\big\|Bz\big\|^2-\frac{q^{n}}{2}\,\big\|\Pi_{B\U^{n}}(B(z+\lambda^{n}-\beta^{n}))\big\|^2
-(Bz)^\tr (B\beta^{n}),
\end{gather}
$q^{n}$ is the dual number corresponding to $p^{n}$ and the process $N$ is a $\mb{P}$-local martingale orthogonal to $M$.
\end{prop}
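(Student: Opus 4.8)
The plan is to establish this as a straightforward change of measure/change of variables for the BSDE. The starting point is Proposition \ref{thm_dualBSDE} applied to the $n$-th problem, which tells us that $(\hat{\Psi}^n,\hat{Z}^n,\hat{N}^n)$ solves \eqref{eq_BSDEPn} under $\mb{P}^n$ with $\hat{\Psi}^n\in\mf{E}$ (the exponential-moments membership being with respect to $\mb{P}^n$, but by Lemma \ref{lem_convterminal} and H\"older's inequality this transfers between $\mb{P}$ and $\mb{P}^n$ since the densities have exponential moments of all orders). First I would write out the $\mb{P}$-dynamics of $M^n$ via Girsanov: $M^n = M + \lo M,M\ro\beta^n$, so $dM^n_t = dM_t + d\lo M,M\ro_t\beta^n_t$. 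Substituting this into \eqref{eq_BSDEPn} converts the $Z^\tr dM^n$ term into $Z^\tr dM + (BZ)^\tr(B\beta^n)\,dA$, which produces exactly the extra $-(Bz)^\tr(B\beta^n)$ term in $F^n$ (after moving it to the driver side with the correct sign), while the $-F_1^n\,dA$ part is unchanged, so the driver in the new $\mb{P}$-equation is \eqref{eq_defng}.

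Next I would handle the orthogonal martingale part and the terminal condition simultaneously. Under $\mb{P}^n$ the process $\hat{N}^n$ is a local martingale orthogonal to $M^n$ (equivalently, since the filtration is continuous, $\lo \hat{N}^n, M\ro \equiv 0$, and quadratic variations are measure-independent). The natural way to produce a $\mb{P}$-local martingale orthogonal to $M$ from $\hat{N}^n$ is to set $\hat{O}^n := \hat{N}^n + \lo \hat{N}^n, L^n\ro + L^n$: the term $\lo \hat{N}^n, L^n\ro$ is the Girsanov drift correction (the density of $\mb{P}$ with respect to $\mb{P}^n$ has martingale part involving $L^n$, up to the $-\beta^n\cdot M$ piece which is orthogonal to $\hat N^n$ and $L^n$), so $\hat{N}^n + \lo\hat{N}^n,L^n\ro$ is a $\mb{P}$-local martingale, and adding $L^n$ (a $\mb{P}$-local martingale orthogonal to $M$) keeps it orthogonal to $M$; orthogonality to $M$ of the drift correction follows because $L^n$ and $\hat N^n$ are both orthogonal to $M$. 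Then I would compute the It\^o differential of $\hat{\Xi}^n = \hat{\Psi}^n + L^n - \tfrac12\lo L^n,L^n\ro$. The point is that $-\tfrac12\,d\lo\hat{O}^n,\hat{O}^n\ro$ expands, using $\lo\hat N^n,L^n\ro$-type cross terms, into $-\tfrac12\,d\lo\hat N^n,\hat N^n\ro$ plus exactly the correction terms $d\lo\hat N^n,L^n\ro - \tfrac12 d\lo L^n,L^n\ro$ (modulo the quadratic variation of the $\lo\hat N^n,L^n\ro$ finite-variation term, which is zero) that are needed to match $d(L^n - \tfrac12\lo L^n,L^n\ro)$ against the $d\hat O^n$ contribution; this is a bookkeeping computation of the same flavour as the derivation of \eqref{eq_generator}. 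The terminal value: at $t=T$, $\hat\Psi^n_T=0$, so $\hat\Xi^n_T = L^n_T - \tfrac12\lo L^n,L^n\ro_T$, as claimed.

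For the $\mf{E}$-membership of $\hat\Xi^n$: $\hat\Psi^n\in\mf{E}$ with respect to $\mb{P}$ (as discussed above), $(L^n)^*$ and $\lo L^n,L^n\ro_T$ have exponential moments of all orders under $\mb{P}$ by Lemma \ref{lem_convterminal}, and a product/H\"older argument gives $\hat\Xi^n\in\mf{E}$. Finally, uniqueness of the solution to \eqref{BSDEP1} in the class $\Psi\in\mf{E}$: I would verify that $F^n$ satisfies the structural assumptions needed for the comparison/uniqueness theorem (Assumption \ref{assBSDE}, Theorem \ref{thmMW10}) — quadratic growth in $z$ with the coefficient controlled by $\|B\cdot\|$, local Lipschitz behaviour, and the relevant monotonicity — which should follow from the properties of $F_1^n$ already recorded (the analogue of Proposition \ref{prop_Fnproperties}) together with the fact that the extra linear term $-(Bz)^\tr(B\beta^n)$ is benign given that $\lo\beta^n\cdot M,\beta^n\cdot M\ro_T$ has exponential moments of all orders by Assumption \ref{ass_unif.expmom}; and the terminal condition $L^n_T-\tfrac12\lo L^n,L^n\ro_T$ has exponential moments of all orders, again by Lemma \ref{lem_convterminal}.

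The main obstacle I anticipate is the careful bookkeeping in the orthogonal martingale transformation and the matching of the $-\tfrac12\,d\lo N,N\ro$ terms: one must be sure that $\hat O^n$ as defined is genuinely a $\mb{P}$-local martingale orthogonal to $M$ and that the It\^o expansion of $\hat\Xi^n$ produces precisely the driver $F^n$ and no leftover finite-variation terms. In particular one should check that the $-\beta^n\cdot M$ component of the density of $\mb{P}$ with respect to $\mb{P}^n$ does not contribute to $\lo\,\cdot\,,\hat N^n\ro$ or to $\lo\,\cdot\,,L^n\ro$ (it does not, by orthogonality to $M$ of $\hat N^n$ and $L^n$), so that the only Girsanov correction on the orthogonal part is the $\lo\hat N^n,L^n\ro$ term. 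Everything else is routine given the earlier results.
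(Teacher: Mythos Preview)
Your proposal is correct and follows essentially the same approach as the paper: Girsanov to pass from $\mb{P}^n$ to $\mb{P}$ (yielding the extra $-(Bz)^\tr(B\beta^n)$ in the driver and identifying $\hat{O}^n$ as a $\mb{P}$-local martingale orthogonal to $M$), a direct check of the dynamics and terminal condition, a H\"older estimate using $\hat{\Psi}^n\in\mf{E}$ under $\mb{P}^n$ together with the moment bounds on the density and on $\zeta^n$ to get $\hat{\Xi}^n\in\mf{E}$, and finally uniqueness via Theorem~\ref{thmMW10}(ii) with the structural properties of $F^n$ supplied by Proposition~\ref{prop_Fnproperties}. The paper's proof is considerably terser but the logical structure is identical; your extra bookkeeping on the Girsanov correction for $\hat{N}^n$ (in particular that $\lo\hat{N}^n,\beta^n\cdot M\ro=0$ because $\lo\hat{N}^n,M^n\ro=\lo\hat{N}^n,M\ro=0$) is exactly what is needed and is implicit in the paper's one-line reference to Girsanov.
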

\begin{proof}
The Girsanov theorem implies that $\hat{O}^n$ is a $\mb{P}$-local martingale and its orthogonality to $M$ follows from the 
fact that $\lo\hat{N}^n,M^n\ro\equiv0$ and  $\lo M ,L^n\ro\equiv0$. 
Thanks to \eqref{eq_BSDEPn} the triple $(\hat{\Xi}^n,\hat{V}^{n},\hat{O}^{n})$ then solves \eqref{BSDEP1} with driver \eqref{eq_defng}. Moreover, once we show that $\hat{\Xi}^n\in\mathfrak{E}$ then Theorem \ref{thmMW10} (ii) provides the claimed uniqueness. 
Via H\"{o}lder's inequality, using the notation of Lemma \ref{lem_convterminal}, we have the estimate 
\begin{equation*}
\E\Bigl[\exp\big(c(\hat{\Xi}^n)^*\big)\Bigr]\leq \E\!\left[\left(\frac{d\mb{P}}{d\mb{P}^n}\right)^2\right]^{1/2}\E_{\mb{P}^n}\!\left[\exp\!\big(4c(\hat{\Psi}^n)^*\big)\right]^{1/2}
+\E[\exp(2c\zeta^{n})]<+\infty
\end{equation*}
for all $c>0$. This completes the proof.
\end{proof}

The BSDE (under $\mb{P}=\mb{P}^0$) satisfied by $(\hat{\Psi},\hat{Z},\hat{N})=(\hat{\Psi}^{0},\hat{Z}^{0},\hat{N}^{0})$  related to the optimizers $(\hat{X},\hat{Y})=(\hat{X}^{0},\hat{Y}^{0})$ is given by
\begin{gather}\label{BSDEF0}
d\Psi_t=Z^{\tr}_t\,dM_t+dN_t-F(t,Z_t)\,dA_t-\frac{1}{2}\,d\lo N,N\ro_t,\quad\Psi_T=0,
\end{gather}
where the driver $F=F^{0}$ satisfies  
\begin{align*}
F(\cdot,z)=\frac{1}{2}\,\big\|Bz\big\|^2-
\frac{q}{2}\,\big\|\Pi_{B\U}(B(z+\lambda))\big\|^2.
\end{align*}
Our goal is continuity of the optimizers, which we prove via the stability result in Theorem \ref{ThmStab}. 
We show that it implies convergence of $(\hat{\Xi}^{n},\hat{V}^{n},\hat{O}^{n})$ to $(\hat{\Psi},\hat{Z},\hat{N})$ in an appropriate sense
and then deduce the result for $(\hat{\Psi}^{n},\hat{Z}^{n},\hat{N}^{n})$. We first collect some properties of the drivers $F^{n}$. 

\begin{prop} For each ${n}\in\mb{N}_0$,
\label{prop_Fnproperties}
\begin{enumerate}
        \item The driver $F^{n}$ is continuously differentiable and convex in $z$.
        \item It satisfies a quadratic growth condition in $z$. More precisely,
\begin{equation*}        
|F^{n}(t,z)|\leq\frac12\,\|B_t\beta^{n}_t\|^2+|q^{n}|\|B_t(\lambda^{n}_t-\beta_t^{n})\|^2+(1+|q^{n}|)\|B_tz\|^2.
\end{equation*}
        \item The function $F^{n}$ is locally Lipschitz continuous in $z$, i.e. for all $z_1,z_2\in\mb{R}^d$
\begin{multline*}
|F^{n}(t,z_1)-F^{n}(t,z_2)|\\\leq (1+|q^{n}|)\Big(\|B_t\beta^{n}_t\|+\|B_tz_1\|
+\|B_tz_2\|+\|B_t(\lambda^{n}_t-\beta_t^{n})\|\Big)\big\|B_t(z_1-z_2)\big\|.
\end{multline*}
\item Under the Assumptions \ref{ass_contfilt}, \ref{ass_unif.expmom} and \ref{ass_prefconv} the drivers converge in the sense that 
\begin{equation*}
\lim_{{n}\to+\infty}\int_0^T|F^{n}(t,\hat{Z}_t)-F(t,\hat{Z}_t)|\,dA_t=0
\end{equation*}
in $L^1(\mb{P})$ and hence in $\mb{P}$-probability, where $\hat{Z}$ is the process from above. 
\end{enumerate}
\end{prop}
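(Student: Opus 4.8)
Parts (i)--(iii) are deterministic, pointwise statements about $z\mapsto F^{n}(t,z)$ for fixed $(t,\omega)$, and only (iv) is genuinely analytic. For (i), the plan is to write $F^{n}(t,\cdot)$ as the sum of the convex $C^1$ quadratic $z\mapsto\tfrac12\|B_tz\|^2$, the linear term $z\mapsto-(B_tz)^{\tr}(B_t\beta^{n}_t)$, and $z\mapsto-\tfrac{q^{n}}{2}\,\|\Pi_{B\U^{n}}(B_t(z+\lambda^{n}_t-\beta^{n}_t))\|^2$. The key fact is that for any closed convex cone $Q\subset\mb{R}^d$ the function $x\mapsto\tfrac12\|\Pi_Q(x)\|^2$ is convex and continuously differentiable with gradient $\Pi_Q(x)$: by Moreau's decomposition (Theorem \ref{thm_Moreau}) one has $\|\Pi_Q(x)\|^2=\|x\|^2-\mr{dist}(x,Q)^2$, and $x\mapsto\tfrac12\mr{dist}(x,Q)^2$ is the standard $C^1$ convex function with gradient $x-\Pi_Q(x)$. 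Hence $F^{n}(t,\cdot)$ is $C^1$ with
\[
\nabla_zF^{n}(t,z)=B_t^{\tr}B_tz-q^{n}B_t^{\tr}\Pi_{B\U^{n}}\big(B_t(z+\lambda^{n}_t-\beta^{n}_t)\big)-B_t^{\tr}B_t\beta^{n}_t,
\]
and I would obtain convexity by showing this gradient is monotone: writing $u_i:=B_t(z_i+\lambda^{n}_t-\beta^{n}_t)$ so that $u_1-u_2=B_t(z_1-z_2)$, and using the firm nonexpansiveness $\|\Pi_{B\U^{n}}(u_1)-\Pi_{B\U^{n}}(u_2)\|^2\le\langle\Pi_{B\U^{n}}(u_1)-\Pi_{B\U^{n}}(u_2),u_1-u_2\rangle\le\|u_1-u_2\|^2$ of the cone projection, one finds $\langle\nabla_zF^{n}(t,z_1)-\nabla_zF^{n}(t,z_2),z_1-z_2\rangle=\|u_1-u_2\|^2-q^{n}\langle\Pi_{B\U^{n}}(u_1)-\Pi_{B\U^{n}}(u_2),u_1-u_2\rangle$, which is $\ge\|u_1-u_2\|^2\ge0$ if $q^{n}\le0$ and $\ge(1-q^{n})\|u_1-u_2\|^2\ge0$ if $0<q^{n}<1$. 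These two cases exhaust all possibilities since $q^{n}=p^{n}/(p^{n}-1)$ always satisfies $q^{n}<1$ (it maps $(0,1)$ onto $(-\infty,0)$ and $(-\infty,0)$ onto $(0,1)$).

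For (ii) and (iii), the plan is pure bookkeeping: the triangle inequality, the identity $\big|\,\|a\|^2-\|b\|^2\,\big|\le\|a-b\|(\|a\|+\|b\|)$, the properties $\Pi_{B\U^{n}}(0)=0$, $\|\Pi_{B\U^{n}}(x)\|\le\|x\|$ and $\|\Pi_{B\U^{n}}(x_1)-\Pi_{B\U^{n}}(x_2)\|\le\|x_1-x_2\|$ of the cone projection, Young's inequality $|\langle B_tz,B_t\beta^{n}_t\rangle|\le\tfrac12\|B_tz\|^2+\tfrac12\|B_t\beta^{n}_t\|^2$ for the bilinear term, and $(a+b)^2\le2a^2+2b^2$; I would not track the constants.

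For (iv), I would first record the consequences of the standing assumptions: $\langle\lambda\cdot M,\lambda\cdot M\rangle_T=\int_0^T\|B_t\lambda_t\|^2\,dA_t$ (and likewise for $\beta^{n},\lambda^{n}$); Assumption \ref{ass_prefconv} gives $p^{n}\to p$, hence $q^{n}\to q$ with $\sup_n|q^{n}|<\infty$, and $\int_0^T(\|B_t(\lambda^{n}_t-\lambda_t)\|^2+\|B_t\beta^{n}_t\|^2)\,dA_t\to0$ in $\mb{P}$-probability, which upgrades to $L^1(\mb{P})$ because Assumption \ref{ass_unif.expmom} bounds all exponential moments of those integrals uniformly in $n$; moreover $\Pi_{B\U^{n}(t,\omega)}(x)\to\Pi_{B\U(t,\omega)}(x)$ for every $x$, $\mu^A$-a.e. (Assumption \ref{ass_prefconv} with Proposition \ref{kuraconv}, cf.\ Remark \ref{rmk_KK}); finally $\E[\int_0^T\|B_t\hat{Z}_t\|^2\,dA_t]<\infty$ by the a priori estimates for BSDE solutions with $\hat{\Psi}\in\mf{E}$ (Appendix \ref{appendBSDE}), $\E[\int_0^T\|B_t\lambda_t\|^2\,dA_t]<\infty$ by the $n=0$ case of Assumption \ref{ass_unif.expmom}, and $A$ is bounded so $\mu^A$ is a finite measure. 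Setting $a_t:=\|\Pi_{B\U}(B_t(\hat{Z}_t+\lambda_t))\|$ and $a^{n}_t:=\|\Pi_{B\U^{n}}(B_t(\hat{Z}_t+\lambda^{n}_t-\beta^{n}_t))\|$, I would then split
\[
F^{n}(t,\hat{Z}_t)-F(t,\hat{Z}_t)=-(B_t\hat{Z}_t)^{\tr}(B_t\beta^{n}_t)+\tfrac{q-q^{n}}{2}\,a_t^2+\tfrac{q^{n}}{2}\big(a_t^2-(a^{n}_t)^2\big).
\]
The $\int_0^T(\cdot)\,dA_t$ of the first term is bounded by $(\int_0^T\|B\hat{Z}\|^2\,dA)^{1/2}(\int_0^T\|B\beta^{n}\|^2\,dA)^{1/2}$, whose expectation tends to $0$ by Cauchy--Schwarz in $\mb{P}$; for the second, $\int_0^Ta_t^2\,dA\le\int_0^T\|B(\hat{Z}+\lambda)\|^2\,dA$ has finite expectation while $|q-q^{n}|\to0$. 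For the third, the elementary bound $|a^2-(a^{n})^2|\le|a-a^{n}|^2+2a|a-a^{n}|$ combined with $|a_t-a^{n}_t|\le\|B_t(\lambda^{n}_t-\lambda_t)-B_t\beta^{n}_t\|+e^{n}_t$---obtained by inserting $\pm\Pi_{B\U^{n}}(B(\hat{Z}+\lambda))$ and using nonexpansiveness, with $e^{n}_t:=\|\Pi_{B\U^{n}}(B_t(\hat{Z}_t+\lambda_t))-\Pi_{B\U}(B_t(\hat{Z}_t+\lambda_t))\|$---reduces matters to showing $\int_0^T\|B(\lambda^{n}-\lambda)-B\beta^{n}\|^2\,dA\to0$ in $L^1(\mb{P})$, already noted, and $\int_0^T(e^{n}_t)^2\,dA\to0$ in $L^1(\mb{P})$, which follows by dominated convergence on the finite measure space $([0,T]\times\Omega,\mu^A)$ from $e^{n}_t\to0$ $\mu^A$-a.e. and the majorant $e^{n}_t\le2\|B(\hat{Z}+\lambda)\|\in L^2(\mu^A)$; the cross term $\int_0^Ta_t|a_t-a^{n}_t|\,dA$ is treated by one further Cauchy--Schwarz.

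The step I expect to be the main obstacle is this third term in (iv): controlling the projection onto the \emph{moving} cone $B\U^{n}$ evaluated at the \emph{moving} point $B(\hat{Z}+\lambda^{n}-\beta^{n})$. The device is to separate the two sources of variation---the displacement of the argument, which the fixed nonexpansive map $\Pi_{B\U^{n}}$ absorbs at the cost of $\|B(\lambda^{n}-\lambda)-B\beta^{n}\|$, and the variation of the cone itself, which enters only through $e^{n}_t$ at the single fixed argument $B(\hat{Z}+\lambda)$ and is precisely where the closed-limit hypothesis on $(\U^{n})_{n}$ is used---and then to invoke dominated convergence, for which the uniform exponential integrability of Assumption \ref{ass_unif.expmom} and the a priori $L^1$-bound on $\langle\hat{Z}\cdot M,\hat{Z}\cdot M\rangle_T$ supply the majorants.
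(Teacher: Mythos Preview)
Your proposal is correct and follows essentially the same route as the paper. The only differences are cosmetic: for convexity in (i) you treat both sign cases of $q^{n}$ uniformly via monotonicity of the gradient and firm nonexpansiveness, whereas the paper handles $q^{n}<0$ separately by writing $-\tfrac{q^{n}}{2}\|\Pi_{B\U^{n}}(\cdot)\|^2$ as $\tfrac{q^{n}}{2}\inf_{\eta\in B\U^{n}}(\|\eta\|^2-2\langle\eta,\cdot\rangle)$, an infimum of affine functions; and for (iv) your three-term split merges the paper's four terms $G^{n}+H^{n}+I^{n}+J^{n}$ by combining the cone-variation and argument-variation pieces into $\tfrac{q^{n}}{2}(a^2-(a^{n})^2)$ before separating them via the insertion of $\pm\Pi_{B\U^{n}}(B(\hat{Z}+\lambda))$, while the paper does this separation at the outset. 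The essential tools---Moreau's decomposition for the gradient, nonexpansiveness of the projection, dominated convergence on $\mu^A$ via Proposition~\ref{kuraconv}, and Cauchy--Schwarz for the cross terms---are identical.
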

\begin{proof}
Items (ii) and (iii) follow from the explicit form of the driver together with the Lipschitz property of the distance
function. Items (i) and (iv) are a little more involved and we provide a proof, suppressing the argument $(t,\omega)$ for brevity. Starting with item (i) we recall from Borwein and Lewis \cite{BL06} Section 3.3 that for the function $\theta:\mb{R}^d\to \mb{R}$, 
\[\theta(z):=\big\|Bz-\Pi_{B\U}(Bz)\big\|^2,\] 
we have 
 \[D_z \theta(z_0)(\cdot)=2\,\big\lo Bz_0-\Pi_{B\U}(Bz_0),B(\cdot)\big\ro \]
where $D_z \theta(z_0)$ denotes the differential of $\theta$ with respect to $z$ at a point $z_0\in\mb{R}^d$ (a linear functional on $\mb{R}^d$) and $\lo\cdot,\cdot\ro$ stands for the inner product on $\mb{R}^d$.

From Theorem \ref{thm_Moreau}, we can show differentiability of $F^n$, indeed
\begin{equation*}
\big\|B(z+\lambda^{n}-\beta^{n})\big\|^2=\big\|B(z+\lambda^{n}-\beta^{n})-\Pi_{B\U^{n}}\!(B(z+\lambda^{n}-\beta^{n}))\big\|^2+\big\|\Pi_{B\U^{n}}(B(z+\lambda^{n}-\beta^{n}))\big\|^2
\end{equation*}
and we conclude that
$D_z F^{n}(z_0)(\cdot)=\big\lo Bz_0-q^{n}\Pi_{B\U^{n}}(B(z_0+\lambda^{n}-\beta^{n}))-B\beta^{n},B(\cdot)\big\ro$.

As to convexity we then derive from the Lipschitz property of $\Pi_{B\U^{n}}$ and the Cauchy-Schwarz inequality, that for $q^{n}\in(0,1)$ and for all $z_1,z_2\in\mb{R}^d$
\begin{align*}
\Big(D_z F^{n}(z_1)-D_z F^{n}(z_2)\Big)(z_1-z_2)\geq (1-q^{n})\big\|B(z_1-z_2)\big\|^2\geq 0.
\end{align*}
This is the multidimensional version of monotonicity of the derivatives and it is equivalent to the convexity property, see \cite{BL06} Section 3.1.
For $q^{n}\in(-\infty,0)$ we use the representation
\begin{align*}
F^{n}(z)
&=\frac{1}{2}\,\big\|Bz\big\|^2-(Bz)^\tr (B\beta^{n})+\frac{q^{n}}{2}\inf_{\eta\in B\U^{n}}\left(\|\eta\|^2-2\big\lo\eta,B(z+\lambda^{n}-\beta^{n})\big\ro\right).
\end{align*}
An infimum of affine functions (in $z$) is concave (in $z$), hence the last term is convex in $z$ due to the sign of $q^{n}$. Thus $F^{n}$ is convex as a sum of two convex functions.
 
We continue with item (iv). Using the definition of the drivers one can derive the following inequality
\begin{align*}
|F^{n}(t,\hat{Z}_t)-F(t,\hat{Z}_t)|\leq&\frac{|q|}{2}\cdot\left|\left\|\Pi_{B_t\U_t^{n}}\big(B_t(\hat{Z}_t+\lambda_t)\big)\right\|^2 -\left\|\Pi_{B_t\U_t}\big(B_t(\hat{Z}_t+\lambda_t)\big)\right\|^2\right|\\
&+ \|B_t\hat{Z}_t\|\cdot\|B_t\beta^{n}_t\|\\&
+\left|\frac{q-q^{n}}{2}\right|\cdot\left\|\Pi_{B_t\U_t^{n}}\big(B_t(\hat{Z}_t+\lambda_t)\big)\right\|^2\\
&+\frac{|q^{n}|}{2}\cdot\left|\left\|\Pi_{B_t\U_t^{n}}\big(B_t(\hat{Z}_t+\lambda_t)\big)\right\|^2
-\left\|\Pi_{B_t\U_t^{n}}\big(B_t(\hat{Z}_t+\lambda^{n}_t-\beta^{n}_t)\big)\right\|^2\right|\\
&=:G^{n}_t+H^{n}_t+I^{{n}}_t+J^{n}_t.
\end{align*}
We have to show that 
\begin{equation*}
\lim_{{n}\to+\infty}\E\!\left[\int_0^T(G_t^{n}+H^{n}_t +I^{n}_t+J^{n}_t)\,dA_t\right]=0,
\end{equation*}
for which we work term by term, beginning with $G^{n}$. 
By Proposition \ref{kuraconv} $(G^{n})_{n\in\mathbb{N}}$ then converges to zero $\mu^A-a.e.$ and is dominated by $|q|\cdot\|B(\hat{Z}+\lambda)\|^2$. In particular thanks to the dominated convergence theorem we have 
\begin{equation*}
\lim_{n\to+\infty} \E\!\left[\int_0^TG^{n}_t\,dA_t\right] = \lim_{n\to+\infty} \int_{[0,T]\times\Omega}G^{n}\,d\mu^A  = 0.
\end{equation*}
For the second term we apply the Cauchy-Schwarz inequality to get
\begin{equation*}
\E\!\left[\int_0^T H^{n}_t\,dA_t\right]^2\leq \E\big[\big\lo \hat{Z}\cdot M,\hat{Z}\cdot
M\big\ro_T\big]\,\E\big[\big\lo \beta^{n}\cdot M,\beta^{n}\cdot
M\big\ro_T\big].
\end{equation*}
The convergence to zero now follows from Assumption \ref{ass_unif.expmom}
and the condition on $\beta^{n}$. For the $I^{{n}}$ terms we apply the contraction
property of the projection map to deduce
\begin{equation*}
\E\!\left[\int_0^T I^{{n}}_t\,dA_t\right]\leq \frac{|q-q^{n}|}{2}\,\E\!\left[\left\lo (\hat{Z}+\lambda)\cdot M,(\hat{Z}+\lambda)\cdot M\right\ro_T\right],
\end{equation*}
from which the convergence follows. For the final term we first derive, similarly
to item (iii), the local Lipschitz estimate
\begin{equation*}
J_t^{n} \leq |q^{n}|\Big(2\|B_t\hat{Z}_t\|+\|B_t\lambda_t\|
+\|B_t\lambda^{n}_t\|+\|B_t\beta_t^{n}\|\Big)
\big\|B_t(\lambda_t-\lambda^{n}_t+\beta^{n}_t)\big\|.
\end{equation*}
Applying the Cauchy-Schwarz and Young inequalities we derive the existence
of a constant $\hat{c}$, independent of ${n}$ (due to the convergence assumptions the sequences appearing in the estimates are bounded), such that
\begin{equation*}
\E\!\left[\int_0^T J^{n}_t\,dA_t\right]^2\leq \hat{c}\,\E\!\left[\Big\lo (\lambda-\lambda^{n}+\beta^{n})\cdot M,(\lambda-\lambda^{n}+\beta^{n})\cdot M\Big\ro_T\right].
\end{equation*}
Letting $n$ go to infinity and using Assumptions \ref{ass_unif.expmom} and
\ref{ass_prefconv} the result follows.
\end{proof}  

\begin{thm}
\label{thm_stability}
Let the triple $(\hat{\Psi}^{n},\hat{Z}^{n},\hat{N}^{n})$ denote the
unique solution to the BSDE \eqref{eq_BSDEPn} then 
\begin{gather*}
\lim_{{n}\to+\infty}\E\!\left[\exp\!\left(\rho\,\big(\hat{\Psi}^{n}-\hat{\Psi}\big)^*\right)\right]=1,\\
\lim_{{n}\to+\infty}\E\!\left[\left( \big\lo(\hat{Z}^{n}-\hat{Z})\cdot M,(\hat{Z}^{n}-\hat{Z})\cdot M\big\ro_T
+\lo \hat{N}^{n}-\hat{N},\hat{N}^{n}-\hat{N}\ro_T\right)^{\rho/2}\right]=0,
\end{gather*}
for all $\rho\geq1$, where $(\hat{\Psi},\hat{Z},\hat{N})$ denotes the unique solution triple of the BSDE \eqref{BSDE} with $\hat{\Psi}\in\mf{E}$. 
\end{thm}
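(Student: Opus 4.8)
The plan is to transfer the problem to the measure $\mb{P}$, apply the BSDE stability result of Appendix \ref{appendBSDE} (Theorem \ref{ThmStab}), and then transfer back. First I would work with the $\mb{P}$-BSDEs from Proposition \ref{prop_BSDEP}: the triple $(\hat{\Xi}^n,\hat{V}^n,\hat{O}^n)$ solves \eqref{BSDEP1} with terminal condition $L^n_T-\tfrac12\lo L^n,L^n\ro_T$ and driver $F^n$, while $(\hat{\Psi},\hat{Z},\hat{N})$ solves \eqref{BSDEF0} with terminal condition $0$ and driver $F=F^0$. To apply the stability theorem one needs three ingredients: (a) uniform exponential integrability of the solution suprema $(\hat{\Xi}^n)^*$, which is exactly the $\mathfrak{E}$-bound established inside the proof of Proposition \ref{prop_BSDEP} via H\"older's inequality together with Assumption \ref{ass_unif.expmom} and Lemma \ref{lem_convterminal}; (b) convergence of the terminal conditions, which follows from Lemma \ref{lem_convterminal} since $\zeta^n\to0$ in $\mb{P}$-probability with uniform exponential moments, so $L^n_T-\tfrac12\lo L^n,L^n\ro_T\to0$ in the requisite sense; (c) convergence of the drivers evaluated along the limiting $Z$, namely $\int_0^T|F^n(t,\hat{Z}_t)-F(t,\hat{Z}_t)|\,dA_t\to0$ in $L^1(\mb{P})$, which is precisely Proposition \ref{prop_Fnproperties}(iv), together with the uniform quadratic-growth and locally-Lipschitz structure from Proposition \ref{prop_Fnproperties}(i)--(iii) that feed Assumption \ref{assBSDE}. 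Feeding these into Theorem \ref{ThmStab} yields
\begin{equation*}
\lim_{n\to+\infty}\E\!\left[\exp\!\left(\rho(\hat{\Xi}^n-\hat{\Psi})^*\right)\right]=1
\end{equation*}
and $\mc{H}^\rho$-convergence of the martingale parts $\hat{V}^n\cdot M+\hat{O}^n$ to $\hat{Z}\cdot M+\hat{N}$ for every $\rho\geq1$.

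Next I would undo the change of variables. Since $\hat{V}^n=\hat{Z}^n$ the integrand convergence for $Z$ is immediate once I control $\lo(\hat{Z}^n-\hat{Z})\cdot M,(\hat{Z}^n-\hat{Z})\cdot M\ro_T$; this is extracted from the $\mc{H}^\rho$-convergence of martingale parts by noting $\lo \hat{V}^n\cdot M,\hat{O}^n\ro=\lo\hat{Z}^n\cdot M,\hat{N}^n+\lo\hat{N}^n,L^n\ro+L^n\ro$, all cross terms of which vanish because $\hat{N}^n\perp M$ and $L^n\perp M$, so the bracket of the $\mb{P}$-martingale part splits cleanly into the $\hat{Z}^n$-part and the $\hat{O}^n$-part. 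For $\hat{N}$: from $\hat{O}^n=\hat{N}^n+\lo\hat{N}^n,L^n\ro+L^n$ one writes $\hat{N}^n-\hat{N}=(\hat{O}^n-\hat{N})-L^n-\lo\hat{N}^n,L^n\ro$, and estimates $\lo\hat{N}^n-\hat{N},\hat{N}^n-\hat{N}\ro_T^{\rho/2}$ by $\mc{H}^\rho$-convergence of $\hat{O}^n\to\hat{N}$, the vanishing of $\lo L^n,L^n\ro_T$ (Lemma \ref{lem_convterminal}, with uniform exponential moments giving $L^\rho$-convergence via Vitali), and a Kunita--Watanabe plus Cauchy--Schwarz bound $\lo\hat{N}^n,L^n\ro_T^2\leq\lo\hat{N}^n,\hat{N}^n\ro_T\lo L^n,L^n\ro_T$, where $\lo\hat{N}^n,\hat{N}^n\ro_T$ is uniformly $L^r$-bounded for all $r$ thanks to the uniform bounds coming out of the stability estimate. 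Finally, for the $\hat{\Psi}$-statement, $\hat{\Psi}^n-\hat{\Psi}=(\hat{\Xi}^n-\hat{\Psi})-(L^n-\tfrac12\lo L^n,L^n\ro)$, so $(\hat{\Psi}^n-\hat{\Psi})^*\leq(\hat{\Xi}^n-\hat{\Psi})^*+\zeta^n$; combining $\E[\exp(\rho(\hat{\Xi}^n-\hat{\Psi})^*)]\to1$ with $\zeta^n\to0$ in probability plus uniform exponential moments (hence $\exp(\rho'\zeta^n)\to1$ in $L^1$ for every $\rho'$) and one more H\"older split gives $\E[\exp(\rho(\hat{\Psi}^n-\hat{\Psi})^*)]\to1$.

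The main obstacle is not any single estimate but making the bootstrapping uniform: the stability theorem is quoted for a \emph{fixed} driver and terminal condition, so its hypotheses (Assumption \ref{assBSDE}) must be verified \emph{uniformly in $n$}, and the uniform exponential moments needed to pass from convergence-in-probability of the perturbations $\zeta^n,\lo L^n,L^n\ro_T$ to $L^\rho$- or exponential-$L^1$-convergence must be tracked carefully — precisely the role of Assumption \ref{ass_unif.expmom}. The delicate point is that the a priori bounds on $\lo\hat{N}^n,\hat{N}^n\ro_T$ used in the back-transfer come out of the stability estimate itself, so the argument is mildly circular unless one first establishes a uniform-in-$n$ a priori bound on the solution triples (again from Proposition \ref{prop_Fnproperties} and the uniform integrability) and only then invokes convergence; I would state that uniform bound explicitly before the back-transfer step.
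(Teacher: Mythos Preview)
Your strategy is exactly the paper's: pass to $\mb{P}$ via Proposition \ref{prop_BSDEP}, invoke Theorem \ref{ThmStab} with inputs supplied by Proposition \ref{prop_Fnproperties} and Assumption \ref{ass_unif.expmom}, then undo the change of variables; the $\hat{\Psi}$-step and the $\hat{Z}$-step are handled precisely as you describe.

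The one place you overcomplicate matters is the back-transfer for $\hat{N}^n$, and this is what creates your ``mildly circular'' worry. You write $\hat{N}^n-\hat{N}=(\hat{O}^n-\hat{N})-L^n-\lo\hat{N}^n,L^n\ro$ and then plan to control the last term via Kunita--Watanabe, which forces you to bound $\lo\hat{N}^n,\hat{N}^n\ro_T$ uniformly. But $\lo\hat{N}^n,L^n\ro$ is a continuous finite-variation process, so it contributes nothing to the quadratic variation; hence
\[
\lo\hat{N}^n-\hat{N},\hat{N}^n-\hat{N}\ro_T
=\big\lo(\hat{O}^n-\hat{N})-L^n,(\hat{O}^n-\hat{N})-L^n\big\ro_T
\leq 2\,\lo\hat{O}^n-\hat{N},\hat{O}^n-\hat{N}\ro_T+2\,\lo L^n,L^n\ro_T,
\]
and the right-hand side goes to zero in every $L^{\rho/2}$ directly from Theorem \ref{ThmStab} and Lemma \ref{lem_convterminal}. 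No a priori bound on $\lo\hat{N}^n,\hat{N}^n\ro_T$ is needed, and the circularity evaporates. A minor remark: the uniform input for Theorem \ref{ThmStab} is the exponential integrability of $|\xi^n|+|\alpha^n|_1$, not of $(\hat{\Xi}^n)^*$ as you phrase it in (a); the former is immediate from Assumption \ref{ass_unif.expmom} and Proposition \ref{prop_Fnproperties}(ii).
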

\begin{proof}
Using the notation of Lemma \ref{lem_convterminal} and Proposition \ref{prop_BSDEP} we can write
\[0\leq \big(\hat{\Psi}^n-\hat{\Psi}\big)^*\leq\big(\hat{\Xi}^n-\hat{\Psi}\big)^*+(\zeta^n)^*.\]
Hence the sequence $\left(\exp\!\Big(\rho\big(\hat{\Psi}^n-\hat{\Psi}\big)^*\Big)\right)_{n\in\mb{N}}$ is uniformly integrable and converges to zero in $\mb{P}$-probability. Both these claims are consequences of Lemma \ref{lem_convterminal} and Theorem \ref{ThmStab}, whose conditions are guaranteed by Proposition \ref{prop_Fnproperties} and Assumption \ref{ass_unif.expmom}. Since $\hat{Z}^n\equiv\hat{V}^n$ and 
\[ \lo \hat{N}^{n}-\hat{N},\hat{N}^{n}-\hat{N}\ro_T\leq 2\lo \hat{O}^{n}-\hat{N},\hat{O}^{n}-\hat{N}\ro_T 
+2\lo L^{n},L^{n}\ro_T,\]
we derive the second convergence in a similar fashion.
\end{proof}
We now show how this implies convergence of the objects of interest, and begin with the primal variables. 
\begin{thm}
\label{thm_nuconv}
We have that for all $\rho\geq1$
\begin{equation*}
\lim_{n\to+\infty}\E\left[\big\lo(\hat{\nu}^{n}-\hat{\nu})\cdot M,(\hat{\nu}^{n}-\hat{\nu})\cdot M\big\ro_T^{\rho/2}\right] = 0.
\end{equation*}
In particular, $(\hat{\nu}^{{n}}-\hat{\nu})\cdot M$ converges to zero in $\mc{M}^2$ and hence in the semimartingale topology. 
\end{thm}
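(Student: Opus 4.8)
The plan is to reduce the claim to the projection representation of the optimal strategy together with the BSDE stability of Theorem \ref{thm_stability}. By Proposition \ref{thm_bijection} (see in particular \eqref{eq_optkap}), applied to the base problem and to the $n$-th problem respectively — recalling that the market price of risk governing the $n$-th problem under $\mb{P}^{n}$ is $\lambda^{n}-\beta^{n}$, and that $\hat{Z}^{n}$ denotes the $Z$-component of the solution of \eqref{eq_BSDEPn} — we have $\mu^A$-a.e.
\[
(1-p)B\hat{\nu}=\Pi_{B\U}\big(B(\hat{Z}+\lambda)\big),\qquad
(1-p^{n})B\hat{\nu}^{n}=\Pi_{B\U^{n}}\big(B(\hat{Z}^{n}+\lambda^{n}-\beta^{n})\big).
\]
Since $\lo(\hat{\nu}^{n}-\hat{\nu})\cdot M,(\hat{\nu}^{n}-\hat{\nu})\cdot M\ro_T=\int_0^T\|B_t\hat{\nu}^{n}_t-B_t\hat{\nu}_t\|^2\,dA_t$, it suffices to show this integral tends to $0$ in $L^{\rho/2}(\mb{P})$ for every $\rho\geq1$; taking $\rho=2$ is then exactly convergence in $\mc{M}^2$, and convergence in $\mc{M}^2$ (hence locally in $\mc{H}^2$) gives convergence in the semimartingale topology by Proposition \ref{semimarttop}(i).

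Following the pattern of the proof of Proposition \ref{prop_Fnproperties}(iv), I would write $B\hat{\nu}^{n}-B\hat{\nu}$ as the sum of the three terms
\[
\Big(\tfrac{1}{1-p^{n}}-\tfrac{1}{1-p}\Big)\,\Pi_{B\U^{n}}\big(B(\hat{Z}^{n}+\lambda^{n}-\beta^{n})\big),
\]
\[
\tfrac{1}{1-p}\Big[\Pi_{B\U^{n}}\big(B(\hat{Z}^{n}+\lambda^{n}-\beta^{n})\big)-\Pi_{B\U^{n}}\big(B(\hat{Z}+\lambda)\big)\Big],
\]
\[
\tfrac{1}{1-p}\Big[\Pi_{B\U^{n}}\big(B(\hat{Z}+\lambda)\big)-\Pi_{B\U}\big(B(\hat{Z}+\lambda)\big)\Big],
\]
and bound $\int_0^T\|\cdot\|^2\,dA$ of each, using $\|a+b+c\|^2\leq3(\|a\|^2+\|b\|^2+\|c\|^2)$. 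For the first term, $\|\Pi_{B\U^{n}}(w)\|\leq\|w\|$ ($B\U^{n}$ being a cone, Theorem \ref{thm_Moreau}) bounds the integral of the squared projection by $3\big(\lo\hat{Z}^{n}\cdot M,\hat{Z}^{n}\cdot M\ro_T+\lo\lambda^{n}\cdot M,\lambda^{n}\cdot M\ro_T+\lo\beta^{n}\cdot M,\beta^{n}\cdot M\ro_T\big)$, which is bounded in every $L^{r}(\mb{P})$ uniformly in $n$ by Theorem \ref{thm_stability} (for $\hat{Z}^{n}$, together with the a priori bounds of Appendix \ref{appendBSDE} giving $\hat{Z}\cdot M\in\mc{M}^{r}$) and Assumption \ref{ass_unif.expmom} (for $\lambda^{n}$ and $\beta^{n}$); since the deterministic prefactor tends to $0$ as $p^{n}\to p$, this term tends to $0$ in $L^{\rho/2}$. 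For the second term, nonexpansiveness of $\Pi_{B\U^{n}}$ bounds the integral of the squared norm by a constant multiple of $\lo(\hat{Z}^{n}-\hat{Z})\cdot M,(\hat{Z}^{n}-\hat{Z})\cdot M\ro_T+\lo(\lambda^{n}-\lambda)\cdot M,(\lambda^{n}-\lambda)\cdot M\ro_T+\lo\beta^{n}\cdot M,\beta^{n}\cdot M\ro_T$; the first summand tends to $0$ in $L^{\rho/2}$ by Theorem \ref{thm_stability}, and the other two tend to $0$ in $\mb{P}$-probability by Assumption \ref{ass_prefconv}, the upgrade to $L^{\rho/2}$ being supplied by the uniform exponential moments of Assumption \ref{ass_unif.expmom}.

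For the third term the argument $w:=B(\hat{Z}+\lambda)$ does not depend on $n$, so Proposition \ref{kuraconv} gives $\Pi_{B\U^{n}}(w)\to\Pi_{B\U}(w)$ for $\mu^A$-a.e. $(t,\omega)$; moreover the integrand is dominated $\mu^A$-a.e. by $\tfrac{4}{(1-p)^2}\|B(\hat{Z}+\lambda)\|^2$, which is $\mu^A$-integrable since $\E\big[\lo(\hat{Z}+\lambda)\cdot M,(\hat{Z}+\lambda)\cdot M\ro_T\big]<+\infty$, so dominated convergence gives $\E[\int_0^T\|\cdot\|^2\,dA]\to0$ and then, invoking the same bound once more, convergence to $0$ in $L^{\rho/2}(\mb{P})$. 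Combining the three estimates via the triangle inequality in $L^{\rho/2}$ yields $\E\big[\lo(\hat{\nu}^{n}-\hat{\nu})\cdot M,(\hat{\nu}^{n}-\hat{\nu})\cdot M\ro_T^{\rho/2}\big]\to0$, as required, and the $\mc{M}^2$ and semimartingale-topology assertions follow as explained above.

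The step I expect to be the main obstacle is the third term, i.e. transferring the closed-limit convergence $B\U^{n}\to B\U$ through the projection operators and then upgrading it to an $L^{\rho/2}$ statement. The structural idea that makes this work is to isolate the varying-argument error in the second term, where nonexpansiveness of $\Pi_{B\U^{n}}$ applies, so that in the third term the projected vector is frozen and one only needs the pointwise continuity of the projection in the set argument (Proposition \ref{kuraconv}), together with a $\mu^A$-integrable domination — this last ingredient is precisely where the exponential-moments hypothesis (Assumption \ref{ass_unif.expmom}) and the $\mc{M}^2$-regularity of $\hat{Z}\cdot M$ and $\lambda\cdot M$ enter.
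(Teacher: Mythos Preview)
Your proposal is correct and follows essentially the same approach as the paper: the paper rewrites $\E[\lo(\hat{\nu}^{n}-\hat{\nu})\cdot M,(\hat{\nu}^{n}-\hat{\nu})\cdot M\ro_T^{\rho/2}]$ via the projection formula \eqref{eq_optkap} and then says ``proceed similarly to the proof of Proposition \ref{prop_Fnproperties}(iv)'', which is exactly the three-term splitting (risk-aversion prefactor, varying argument with fixed cone via nonexpansiveness, varying cone with frozen argument via Proposition \ref{kuraconv} plus domination) that you have written out in detail. Your treatment of the third term --- pointwise convergence of projections $\mu^A$-a.e., domination by $\|B(\hat{Z}+\lambda)\|^2$, and the upgrade to $L^{\rho/2}$ by uniform integrability coming from $\hat{Z}\cdot M,\lambda\cdot M\in\mc{M}^r$ for all $r$ --- is precisely what the paper has in mind.
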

\begin{proof}
Using the definitions, it follows that 
\begin{equation*}
\lim_{\n\to+\infty}\E\left[\big\lo(\hat{\nu}^{n}-\hat{\nu})\cdot M,(\hat{\nu}^{n}-\hat{\nu})\cdot M\big\ro_T^{\rho/2}\right] = 0
\end{equation*}
is equivalent to 
\begin{equation*}
\lim_{n\to+\infty}\E\!\left[\!\left(\!\int_0^T\!\left\|\frac{\Pi_{B_t\mc{K}_t^{n}}\big(B_t(\hat{Z}_t^{n}+\lambda_t^{n}-\beta^{n}_t)\big)}{(1-p^{n})}
-\frac{\Pi_{B_t\mc{K}_t}\big(B_t(\hat{Z}_t+\lambda_t)\big)}{(1-p)}\right\|^2 \!dA_t\!\right)^{{\rho}/{2}}\right]=0.
\end{equation*}
To establish this we proceed similarly to the proof of Proposition \ref{prop_Fnproperties} (iv) 
so that Proposition \ref{semimarttop} (i) then yields the assertion.
\end{proof}
\begin{thm}
\label{thm_Xconv}
The sequence of processes $\hat{X}^n\in\mc{X}(x)$, $n\in\mb{N}$, converges to $\hat{X}\equiv\hat{X}^0\in\mc{X}(x)$ in the semimartingale topology.
\end{thm}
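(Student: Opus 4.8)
The plan is to express both families of optimal wealth processes as stochastic exponentials and reduce everything to the convergence of the strategies already established in Theorem~\ref{thm_nuconv}, exploiting the continuity of $\mc{E}$ in the semimartingale topology, Proposition~\ref{semimarttop}(ii). By the factorization property we may take $x=1$. Writing $\hat{\nu}^{n}$ for the optimal strategy of the $n$-th problem, \eqref{eq_wealthUnderPn} gives $\hat{X}^{n}=\mc{E}(\Theta^{n})$ and $\hat{X}=\mc{E}(\Theta)$, where
\begin{equation*}
\Theta^{n}:=\hat{\nu}^{n}\cdot M+\hat{\nu}^{n}\cdot\lo M,M\ro\lambda^{n},\qquad \Theta:=\hat{\nu}\cdot M+\hat{\nu}\cdot\lo M,M\ro\lambda .
\end{equation*}
Each $\Theta^{n}$ is a continuous $\mb{P}$-semimartingale whose canonical decomposition has martingale part $\hat{\nu}^{n}\cdot M$ and (continuous) finite variation part $\int_0^{\cdot}(\hat{\nu}^{n}_t)^\tr B_t^\tr B_t\lambda^{n}_t\,dA_t$. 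Hence it is enough to prove that $\Theta^{n}\to\Theta$ in the semimartingale topology.

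First I would dispose of the local martingale part of $\Theta^{n}-\Theta$, namely $(\hat{\nu}^{n}-\hat{\nu})\cdot M$: by Theorem~\ref{thm_nuconv} it converges to zero in $\mc{M}^2$, hence in $\mc{H}^2$, hence in the semimartingale topology. It remains to control the finite variation part $V^{n}:=\hat{\nu}^{n}\cdot\lo M,M\ro\lambda^{n}-\hat{\nu}\cdot\lo M,M\ro\lambda$. Using $B^\tr B=C$, the decomposition $(B\hat{\nu}^{n})^\tr B\lambda^{n}-(B\hat{\nu})^\tr B\lambda=(B(\hat{\nu}^{n}-\hat{\nu}))^\tr B\lambda^{n}+(B\hat{\nu})^\tr B(\lambda^{n}-\lambda)$ and the Cauchy--Schwarz inequality with respect to $dA$, its total variation is bounded by
\begin{multline*}
\int_0^T\bigl|dV^{n}_t\bigr|\le \bigl\lo(\hat{\nu}^{n}-\hat{\nu})\cdot M,(\hat{\nu}^{n}-\hat{\nu})\cdot M\bigr\ro_T^{1/2}\bigl\lo\lambda^{n}\cdot M,\lambda^{n}\cdot M\bigr\ro_T^{1/2}\\
+\bigl\lo\hat{\nu}\cdot M,\hat{\nu}\cdot M\bigr\ro_T^{1/2}\bigl\lo(\lambda^{n}-\lambda)\cdot M,(\lambda^{n}-\lambda)\cdot M\bigr\ro_T^{1/2}.
\end{multline*}
Taking $\mb{P}$-expectations and using Cauchy--Schwarz once more, the first summand vanishes in the limit because $\E\bigl[\lo(\hat{\nu}^{n}-\hat{\nu})\cdot M,(\hat{\nu}^{n}-\hat{\nu})\cdot M\ro_T\bigr]\to0$ by Theorem~\ref{thm_nuconv} while $\sup_{n}\E\bigl[\lo\lambda^{n}\cdot M,\lambda^{n}\cdot M\ro_T\bigr]<+\infty$ by Assumption~\ref{ass_unif.expmom}; the second summand vanishes because $\E\bigl[\lo\hat{\nu}\cdot M,\hat{\nu}\cdot M\ro_T\bigr]<+\infty$ (which follows from $\|B\hat{\nu}\|\le|1-p|^{-1}(\|B\hat{Z}\|+\|B\lambda\|)$ together with the integrability of the solution triple of \eqref{BSDEF0} and Assumption~\ref{ass_expmom}) whereas $\lo(\lambda^{n}-\lambda)\cdot M,(\lambda^{n}-\lambda)\cdot M\ro_T\to0$ in $L^1(\mb{P})$, the latter by Assumption~\ref{ass_prefconv} and the uniform integrability granted by $\lo(\lambda^{n}-\lambda)\cdot M,(\lambda^{n}-\lambda)\cdot M\ro_T\le 2\lo\lambda^{n}\cdot M,\lambda^{n}\cdot M\ro_T+2\lo\lambda\cdot M,\lambda\cdot M\ro_T$ and Assumption~\ref{ass_unif.expmom}. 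Consequently $\int_0^T|dV^{n}_t|\to0$ in $\mb{P}$-probability, and a sequence of continuous finite variation processes whose total variations tend to zero in probability converges to zero in the semimartingale topology, as one verifies via Proposition~\ref{semimarttop}(i) (compare also {\'E}mery~\cite{Em79}).

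Since the semimartingale topology is a linear topology, adding the two parts yields $\Theta^{n}\to\Theta$ in it, whence Proposition~\ref{semimarttop}(ii) gives $\hat{X}^{n}=\mc{E}(\Theta^{n})\to\mc{E}(\Theta)=\hat{X}$ in the semimartingale topology, which is the assertion. The one genuinely delicate point will be the finite variation term $V^{n}$: everything substantive is already contained in Theorem~\ref{thm_nuconv}, and what is left is to combine the $L^1$-convergence of $\lo(\hat{\nu}^{n}-\hat{\nu})\cdot M\ro_T$ with the uniform exponential moment bounds of Assumption~\ref{ass_unif.expmom} to annihilate the cross terms, and to invoke the (standard) description of convergence in the semimartingale topology for finite variation processes.
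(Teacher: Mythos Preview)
Your proof is correct and follows essentially the same route as the paper: write $\hat{X}^{n}=\mc{E}(\Theta^{n})$, handle the local martingale part via Theorem~\ref{thm_nuconv}, split the finite variation part by adding and subtracting $\lo\hat{\nu}\cdot M,\lambda^{n}\cdot M\ro$ (your decomposition of $(B\hat{\nu}^{n})^\tr B\lambda^{n}-(B\hat{\nu})^\tr B\lambda$ is exactly this), and conclude by Proposition~\ref{semimarttop}(ii). The only cosmetic difference is that the paper shows $\mc{H}^2$-convergence of $\Theta^{n}$ directly (using the Kunita--Watanabe inequality where you use Cauchy--Schwarz), whereas you obtain $L^1$-convergence of the total variation and then pass through Proposition~\ref{semimarttop}(i); both are valid.
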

\begin{proof}
We note the dynamics of the optimal wealth processes given by \eqref{eq_wealthUnderPn} and
set 
\begin{equation*}
\Upsilon^n:=\hat{\nu}^n\cdot M+\hat{\nu}^n\cdot\lo M,M\ro\lambda^n, 
\end{equation*}
for $n\in\mb{N}_0$. We show the convergence in $\mc{H}^2$ of the sequence $(\Upsilon^n)_{n\in\mb{N}}$ so that the result of the theorem will follow via Proposition \ref{semimarttop} (ii) since $\hat{X}^n=\mc{E}(\Upsilon^n)$ and $\hat{X}=\mc{E}(\Upsilon^0)$. Observe from Theorem \ref{thm_nuconv} that $(\hat{\nu}^{{n}}-\hat{\nu})\cdot M$ converges to zero in $\mc{M}^2$ so that we need only show the convergence of the finite variation parts, namely that 
\begin{equation*}
\lim_{n\to+\infty}\E\!\left[\left(\int_0^T\big|\,d\big(\lo \hat{\nu}^n\cdot M,\lambda^n\cdot M\ro-\lo \hat{\nu}\cdot M,\lambda\cdot M\ro\big)\big|\right)^2\right]=0.
\end{equation*}
Adding and subtracting $\lo \hat{\nu}\cdot M, \lambda^n\cdot M\ro$ and then applying the Kunita-Watanabe inequality, we see that the
above holds due to Theorem \ref{thm_nuconv} together with the convergence of $\lo(\lambda^n-\lambda)\cdot M,(\lambda^n-\lambda)\cdot M\ro_T$ to zero in all $L^\rho(\mb{P})$ spaces. 
\end{proof}
\begin{thm}
\label{thm_uconv}
The value functions $u^{n}$ converge pointwise to $u$. Their derivatives converge pointwise to $u'$.
\end{thm}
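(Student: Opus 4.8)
The plan is to leverage the factorization property together with the convergence results already established for the BSDE solutions. Recall from Theorem \ref{thm_WZ09}(iii) that for each $n\in\mb{N}_0$ the value function factorizes as $u^n(x)=U^n(x)\,c_{p^n}$ for some constant $c_{p^n}$, and from the link with BSDEs (Proposition \ref{thm_dualBSDE}) one has the identification of $c_{p^n}$ in terms of the initial value of the BSDE solution $\hat{\Psi}^n$ at time zero. More precisely, writing $\hat{\Psi}^n_0=\log\bigl((u^n)'(x)\hat{Y}^{n,1}_0/U^{n\prime}(\hat{X}^n_0)\bigr)=\log\bigl((u^n)'(x)/U^{n\prime}(x)\bigr)$ (since $\hat{Y}^{n,1}_0=1$ and $\hat{X}^n_0=x$), and using $(u^n)'(x)=u^n(x)\cdot p^n/x$ (from $u^n(x)=U^n(x)c_{p^n}$) together with $U^{n\prime}(x)=x^{p^n-1}$, one obtains a clean expression relating $\hat{\Psi}^n_0$, $p^n$ and $c_{p^n}$, and hence $u^n(x)$.

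**First** I would make this identification explicit: from $u^n(x)=\frac{1}{p^n}x^{p^n}c_{p^n}$ and $(u^n)'(x)=\frac{1}{p^n}x^{p^n-1}p^n c_{p^n}=x^{p^n-1}c_{p^n}$, we get $\hat{\Psi}^n_0=\log\bigl(x^{p^n-1}c_{p^n}/x^{p^n-1}\bigr)=\log c_{p^n}$. Thus $c_{p^n}=\exp(\hat{\Psi}^n_0)$ and therefore
\begin{equation*}
u^n(x)=\frac{1}{p^n}\,x^{p^n}\exp\bigl(\hat{\Psi}^n_0\bigr),\qquad (u^n)'(x)=x^{p^n-1}\exp\bigl(\hat{\Psi}^n_0\bigr).
\end{equation*}
**Next**, I invoke Theorem \ref{thm_stability}, which gives $\lim_{n\to\infty}\E[\exp(\rho(\hat{\Psi}^n-\hat{\Psi})^*)]=1$ for all $\rho\geq1$; in particular $(\hat{\Psi}^n-\hat{\Psi})^*\to0$ in $\mb{P}$-probability, and since $\hat{\Psi}^n_0$ and $\hat{\Psi}_0$ are deterministic (as $\mc{F}_0$ is trivial), this forces $\hat{\Psi}^n_0\to\hat{\Psi}_0$. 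Combined with $p^n\to p$ from Assumption \ref{ass_prefconv} and the continuity of $(p,t)\mapsto \frac{1}{p}x^p$ and $(p,t)\mapsto x^{p-1}$ in $p$ on $(-\infty,0)\cup(0,1)$ (for fixed $x>0$), and the continuity of $\exp$, we conclude $u^n(x)\to u(x)$ and $(u^n)'(x)\to u'(x)$ pointwise in $x>0$.

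**The main subtlety** is simply justifying that the deterministic quantity $\hat{\Psi}^n_0$ converges; this is immediate once one observes $|\hat{\Psi}^n_0-\hat{\Psi}_0|\leq(\hat{\Psi}^n-\hat{\Psi})^*$ and that the right side tends to zero in probability hence (being a sequence whose relevant evaluations are constants) in the deterministic sense. A minor point worth stating carefully is that one should double-check the sign conventions and the precise form of the factorization constant, i.e. whether $c_{p^n}$ appears as stated or with the reciprocal; this amounts to matching the normalizations in Theorem \ref{thm_WZ09}(iii) and the definition $\hat{\Psi}:=\log\bigl(u'(x)\hat{Y}^1/U'(\hat{X})\bigr)$, and causes no real difficulty. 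No further estimates are needed: the heavy lifting has all been done in Theorem \ref{thm_stability}.
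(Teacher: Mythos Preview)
Your proposal is correct and follows essentially the same approach as the paper: identify $c_{p^n}=e^{\hat{\Psi}^n_0}$, then use Theorem \ref{thm_stability} together with the triviality of $\mc{F}_0$ to conclude $\hat{\Psi}^n_0\to\hat{\Psi}_0$, and finish via $p^n\to p$. Your derivation of $c_{p^n}=e^{\hat{\Psi}^n_0}$ directly from the definition of $\hat{\Psi}^n$ at time $0$ is in fact slightly more direct than the paper's, which obtains the same identity by first writing the SDE for $\exp(\hat{\Psi})U'(\hat{X})$; the paper also spells out in more detail the passage from convergence in probability of $|\hat{\Psi}^n_0-\hat{\Psi}_0|$ to deterministic convergence, but your observation that these are $\mc{F}_0$-measurable and hence constant is sufficient.
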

\begin{proof}
From the BSDE \eqref{BSDEF0} the reader may verify the relation,
\begin{equation*}
d(\exp(\hat{\Psi})U'(\hat{X}))_t=\exp(\hat{\Psi}_t)U'(\hat{X}_t)(-\hat{\kappa}_t\,dM_t+d\hat{N}_t)
\end{equation*}
which implies that
\begin{equation*}
\hat{Y}=u'(x)\hat{Y}^1=\exp(\hat{\Psi})U'(\hat{X})=e^{\hat{\Psi}_0}x^{p-1}\hat{Y}^1 \quad\mb{P}\text{-a.s.}
\end{equation*}
It then follows that $c_p=e^{\hat{\Psi}_0}$ $\mb{P}$-a.s.
which shows that 
\begin{equation*}
u^{n}(x)=U^{n}(x)c^{n}_{p^{n}}=U^{n}(x)e^{{\hat{\Psi}^{n}}_0} \quad\text{$\mb{P}$-a.s.}
\end{equation*}
From Theorem \ref{thm_stability} we have that
$\lim_{{n}\to+\infty}|\hat{\Psi}^{n}_0-\hat{\Psi}_0|=0$
in probability. Hence for an arbitrary $\eps>0$, $\lim_{{n}\to+\infty}\mb{P}(|\hat{\Psi}^{n}_0-\hat{\Psi}_0|>\eps)=0$ which means that for ${n}$ large enough, 
\[\mb{P}(|\hat{\Psi}^{n}_0-\hat{\Psi}_0|>\eps)\leq\tfrac12.\] 
Since $\mc{F}_0$ consists of the $\mb{P}$-null sets and their complements only, we thus derive that there exists some ${m}_0\in\mb{N}$ such that $\mb{P}(|\hat{\Psi}^{n}_0-\hat{\Psi}_0|>\eps)=0$ for all ${n}\in\mb{N}$ with ${n}\geq{m}_0$. In particular, \[\lim_{{m}\to+\infty}\mb{P}\!\left(\left\{\sup_{{n}\geq{m}}|\hat{\Psi}^{n}_0-\hat{\Psi}_0|>\eps\right\}\right)=\lim_{\substack{{m}\to+\infty \\ {m}\geq {m}_0}}\mb{P}\!\left(\bigcup_{{n}\geq{m}}\big\{|\hat{\Psi}^{n}_0-\hat{\Psi}_0|>\eps\big\}\right)=0\] 
which is a well-known criterion for almost sure convergence. Hence $\lim_{{n}\to+\infty}\hat{\Psi}^{n}_0=\hat{\Psi}_0$ $\mb{P}$-a.s. which implies the convergence of $u^{n}(x)$ to $u(x)$. The convergence of $\left(u^{n}\right)'(x)$ to $u'(x)$ is then immediate.
\end{proof}

Similar arguments can be used to study the dual variables and we collect
the results together in the following theorem.

\begin{thm}
Suppose that Assumptions \ref{ass_contfilt}, \ref{ass_unif.expmom} and \ref{ass_prefconv} hold. Then
\begin{enumerate}
\item The processes
$(\hat{\kappa}^{n}-\hat{\kappa})\cdot M$, $n\in\mb{N}$, converge to zero in $\mc{M}^2$.
\item The processes $\hat{Y}^{n}\in\mc{Y}(y)$, $n\in\mb{N}$, converge to $\hat{Y}\equiv\hat{Y}^0\in\mc{Y}(y)$ in the semimartingale topology.
\item The functions $\ti{u}^{n}$, $n\in\mb{N}$, converge pointwise to $\ti{u}$. Their derivatives converge pointwise to $\ti{u}'$. 
\end{enumerate}
\end{thm}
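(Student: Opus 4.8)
The plan is to obtain each of the three items as the dual counterpart of an already-established primal statement: (i) mirrors Theorem~\ref{thm_nuconv}, (ii) mirrors Theorem~\ref{thm_Xconv}, and (iii) mirrors Theorem~\ref{thm_uconv}. All three are powered by the BSDE stability of Theorem~\ref{thm_stability}, together with Assumptions~\ref{ass_unif.expmom} and~\ref{ass_prefconv} and Lemma~\ref{lem_convterminal}. No fundamentally new ingredient is required; the only point that will need genuine care is the change-of-measure bookkeeping in (ii), handled as in Proposition~\ref{prop_BSDEP}.

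For (i), I would start from the representation of $\hat\kappa^n$ supplied by the analogue of Proposition~\ref{thm_bijection}(ii) for the $n$-th problem. Applying Moreau's decomposition (Theorem~\ref{thm_Moreau}) to $\hat Z^n=-\hat\kappa^n+(1-p^n)\hat\nu^n$ gives, $\mu^A$-a.e., $B(\lambda^n-\beta^n-\hat\kappa^n)=\Pi_{(B\U^n)^\circ}\bigl(B(\hat Z^n+\lambda^n-\beta^n)\bigr)$, and for $n=0$ similarly $B(\lambda-\hat\kappa)=\Pi_{(B\U)^\circ}\bigl(B(\hat Z+\lambda)\bigr)$. Since $\lo(\hat\kappa^n-\hat\kappa)\cdot M,(\hat\kappa^n-\hat\kappa)\cdot M\ro_T=\int_0^T\|B(\hat\kappa^n-\hat\kappa)\|^2\,dA$, it suffices to show $B\hat\kappa^n\to B\hat\kappa$ in $L^\rho(\mu^A)$. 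Writing out the difference and using that $\Pi_{(B\U^n)^\circ}$ is $1$-Lipschitz, the error splits into a term bounded by $\|B(\hat Z^n-\hat Z)\|$, which vanishes in $L^\rho(\mu^A)$ by Theorem~\ref{thm_stability}; terms bounded by $\|B(\lambda^n-\lambda)\|$ and $\|B\beta^n\|$, which vanish by Assumptions~\ref{ass_prefconv} and~\ref{ass_unif.expmom} (convergence in probability plus uniform exponential moments upgrading to $L^\rho$); and the pointwise term $\|\Pi_{(B\U^n)^\circ}(B(\hat Z+\lambda))-\Pi_{(B\U)^\circ}(B(\hat Z+\lambda))\|$. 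The last tends to zero $\mu^A$-a.e.\ because, again by Moreau's decomposition, $\Pi_{(B\U^n)^\circ}(w)=w-\Pi_{B\U^n}(w)$ and $\Pi_{B\U^n}\to\Pi_{B\U}$ pointwise $\mu^A$-a.e.\ by Proposition~\ref{kuraconv} (cf.\ Remark~\ref{rmk_KK}); since $0\in B\U^n$ it is dominated by $4\|B(\hat Z+\lambda)\|^2\in L^1(\mu^A)$, so dominated convergence finishes the argument. Collecting terms yields $\E\bigl[\lo(\hat\kappa^n-\hat\kappa)\cdot M,(\hat\kappa^n-\hat\kappa)\cdot M\ro_T^{\rho/2}\bigr]\to 0$, in particular $\mc{M}^2$-convergence.

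For (ii), I would write $\hat Y^n=y\,\mc{E}(\hat\Theta^n)$ where, via Corollary~\ref{cor_kappa} applied to the $n$-th market, $\hat\Theta^n=-\hat\kappa^n\cdot M^n+\hat N^n$ (up to the factor $y$) and $\hat\Theta^0=-\hat\kappa\cdot M+\hat N$. Using $M^n=M+\lo M,M\ro\beta^n$ and the Girsanov theorem (as in Proposition~\ref{prop_BSDEP}), the $\mb{P}$-canonical decomposition of $\hat\Theta^n$ has martingale part $-\hat\kappa^n\cdot M+\hat N^n+\lo\hat N^n,L^n\ro$ and finite-variation part $-\hat\kappa^n\cdot\lo M,M\ro\beta^n-\lo\hat N^n,L^n\ro$. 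The martingale part converges in $\mc{M}^2$ to $-\hat\kappa\cdot M+\hat N$: the $\hat\kappa^n\cdot M$ piece by part (i), and $\hat N^n+\lo\hat N^n,L^n\ro\to\hat N$ by Theorem~\ref{thm_stability} together with Lemma~\ref{lem_convterminal} (controlling $\lo\hat N^n,L^n\ro$ via Kunita--Watanabe, $\lo L^n,L^n\ro_T\to 0$ in every $L^\rho$ while $\lo\hat N^n,\hat N^n\ro_T$ stays bounded in every $L^\rho$). The finite-variation part converges to zero in $L^\rho(\mb{P})$ by the Kunita--Watanabe inequality together with part (i), Assumption~\ref{ass_prefconv} and Lemma~\ref{lem_convterminal}, exactly as in the proof of Theorem~\ref{thm_Xconv}. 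Hence $\hat\Theta^n\to\hat\Theta^0$ locally in $\mc{H}^\rho$ along subsequences, and Proposition~\ref{semimarttop}(i)--(ii) gives $\hat Y^n=y\,\mc{E}(\hat\Theta^n)\to y\,\mc{E}(\hat\Theta^0)=\hat Y$ in the semimartingale topology.

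For (iii), I would argue as in the proof of Theorem~\ref{thm_uconv}: the relation $c^n_{p^n}=e^{\hat\Psi^n_0}$ (the $n$-th instance of $c_p=e^{\hat\Psi_0}$) gives $\ti u^n(y)=\ti U^n(y)\,\ti c^n_{p^n}=-\tfrac{y^{q^n}}{q^n}\bigl(e^{\hat\Psi^n_0}\bigr)^{1/(1-p^n)}$; since $p^n\to p$ forces $q^n\to q$ and $\hat\Psi^n_0\to\hat\Psi_0$ $\mb{P}$-a.s.\ (established in the proof of Theorem~\ref{thm_uconv} using that $\mc{F}_0$ is $\mb{P}$-trivial), letting $n\to+\infty$ gives $\ti u^n(y)\to\ti U(y)\,\ti c_p=\ti u(y)$, and since $\ti u^n(y)=\ti u^n(1)\,y^{q^n}$ the pointwise convergence of $(\ti u^n)'$ to $\ti u'$ is immediate. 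As noted, the sole delicate step across the three parts is the $\mb{P}^n$-to-$\mb{P}$ transcription in (ii): controlling the resulting canonical decomposition uniformly enough to invoke the subsequence characterisation of semimartingale convergence, which is achieved by H{\"o}lder's inequality and the uniform exponential integrability in Lemma~\ref{lem_convterminal} and Assumption~\ref{ass_unif.expmom}.
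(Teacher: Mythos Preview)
Your proof is correct, but for item (i) you take a longer route than the paper. The paper simply uses the linear relation $\hat\kappa^{n}=(1-p^{n})\hat\nu^{n}-\hat Z^{n}$ from Proposition~\ref{thm_bijection}, so that the $\mc{M}^2$-convergence of $(\hat\kappa^{n}-\hat\kappa)\cdot M$ follows immediately from Theorem~\ref{thm_nuconv} (convergence of $\hat\nu^{n}$), Theorem~\ref{thm_stability} (convergence of $\hat Z^{n}$), and $p^{n}\to p$. You instead go back to the projection representation of $\hat\kappa^{n}$ and rerun the argument of Theorem~\ref{thm_nuconv} for the polar cone, using Moreau's identity $\Pi_{(B\U^{n})^\circ}=\mathrm{id}-\Pi_{B\U^{n}}$ to transfer the cone convergence. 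This is perfectly valid and self-contained, but it duplicates work already absorbed into Theorem~\ref{thm_nuconv}; the paper's approach buys a two-line proof. For items (ii) and (iii) your argument and the paper's coincide, except that you spell out the $\mb{P}^{n}$-to-$\mb{P}$ canonical decomposition of $-\hat\kappa^{n}\cdot M^{n}+\hat N^{n}$ in (ii), which the paper leaves implicit by writing only $\Upsilon^{n}=\hat\kappa^{n}\cdot M+\hat N^{n}$ and invoking Theorem~\ref{thm_stability}; your extra bookkeeping (the $\lo\hat N^{n},L^{n}\ro$ terms handled via Kunita--Watanabe and Lemma~\ref{lem_convterminal}) makes the change-of-measure step explicit.
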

\begin{proof}
Item (i) follows from the decomposition 
\[\hat{\kappa}^{n}=(1-p^{n})\hat{\nu}^{n}-\hat{Z}^{n}\] 
together with Theorems \ref{thm_stability} and \ref{thm_nuconv}. Item (i) and Theorem \ref{thm_stability} provides the convergence of 
$\Upsilon^n:=\hat{\kappa}^n\cdot M+\hat{N}^n$ to $\hat{\kappa}\cdot M+\hat{N}$ in $\mc{H}^\rho$ for all $\rho\geq1$. Convergence in the semimartingale topology then follows from Proposition \ref{semimarttop} (i) and (ii). 
For the last item observe that from Theorem \ref{thm_WZ09} we may write
\begin{equation*}
\ti{u}^{n}(y)=\ti{U}^{n}(y)\ti{c}^{n}_{p^{n}},\quad
\ti{c}^{n}_{p^{n}}=(c^{n}_{p^{n}})^{\frac{1}{1-p^{n}}}=e^{\,\frac{1}{1-p^{n}}\,\hat{\Psi}_0^{n}}\quad\mb{P}\text{-a.s.}
\end{equation*}
so that the claim is again a corollary of Theorem \ref{thm_stability}, as in the proof of Theorem \ref{thm_uconv}.
\end{proof}

\appendix

\section{Cone Constrained Utility Maximization}\label{existappend}
The utility maximization problem under polyhedral cone constraints is studied in detail in \cite{KZ03} and \cite{We09} in the additive framework. We hence work in this setting here and refer the reader to Remark \ref{addmultformat} for more details. We note that in the mentioned articles the constraint set $\mc{L}$ is independent of $(t,\omega)$. In this appendix we show how the results of \cite{We09} can be extended to give Theorem \ref{thm_WZ09}. The key result for the analysis above is that the dual optimizer is an element of our specific dual domain of supermartingale measures.

A careful reading of the proof of \cite{We09} Theorem 3.4.2 on existence and uniqueness shows that one needs one specific property of the cone $\U$ ($\mc{L}$ respectively), namely, provided that the set \[\mc{X}^{add}(1) \text{ is closed in the semimartingale topology}\] then the main existence result \cite{We09} Theorem 3.4.2 continues to hold with a predictably measurable, non-empty, closed convex multi-valued mapping $\U$ ($\mc{L}$ respectively).
\begin{lem}
Suppose that $\U$ satisfies Assumption \ref{asscones} then $\mc{X}^{add}(1)$ is closed in the semimartingale topology.
\end{lem}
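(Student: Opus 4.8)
The plan is to start from a sequence $X^n=1+H^n\cdot S$ in $\mc{X}^{add}(1)$ --- so each $H^n$ is a predictable $S$-integrable process that is $\mathcal{L}$-valued $\mu^A$-a.e.\ with $X^n\ge 0$ --- converging to a semimartingale $X$ in the semimartingale topology, and to show that $X$ again lies in $\mc{X}^{add}(1)$. First I would note that, $S$ being continuous, each $X^n$ is continuous, and by Proposition \ref{semimarttop}(iii) the convergence is ucp; passing to a subsequence, $\sup_{0\le t\le T}|X^n_t-X_t|\to 0$ $\mb{P}$-a.s., whence $X_0=1$ and $X\ge 0$. By M\'emin's theorem on the closedness of the set $\{H\cdot S\mid H\text{ predictable }S\text{-integrable}\}$ in the semimartingale topology (\cite{Me80}; see also \cite{CS09}) there is a predictable $S$-integrable $H$ with $X-1=H\cdot S$, so $X=1+H\cdot S$. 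The only remaining point, and the heart of the matter, is to exhibit an $S$-integrable integrand yielding the same integral that is additionally $\mathcal{L}$-valued $\mu^A$-a.e.

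The key observation is that, $S$ being continuous, two $S$-integrable integrands $G_1,G_2$ satisfy $G_1\cdot S=G_2\cdot S$ if and only if $B\,\mathrm{Diag}(S)(G_1-G_2)=0$ $\mu^A$-a.e.: the continuous local martingale part of $(G_1-G_2)\cdot S$ has predictable quadratic variation $\int_0^{\cdot}\|B_s\,\mathrm{Diag}(S_s)(G_{1,s}-G_{2,s})\|^2\,dA_s$, and the finite-variation part $\int(G_1-G_2)^{\tr}\mathrm{Diag}(S)\,C\lambda\,dA$ is controlled by it via Cauchy--Schwarz. Hence it suffices to prove that $B\,\mathrm{Diag}(S)H$ lies $\mu^A$-a.e.\ in $B\,\mathrm{Diag}(S)\mathcal{L}=B\U$, the last identity using $\mathrm{Diag}(S)L^j=K^j$ from Remark \ref{addmultformat}. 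For this I would write the canonical decomposition of the continuous semimartingale $(H^n-H)\cdot S\to 0$ and use that its martingale and finite-variation parts converge to zero separately in the semimartingale topology, which follows from Proposition \ref{semimarttop}(i) together with the fact that the $\mc{H}^\rho$-norm separates the two parts. In particular the terminal quadratic variation of the martingale part tends to zero in $\mb{P}$-probability, that is $\int_0^T\|B\,\mathrm{Diag}(S)(H^n-H)\|^2\,dA\to 0$ in probability; since $A$ is bounded and $\mu^A$ therefore finite, this forces $B\,\mathrm{Diag}(S)H^n\to B\,\mathrm{Diag}(S)H$ in $\mu^A$-measure, hence $\mu^A$-a.e.\ along a further subsequence. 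Since $B_t\,\mathrm{Diag}(S_t)H^n_t\in B_t\U(t)$ $\mu^A$-a.e.\ and $B\U=\mathrm{cone}(BK^1,\dots,BK^m)$ is finitely generated, hence a \emph{closed} convex cone, the limit satisfies $B\,\mathrm{Diag}(S)H\in B\U$ $\mu^A$-a.e.

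Finally I would let $\bar H$ be a predictable selection of the predictably measurable, closed-convex-valued multifunction $(t,\omega)\mapsto\{g\in\mathcal{L}(t,\omega)\mid B_t(\omega)\,\mathrm{Diag}(S_t(\omega))\,g=B_t(\omega)\,\mathrm{Diag}(S_t(\omega))\,H_t(\omega)\}$, which is nonempty $\mu^A$-a.e.\ by the previous step and admits a measurable selection by a measurable selection theorem. Then $\bar H$ is $\mathcal{L}$-valued $\mu^A$-a.e.; it is $S$-integrable because $B\,\mathrm{Diag}(S)\bar H=B\,\mathrm{Diag}(S)H$ $\mu^A$-a.e.\ and $H$ is $S$-integrable; and $\bar H\cdot S=H\cdot S=X-1$ by the equivalence recorded above. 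Thus $X=1+\bar H\cdot S\in\mc{X}^{add}(1)$, which is the claim. The two steps I expect to demand the most care are the closedness of the image cone $B\U$ --- this is exactly where polyhedrality of $\U$ cannot be dispensed with, since the linear image of a general closed cone need not be closed --- and the passage from semimartingale convergence to $\mu^A$-a.e.\ convergence of the $B\,\mathrm{Diag}(S)$-images of the integrands, which has to be run through localization in $\mc{H}^\rho$ and along subsequences.
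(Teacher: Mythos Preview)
Your argument is correct. The paper, however, does not carry out any of this: its entire proof is a one-line appeal to Czichowsky and Schweizer \cite{CS09} Corollary~4.6, which states directly that the set of wealth processes under predictably measurable \emph{polyhedral} cone constraints is closed in the semimartingale topology.

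What you have written is essentially a self-contained reconstruction of a special case of that result. Your route---M\'emin's theorem for the unconstrained integrand, passage from semimartingale convergence to $\mu^A$-a.e.\ convergence of $B\,\mathrm{Diag}(S)H^n$ via the quadratic variation, closedness of the finitely generated image cone $B\U$, and a measurable selection to restore the constraint---is the natural direct argument, and you have correctly identified the one place where polyhedrality is indispensable (closedness of $B\U$). The paper's approach buys brevity and offloads the technical work to \cite{CS09}; your approach buys transparency and makes explicit exactly which structural feature of $\U$ is being used and where. Both the localization/subsequence step and the measurable-selection step that you flag as delicate are indeed the points requiring care, but they go through as you outline; in particular, boundedness of $A$ (so that $\mu^A$ is finite) is what lets you pass from $\mb{P}$-probability convergence of $\int_0^T\|B\,\mathrm{Diag}(S)(H^n-H)\|^2\,dA$ to $\mu^A$-a.e.\ convergence along a subsequence.
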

\begin{proof}
Since $\U(t,\omega)$ (and hence $\mc{L}(t,\omega)$) is a polyhedral cone for all $t\in[0,T]$ $\mb{P}$-a.s. we see that \cite{CS09} Corollary 4.6 applies. This guarantees the result.
\end{proof}
We now adapt the results of \cite{We09} which are in the context of the utility maximization with a random endowment and begin with the primal
problem. 
\begin{lem}
Suppose that Assumptions \ref{ass_expmom} and \ref{asscones} hold. Then:
\begin{enumerate}
\item 
There exists an optimal terminal wealth $\hat{X}_T$, $\hat{X}\in\mc{X}^{add}(1),$ such that 
\begin{equation*}
u(1):=\sup_{X\in\mc{X}^{add}(1)}\,\E\!\left[U\big(X_T\big)\right]=
\E\big[U\big(\hat{X}_T\big)\big].
\end{equation*}
Moreover, any two such primal optimizers $\hat{X}$ and $\bar{X}$ are indistinguishable.
\item We have that $\hat{X}_T>0$ $\mb{P}$-a.s. so there is an optimal strategy $\hat{\nu}\in\mc{A}_\U$ with $\hat{X}=X^{1,\hat{\nu}}\in\mc{X}(1).$
\item The optimal strategy $\hat{\nu}$ is unique in the sense that given any other admissible strategy $\bar{\nu}$ with corresponding wealth process $X^{1,\bar{\nu}}_T$ which is optimal for the primal problem we have \[\E\big[\lo(\hat{\nu}-\bar{\nu})\cdot M,(\hat{\nu}-\bar{\nu})\cdot M\ro_T\big]=0.\]
\end{enumerate}
\end{lem}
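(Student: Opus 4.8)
The plan is to obtain (i) and the positivity in (ii) directly from the generalised form of \cite{We09} Theorem 3.4.2 recorded above, and then to carry out the two remaining pieces of bookkeeping: passing from the additive to the proportional representation of the optimal wealth, and reading off uniqueness of the strategy from uniqueness of the terminal wealth.

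\emph{Step 1 (existence and uniqueness of the optimal wealth).} First I would check the hypotheses of the generalised \cite{We09} Theorem 3.4.2. Proposition \ref{prop_EMM} supplies an equivalent local martingale measure $\mb{Q}$ for $S$ together with $u(1)<+\infty$, while the trivial bound $u(1)\geq\E[U(1)]>-\infty$ makes the maximisation well posed; the preceding lemma gives closedness of $\mc{X}^{add}(1)$ in the semimartingale topology, which is exactly the property of the cone used in the proof of \cite{We09} Theorem 3.4.2. Since the power utility satisfies the Inada conditions, in particular $U'(0+)=+\infty$ and $U(0+)\in\{0,-\infty\}$, that theorem then yields $\hat{X}\in\mc{X}^{add}(1)$ attaining $u(1)$ with $\hat{X}_T>0$ $\mb{P}$-a.s.; strict concavity of $U$ forces $\hat{X}_T=\bar{X}_T$ $\mb{P}$-a.s.\ for any two maximisers, and indistinguishability of the wealth processes follows as in \cite{We09}, via the martingale property of $\hat{X}\hat{Y}$ for the dual optimiser $\hat{Y}$.

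\emph{Step 2 (proportional representation and (ii)).} Writing $\hat{X}=1+H\cdot S$ with $H\in\mc{L}$ as in Remark \ref{addmultformat}, the process $\hat{X}$ is a non-negative $\mb{Q}$-local martingale, hence a $\mb{Q}$-supermartingale, so $\hat{X}_t\geq\E_\mb{Q}[\hat{X}_T\mid\mc{F}_t]>0$ for all $t$, $\mb{P}$-a.s., using $\hat{X}_T>0$ and $\mb{Q}\sim\mb{P}$. I would then set $\hat{\nu}^i:=H^iS^i/\hat{X}$ for $i=1,\ldots,d$; this is predictable and $M$-integrable, and since the $i$th component of each $K^j$ is $S^i$ times that of $L^j$, the requirement $H\in\mc{L}$ becomes $\hat{\nu}\in\U$ $\mu^A$-a.e.\ after division by the strictly positive $\hat{X}$ --- the statement being pointwise in $(t,\omega)$, no measurable selection is needed. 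Hence $\hat{\nu}\in\mc{A}_\U$ and $\hat{X}=X^{1,\hat{\nu}}\in\mc{X}(1)$.

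\emph{Step 3 (uniqueness of the strategy, (iii)).} If $\bar{\nu}\in\mc{A}_\U$ is also optimal for the primal problem, then $X^{1,\bar{\nu}}\in\mc{X}(1)\subset\mc{X}^{add}(1)$ is again a maximiser in (i), hence indistinguishable from $\hat{X}=X^{1,\hat{\nu}}$. Both are stochastic exponentials of continuous semimartingales vanishing at $0$, so taking logarithms and comparing the martingale parts of the unique canonical decompositions of $\log\hat{X}$ and $\log X^{1,\bar{\nu}}$ gives $\hat{\nu}\cdot M=\bar{\nu}\cdot M$; thus $(\hat{\nu}-\bar{\nu})\cdot M\equiv0$ and in particular $\E\big[\lo(\hat{\nu}-\bar{\nu})\cdot M,(\hat{\nu}-\bar{\nu})\cdot M\ro_T\big]=0$.

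I expect the main obstacle to sit inside Step 1: one must verify that constancy of $\mc{L}$ enters the proof of \cite{We09} Theorem 3.4.2 only through closedness of $\mc{X}^{add}(1)$ --- now provided by \cite{CS09} Corollary 4.6 for polyhedral predictably measurable cones --- and that the positivity $\hat{X}_T>0$ survives the generalisation. The latter is the usual perturbation argument: since $0\in\U$ $\mu^A$-a.e.\ the constant wealth $1$ lies in $\mc{X}^{add}(1)$, so $(1-\eps)\hat{X}+\eps\in\mc{X}^{add}(1)$ by convexity; if $\mb{P}(\hat{X}_T=0)>0$, then for $p\in(0,1)$ the utility gain from this perturbation is of order $\eps^{p}$ whereas the loss elsewhere is only of order $\eps$, and for $p<0$ one trivially has $\E[U(\hat{X}_T)]=-\infty$ unless $\hat{X}_T>0$ a.s., so optimality is contradicted in either case.
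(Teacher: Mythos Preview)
Your proposal is correct, but it diverges from the paper's proof in two places worth noting.

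For the indistinguishability of optimal wealth \emph{processes} in (i), you invoke the martingale property of $\hat{X}\hat{Y}$ for the dual optimiser. This works (once $\hat{Y}>0$ is known, the supermartingale $\bar{X}\hat{Y}$ has equal initial and terminal expectations, hence is a martingale, and then $\bar{X}_t\hat{Y}_t=\E[\hat{X}_T\hat{Y}_T\mid\mc{F}_t]=\hat{X}_t\hat{Y}_t$), but in the paper's ordering the dual optimiser is only constructed in the \emph{next} lemma, so your appeal is forward-referencing. The paper instead gives a self-contained ``switching'' argument: assuming $\hat{X}_t>\bar{X}_t$ on some $A\in\mc{F}_t$ with $\mb{P}(A)>0$, one splices the integrands $\hat{H}\I{[0,t]}+\bar{H}\I{(t,T]}\I{A}+\hat{H}\I{(t,T]}\I{A^c}$ to produce an admissible wealth whose terminal value strictly dominates $\bar{X}_T$ in utility, contradicting optimality. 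This avoids any reliance on duality.

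For the positivity $\hat{X}_T>0$ in (ii), you offer the perturbation $(1-\eps)\hat{X}+\eps$ and the $\eps^p$ versus $\eps$ comparison. That argument is fine (with a little care for $p\in(0,1)$, using concavity to control the loss on $\{\hat{X}_T>0\}$). The paper instead reads positivity off the dual first-order condition in \cite{We09} Theorem 3.4.2(iv): $\hat{X}_T=-\ti{U}'\!\big(d\hat{\zeta}_c/d\mb{P}\big)$ for a finite measure $\hat{\zeta}_c\ll\mb{P}$, and $-\ti{U}'(y)=0$ forces $y=+\infty$, which is impossible on a set of positive $\mb{P}$-measure. Your route is more elementary; the paper's is shorter given the cited result.

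Step 3 is essentially the paper's argument, phrased via logarithms rather than the stochastic logarithm.
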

\begin{proof}
From \cite{We09} Theorem 3.4.2 (iii) we see that there is an admissible $\hat{H}$ such that with $\hat{X}_T:=1+(\hat{H}\cdot S)_T$ 
\begin{equation*}
u(1)=\E\big[U\big(\hat{X}_T\big)\big].
\end{equation*}
Since $U$ is strictly concave a standard argument involving convex combinations gives the uniqueness at terminal time, cf. \cite{KS99} Lemma 3.3. For completeness we also derive the uniqueness on the level of processes. Let $\hat{X}$ and $\bar{X}$ be two primal optimizers, for which we know that $\hat{X}_T=\bar{X}_T$. Now suppose there is a $t\in[0,T)$ and a set $A\in\mc{F}_t$ such that $\hat{X}_t>\bar{X}_t$ on $A$ and $\mathbb{P}(A)> 0$. Define the integrand
\begin{equation*}
H:=\hat{H} \I{[0,t]}+\bar{H} \I{(t,T]} \I{A}+\hat{H}\I{(t,T]} \I{A^c},
\end{equation*}
where $\hat{H}$ and $\bar{H}$ are the integrands for $\hat{X}$ and $\bar{X}$. Observe that $X:=1+H\cdot S\in \mc{X}^{add}(1)$ as we have
$(H\cdot S)_u=(\bar{X}_u+\hat{X}_t-\bar{X}_t)\,\mathbf{1}_A+\hat{X}_u\,\mathbf{1}_{A^c}$
for $u\geq t$ and this is nonnegative by assumption (recall that $\hat{X}_t>\bar{X}_t$ on $A$). Now we note that
$\hat{X}_T=\bar{X}_T$ $\mb{P}$-a.s. and write
\begin{align*}
\E[U(X_T)]
=\E\!\left[\I{A^c}\E\!\left[U\big(\hat{X}_T\big)\Big|\,\mc{F}_t\right]
+\I{A}\E\!\left[U\big(\bar{X}_T+\hat{X}_t-\bar{X}_t\big)\Big|\,\mc{F}_t\right]\right]
>\E\!\left[U\big(\bar{X}_T\big)\right]=u(1).
\end{align*}
This is a contradiction and the result in (i) follows from the continuity of the wealth processes. 

For item (ii) observe from \cite{We09} Theorem 3.4.2 (iv) that $\hat{X}_T=-\ti{U}'\!\left(\tfrac{d\hat{\zeta}_c}{d\mb{P}}\right)$ where $\hat{\zeta}_c$ is a finite, nonnegative and countably additive measure that is absolutely continuous with respect to $\mb{P}$. Since $-\ti{U}'(y)=0$ if and only if $y=+\infty$ for $y\geq0$ we cannot have that $\hat{X}_T$ is zero on a set of nonzero $\mb{P}$-measure, this would contradict the finiteness of $\hat{\zeta}_c$. 

For item (iii) we have the equality
$\mc{E}\!\left(\hat{\nu}\cdot M +\hat{\nu}\cdot \lo M,M \ro\lambda\right) \equiv
\mc{E}\!\left(\bar{\nu}\cdot M +\bar{\nu}\cdot \lo M,M \ro\lambda\right)$.
By the uniqueness of the stochastic logarithm we derive that
$\hat{\nu}\cdot M +\hat{\nu}\cdot \lo M,M \ro\lambda \equiv \bar{\nu}\cdot M
+\bar{\nu}\cdot \lo M,M \ro\lambda$
and thus it follows that $(\hat{\nu}-\bar{\nu})\cdot M$ is a continuous local
martingale of finite variation and is hence constant and equal to
zero, which proves the last assertion.
\end{proof}

In \cite{We09} the dual domain is a subset of $L^\infty(\mb{P})^*$, the bounded, finitely additive measures that are absolutely continuous with respect to $\mb{P}$, which contains $\mc{Y}^{add}(y)$ where
\begin{equation*}
\mc{Y}^{add}(y):=\left\{Y\geq 0\,|\,Y_0=y \text{ and } XY \text{
is a supermartingale for all }X\in\mc{X}^{add}(1)\right\}\!.
\end{equation*}
Note that $\mc{Y}^{add}(y)$ depends on $\mc{L}$ (respectively $\mc{K}$). The next lemma, which shows that the dual minimizer of \cite{We09} can be related to an element of $\mc{Y}^{add}(y)$, is key.

\begin{lem}
Let the assumptions of the previous lemma hold. Then, given $y>0$, there is a $\hat{Y}^{y}\in\mc{Y}^{add}(y)$ which is optimal for the dual problem, unique up to indistinguishability and satisfying $\hat{Y}^{y}_T>0$, $\mb{P}$-a.s.
\end{lem}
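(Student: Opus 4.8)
The plan is to import the abstract existence and duality theory from \cite{We09}, which applies because Assumptions \ref{ass_expmom} and \ref{asscones} are in force, and then to upgrade the abstract dual minimizer --- a priori only an element of $L^\infty(\mb{P})^*$ --- to a genuine c\`adl\`ag supermartingale in $\mc{Y}^{add}(y)$. First I would invoke \cite{We09} Theorem 3.4.2 to obtain a finitely additive dual optimizer $\hat{\zeta}$ whose countably additive (regular) part $\hat{\zeta}_c$ satisfies $\hat{X}_T = -\ti{U}'(d\hat{\zeta}_c/d\mb{P})$; by the previous lemma this already forces $d\hat{\zeta}_c/d\mb{P}>0$ $\mb{P}$-a.s., since otherwise $\hat{X}_T=+\infty$ on a set of positive measure, contradicting integrability. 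The optimal value then satisfies $\ti{u}(y)=\E[\ti{U}(y\,d\hat{\zeta}_c/d\mb{P})]$, because the singular part of $\hat{\zeta}$ contributes nothing to $\E[\ti{U}(\cdot)]$ (as $\ti{U}$ is finite at $0$ for $p\in(0,1)$ and one argues by the standard truncation/monotone-convergence reasoning for $p<0$ via the superharmonicity estimates already used in Proposition \ref{prop_EMM}).

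Next, I would manufacture the process. Set $\hat{Y}^y_T := y\,\dfrac{d\hat{\zeta}_c/d\mb{P}}{\zeta_c(\Omega)}$ normalised appropriately, or more directly, define $\hat{Y}^y_t := y\,\esssup$-type conditional expectation, namely the c\`adl\`ag modification of $t\mapsto \essinf_{X\in\mc{X}^{add}(1)} \E[\,\cdot\,]$; concretely one takes $\hat{Y}^y_t$ to be the c\`adl\`ag version of $\E[\,y\,\tfrac{d\hat\zeta_c}{d\mb{P}}\mid\mc F_t\,]$ and checks it is a supermartingale multiplier. The cleaner route, which I would follow, is the one sketched for Corollary \ref{cor_kappa}: by the optional decomposition / supermartingale property intrinsic to the duality, $XY$ is a supermartingale for every $X\in\mc{X}^{add}(1)$ precisely because $\hat{\zeta}_c$ arises as the regular part of a dual optimizer, and the terminal condition $\hat Y^y_T = -y\,(\ti U')^{-1}$-image pairs with $\hat X_T$ via $\hat X_T \hat Y^y_T = y\,x$-type first-order conditions to give that $\hat X \hat Y^y$ is in fact a martingale. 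Uniqueness up to indistinguishability follows from strict convexity of $\ti U$: two dual optimizers must have $\mb{P}$-a.s. equal terminal values, hence equal c\`adl\`ag supermartingale potentials generating the same Doob--Meyer data.

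The main obstacle I anticipate is the passage from the finitely additive optimizer of \cite{We09} to a countably additive, and then c\`adl\`ag-supermartingale, object --- i.e. showing the singular part of $\hat\zeta$ is genuinely irrelevant and that the regular part \emph{does} lie in $\mc{Y}^{add}(y)$ rather than merely in its bipolar closure. For $p\in(0,1)$ this is comparatively benign because $\ti U(0)=0$ and $\ti U\le 0$, so dropping mass at $+\infty$ only increases $\E[\ti U]$ while dropping the singular part does not decrease it; combined with optimality this pins $\hat\zeta$ down. For $p<0$ one needs the moment bound $\E[(Y^\lambda_T)^q]<\infty$ from Proposition \ref{prop_EMM} to rule out degeneracy and to justify the interchange of limits. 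Once $\hat Y^y_T>0$ $\mb{P}$-a.s.\ is established the strict positivity of the whole process is automatic, since it is a nonnegative supermartingale with strictly positive terminal value, hence strictly positive on $[0,T]$.

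Finally, I would record that $\hat Y^y_T>0$ $\mb{P}$-a.s.\ is exactly the hypothesis needed to feed Proposition \ref{PropLZ07} and Corollary \ref{cor_kappa}, so this lemma closes the gap referenced in the proof of Corollary \ref{cor_kappa}; concretely the positivity is read off from $\hat Y^y_T = U'(\hat X_T)$ together with $\hat X_T>0$ $\mb{P}$-a.s., which was proved in item (ii) of the preceding lemma, and strict positivity of $U'$ on $(0,\infty)$.
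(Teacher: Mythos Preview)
Your plan identifies the central obstacle---passing from a terminal random variable to a genuine element of $\mc{Y}^{add}(y)$---but does not resolve it. Defining $\hat{Y}^y_t$ as the c\`adl\`ag version of $\E[\,y\,d\hat{\zeta}_c/d\mb{P}\mid\mc{F}_t\,]$ produces a martingale, but under cone constraints there is no reason the product $X\hat{Y}^y$ should be a supermartingale for every $X\in\mc{X}^{add}(1)$, which is precisely what membership in $\mc{Y}^{add}(y)$ requires. The paper takes a different and more direct route: it starts from the \emph{primal} optimizer, sets $\ti{\eta}:=U'(\hat{X}_T)>0$, and shows via a calculus-of-variations argument (\cite{BP04} Lemma 5.7) that $\ti{\eta}/y\in\mc{C}^\circ$ with $y:=\E[\ti{\eta}\hat{X}_T]$. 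The key step is then the bipolar relation: one has $\mc{C}=(\mc{M}^{\text{sup}})^\circ$ by \cite{We09} Lemma 3.5.7 and $\mc{D}^{\circ\circ}=\mc{D}$ by the argument of \cite{KS99} Lemma 4.1, whence $\mc{C}^\circ=(\mc{M}^{\text{sup}})^{\circ\circ}\subset\mc{D}$. This yields a process $\hat{Y}\in\mc{Y}^{add}(1)$ with $\hat{Y}_T\ge\ti{\eta}/y>0$, and a short chain of inequalities shows it is optimal and that $\hat{X}\hat{Y}$ is a martingale. Your route via the finitely additive optimizer of \cite{We09} could in principle be made to work, but you would still need this bipolar step to place the regular part inside $\mc{D}$, and you leave that open.

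Your uniqueness argument is also incomplete. Strict convexity of $\ti{U}$ gives equality of terminal values, but two supermartingales in $\mc{Y}^{add}(y)$ sharing the same terminal value need not be indistinguishable, and the phrase ``same Doob--Meyer data'' does not make this rigorous. The paper handles this with a switching construction parallel to the primal case: assuming $\hat{Y}_t>\bar{Y}_t$ on some $A\in\mc{F}_t$ with $\mb{P}(A)>0$, it builds
\[
Y:=\hat{Y}\,\I{[0,t]}+\tfrac{\hat{Y}_t}{\bar{Y}_t}\,\bar{Y}\,\I{A}\,\I{(t,T]}+\hat{Y}\,\I{A^c}\,\I{(t,T]},
\]
verifies that $Y\in\mc{Y}^{add}(1)$, and obtains a contradiction from the strict monotonicity of $\ti{U}$.
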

\begin{proof}
Define the sets
\begin{gather*}
\mc{C}:=\{\xi\in L^0(\mb{P})\,|\,0\leq \xi\leq X_T, \, X\in\mc{X}^{add}(1)\}\\
\mc{D}:=\{\eta\in L^0(\mb{P})\,|\,0\leq \eta\leq Y_T, \, Y\in\mc{Y}^{add}(1)\}.
\end{gather*}
By construction and the above lemma we have 
\begin{equation*}
u(1)=\E\big[U\big(\hat{X}_T\big)\big]=\sup_{\xi\in\mc{C}}\,\E\big[U(\xi)\big]
\end{equation*}
and thus, using the Calculus of Variations argument from the proof of Bouchard and Pham \cite{BP04} Lemma 5.7, one can show that with $\ti{\eta}:=U'(\hat{X}_T)>0$ we have 
$\E\big[\ti{\eta}\,(\hat{X}_T-\xi)\big]\geq 0\text{ for all }\xi\in\mc{C}$.
We set $y:=\E\big[\ti{\eta}\hat{X}_T\big]=\E\big[(\hat{X}_T)^p\big]>0$ and observe that 
$\E[\ti{\eta}\,\xi]\leq y\text{ for all }\xi\in\mc{C}$.
Hence $\ti{\eta}/y\in\mc{C}^\circ$, where we write $\mc{C}^\circ$ for the polar  of the cone $\mc{C}$, 
\begin{equation*}
\mc{C}^\circ:=\{\eta\in L^0_+(\mb{P})\,\big|\,\E\!\left[\xi\eta\right]\leq 1\text{ for all }\xi\in\mc{C}\}.
\end{equation*}
Observing from \cite{We09} Lemma 3.5.7 that $\xi\in\mc{C}$ if and only if 
$\xi\geq0 \text{ and } \E_{\mb{Q}}[\xi]\leq 1 \text{ for all }\mb{Q}\in\mc{M}^{\text{sup}}$,
we derive that $\mc{C}=(\mc{M}^{\text{sup}})^\circ$, where 
\begin{equation*} 
\mc{M}^{\text{sup}}:=\{\mb{Q}\sim \mb{P}\,|\, X\text{ is a $\mb{Q}$-supermartingale for all }X\in\mc{X}^{add}(1)\}.
\end{equation*}
Applying the same reasoning as in the proof of \cite{KS99} Lemma 4.1 we derive that $\mc{D}^{\circ\circ}=\mc{D}$ and equating measures $\mb{Q}$ with their densities $Z^\mb{Q}$ we are led to conclude that $\mc{M}^\text{sup}\subset \mc{D}$. Hence $\mc{C}^\circ=(\mc{M}^{\text{sup}})^{\circ\circ}\subset\mc{D}^{\circ\circ}=\mc{D}$ from which $\ti{\eta}/y\in\mc{D}$.
Thus there is a $\hat{Y}\in\mc{Y}^{add}(1)$ with $0<\ti{\eta}/y\leq\hat{Y}_T$ and such that \[1=\E\big[\hat{X}_0\hat{Y}_0\big]\geq\E\big[\hat{X}_T\hat{Y}_T\big]\geq\E\big[\hat{X}_T\ti{\eta}/y\big]=1.\]
In particular $\hat{X}\hat{Y}$ is a martingale. We conclude that $\hat{Y}^y:=y\hat{Y}\in\mc{Y}^{add}(y)$ is a dual optimizer. More explicitly, since $\ti{\eta}=U'(\hat{X}_T)$,
\begin{align*}
\E\big[\ti{U}(\hat{Y}^y_T)\big]&\geq\inf_{Y\in\mc{Y}^{add}(y)}\E\big[\ti{U}(Y)\big]\geq \inf_{Y\in\mc{Y}^{add}(y)}\E\big[U(\hat{X}_T)-\hat{X}_TY\big]\geq\E\big[U(\hat{X}_T)\big]-y\\
&=\E\big[U(\hat{X}_T)\big]-\E\big[\hat{X}_T\ti{\eta}\big]=\E\big[\ti{U}(\ti{\eta})\big]\geq\E\big[\ti{U}(\hat{Y}^y_T)\big].
\end{align*}
For uniqueness we again suppose that there exists a $t\in[0,T)$ and a set $A\in\mc{F}_t$ such that $\hat{Y}_t>\bar{Y}_t$ on $A$ and $\mathbb{P}(A)>0$, where $\hat{Y}$ and $\bar{Y}$ are two optimal dual processes that are necessarily equal at terminal time $T$.
Since the dual function is strictly decreasing we have that the following inequality holds on $A$,
\begin{equation*}
\E\Big[\ti{U}\!\left(\tfrac{\hat{Y}_t}{\bar{Y}_t}\,\bar{Y}_T\right)\Big|\,\mc{F}_t\Big]<
\E\!\left[\ti{U}\big(\bar{Y}_T\big)\Big|\,\mc{F}_t\right]=\E\Big[\ti{U}(\hat{Y}_T)\Big|\,\mc{F}_t\Big].
\end{equation*}
Note that $\bar{Y}$ being a supermartingale $\bar{Y}_T>0$ implies that $\bar{Y}>0$.
We then define the process
\begin{equation*}
Y:=\hat{Y}\,\I{[0,t]}+\tfrac{\hat{Y}_t}{\bar{Y}_t}\,\bar{Y}\,\I{A}\,\I{(t,T]}+\hat{Y}\,\I{A^c}\,\I{(t,T]}.
\end{equation*}
It is now essential to show that $Y\in\mc{Y}^{add}(1)$ which holds by separately checking the respective cases due to the choice of $\mc{Y}^{add}(1)$ as a family of supermartingale measures for $S$, more precisely, 
$XY$ is a supermartingale for any admissible wealth process $X$. Note that here it is also important that the $Y$ constructed above is right-continuous at $t$. A similar computation as for the uniqueness of $\hat{X}$ then 
results in a contradiction. The processes $\hat{Y}$ and $\bar{Y}$ are c{\`a}dl{\`a}g 
and they satisfy $\hat{Y}_t=\bar{Y}_t$ a.s. for each $t\in[0,T]$. We then conclude that they are indistinguishable.
\end{proof}
The remaining items from Theorem \ref{thm_WZ09}, if not already implicitly contained in the previous proofs, can be deduced in a standard fashion so we omit the details.\\ \\
In the above study we relate the optimal processes from the utility maximization problem to solutions of quadratic BSDEs and rely on  
the fact that $\hat{\Psi}\in\mf{E}$. For the proof of the latter it turns out to be convenient to use a characterization of $\hat{\Psi}$, given by the so-called \emph{primal and dual opportunity process}. To this end first define the domain
\[\mc{Y}^*(y):=\mc{Y}^{add}(y)\cap\{Y>0\},\]
which in view of $\hat{Y}^1\in\mc{Y}^*(1)$ does not affect the optimizers. In addition define, for $t\in[0,T]$, the continuation strategies
\begin{equation*}
\mc{Y}^*(Y,t) = \{\ti{Y}\in\mc{Y}^*(y):\ti{Y}=Y \text{ on } [0,t]\}.
\end{equation*}
We then can state
\begin{prop}[Nutz \cite{Nu109} Proposition 3.1]
There is a unique c{\`a}dl{\`a}g semimartingale $L^{\textit{op}}$, the opportunity process, such that for any admissible strategy $\nu\in\mc{A}_\U$ and $t\in[0,T]$
\begin{equation}\label{UtDynLt}
L^{\textit{op}}_t\,U(X_t^\nu)=\esssup_{\substack{\check{\nu}\in\,\mc{A}_{\U,\nu}}}\E\!\left[U\!\Big(X_T^{\check{\nu}}\Big)\Big|\,\mc{F}_t\right],
\end{equation}
where the optimization is over all the continuation strategies $\check{\nu}\in\,\mc{A}_{\U,\nu}$ for $\nu$, i.e. over all the admissible strategies $\check{\nu}$ that are equal to $\nu$ on $[0,t]$. If $(\hat{X},\hat{Y})$ denotes the optimal pair for the utility maximization problem satisfying $\hat{Y}_0=u'(\hat{X}_0)$ then $\hat{Y}=L^{\textit{op}}\,U'(\hat{X})$. In particular, $\hat{\Psi}=\log(L^{\textit{op}}).$
\end{prop}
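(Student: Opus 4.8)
The plan is to follow the argument of Nutz \cite{Nu109}, checking that it carries over under the weaker Assumption \ref{ass_expmom}, and then to read off the identity $\hat{Y}=L^{\textit{op}}U'(\hat{X})$ from Theorem \ref{thm_WZ09}(iii).

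\textbf{Step 1 (dynamic programming and factorization).} Fix $t\in[0,T]$ and $\nu\in\mc{A}_\U$ and set $V_t(\nu):=\esssup_{\check\nu\in\mc{A}_{\U,\nu}}\E[U(X_T^{\check\nu})\mid\mc{F}_t]$. For $\check\nu\in\mc{A}_{\U,\nu}$ one has $X_t^{\check\nu}=X_t^\nu$ and, by the multiplicativity of the stochastic exponential in \eqref{wealth}, the ratio $X_T^{\check\nu}/X_t^\nu$ is, conditionally on $\mc{F}_t$, the terminal value of a wealth process started at $1$ and driven by the restriction of $\check\nu$ to $(t,T]$. Since $U(xy)=x^pU(y)$ for $x,y>0$, this gives $U(X_T^{\check\nu})=(X_t^\nu)^pU(X_T^{\check\nu}/X_t^\nu)$, and pulling the strictly positive $\mc{F}_t$-measurable factor $(X_t^\nu)^p$ out of the essential supremum yields $V_t(\nu)=L^{\textit{op}}_t\,U(X_t^\nu)$ for a process $L^{\textit{op}}$. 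That $L^{\textit{op}}$ does not depend on $\nu$ is where Assumption \ref{asscones} enters: since $\U$ is a predictably measurable cone, the collection of admissible continuation increments over $(t,T]$ is independent of the part of the strategy used on $[0,t]$, so the essential supremum defining $L^{\textit{op}}_t$ is intrinsic. Finiteness of $u(x)$ (Proposition \ref{prop_EMM}) together with the usual pasting/lattice property of $\mc{A}_{\U,\nu}$ shows that $t\mapsto V_t(\nu)$ is a supermartingale, hence admits a c{\`a}dl{\`a}g modification; dividing by the continuous, strictly positive process $U(X^\nu)$ gives a c{\`a}dl{\`a}g semimartingale version of $L^{\textit{op}}$, and evaluating $L^{\textit{op}}_t=V_t(\hat\nu)/U(\hat X_t)$ along the optimizer $\hat\nu$ of Theorem \ref{thm_WZ09}(i) shows $L^{\textit{op}}>0$. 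Uniqueness up to indistinguishability is immediate from $L^{\textit{op}}=V(\nu)/U(X^\nu)$ with, say, $\nu\equiv0$. This establishes \eqref{UtDynLt}.

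\textbf{Step 2 (identification with the dual optimizer).} Let $(\hat X,\hat Y)$ be the optimal pair with $\hat Y_0=u'(\hat X_0)=u'(x)$. The supermartingale $M:=V(\hat\nu)=L^{\textit{op}}U(\hat X)$ satisfies $M_0=u(x)=\E[U(\hat X_T)]=\E[M_T]$, hence is a martingale and $M_t=\E[U(\hat X_T)\mid\mc{F}_t]$. By Theorem \ref{thm_WZ09}(iii), $\hat Y_T=U'(\hat X_T)=\hat X_T^{p-1}$ and $\hat X\hat Y$ is a martingale, so $U(\hat X_T)=\tfrac1p\hat X_T^p=\tfrac1p\hat X_T\hat Y_T$ gives
\[
L^{\textit{op}}_t\,U(\hat X_t)=\E\bigl[U(\hat X_T)\mid\mc{F}_t\bigr]=\tfrac1p\,\E\bigl[\hat X_T\hat Y_T\mid\mc{F}_t\bigr]=\tfrac1p\,\hat X_t\hat Y_t.
\]
Since $U(\hat X_t)=\tfrac1p\hat X_t^p$ and $\hat X_t>0$, this reads $\hat Y_t=L^{\textit{op}}_t\,\hat X_t^{p-1}=L^{\textit{op}}_t\,U'(\hat X_t)$ for all $t$, i.e. $\hat Y=L^{\textit{op}}U'(\hat X)$ up to indistinguishability.

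\textbf{Conclusion and main obstacle.} With $y=u'(x)$, Corollary \ref{cor_kappa} (equivalently the factorization $\mc{Y}(y)=y\mc{Y}(1)$) gives $\hat Y=\hat Y^{y}=u'(x)\hat Y^{1}$, whence $\hat\Psi=\log\bigl(u'(x)\hat Y^{1}/U'(\hat X)\bigr)=\log\bigl(\hat Y/U'(\hat X)\bigr)=\log L^{\textit{op}}$. The delicate point is Step 1: making the dynamic programming principle rigorous --- showing the family $\{\E[U(X_T^{\check\nu})\mid\mc{F}_t]\}_{\check\nu}$ is suitably directed (the direction depending on the sign of $p$) so that the essential supremum is attained along a sequence and inherits the supermartingale property, and verifying the $\nu$-independence of the factor $L^{\textit{op}}_t$, which relies crucially on the conic, predictably measurable structure of $\U$ in Assumption \ref{asscones}. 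The integrability bookkeeping needed for the supermartingale property and for the existence of a c{\`a}dl{\`a}g modification is supplied by the finiteness statement of Proposition \ref{prop_EMM}, which in turn rests on the exponential moments condition, Assumption \ref{ass_expmom}.
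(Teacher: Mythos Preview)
The paper does not give its own proof of this proposition: it is stated with the attribution ``Nutz \cite{Nu109} Proposition 3.1'' and no proof environment follows. Your proposal therefore cannot be compared against an in-paper argument, but it is precisely the standard derivation from \cite{Nu109} that the citation points to, and it is correct: the factorization in Step~1 via $U(xy)=x^pU(y)$ and the multiplicativity of the stochastic exponential is exactly how $L^{\textit{op}}$ is produced there, and your Step~2 computation $L^{\textit{op}}_tU(\hat X_t)=\E[U(\hat X_T)\mid\mc{F}_t]=\tfrac1p\,\E[\hat X_T\hat Y_T\mid\mc{F}_t]=\tfrac1p\,\hat X_t\hat Y_t$ from Theorem~\ref{thm_WZ09}(iii) cleanly yields $\hat Y=L^{\textit{op}}U'(\hat X)$.

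One small imprecision: you attribute the $\nu$-independence of $L^{\textit{op}}$ to the \emph{conic} structure of $\U$ in Assumption~\ref{asscones}. In fact the cone property is irrelevant for this step; what matters is only that the admissibility constraint $\nu\in\U$ is imposed pointwise in $(t,\omega)$, so that the set of restrictions $\check\nu|_{(t,T]}$ of continuation strategies does not depend on $\nu|_{[0,t]}$. The polyhedral cone hypothesis is used elsewhere in the paper (closedness of $\mc{X}^{add}(1)$ and the decomposition of the dual domain), but not here.
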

By our \emph{specific} choice of the dual domain, mimicking the proof of \cite{Nu109} Proposition 4.3, one can also show the following result.
\begin{prop}
There exists a unique c{\`a}dl{\`a}g process $\ti{L}^\textit{op}$, the dual opportunity process, such that for any $Y\in\mc{Y}^*(y)$ and $t\in[0,T]$
\begin{equation}\label{DualOpp}
\ti{L}^{\textit{op}}_t\,\ti{U}(Y_t)=\essinf_{\substack{\check{Y}\in\mc{Y}^*(Y,t)}}\E\!\left[\ti{U}\!\Big(\check{Y}_T\Big)\Big|\,\mc{F}_t\right],
\end{equation}
Moreover, the minimum is attained at $Y=\hat{Y}$ and we have that $\ti{L}^{\textit{op}}=(L^{\textit{op}})^{\frac{1}{1-p}}$.
\end{prop}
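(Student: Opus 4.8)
The plan is to mimic the proof of \cite{Nu109} Proposition 4.3 for the primal opportunity process, adapted to our family $\mc{Y}^*$ of strictly positive supermartingale densities.

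First I would fix $y>0$, $t\in[0,T]$ and show that for $Y\in\mc{Y}^*(y)$ the family $\{\E[\ti{U}(\check{Y}_T)\,|\,\mc{F}_t]:\check{Y}\in\mc{Y}^*(Y,t)\}$ is downward directed: given $\check{Y}^1,\check{Y}^2\in\mc{Y}^*(Y,t)$, set $A:=\{\E[\ti{U}(\check{Y}^1_T)\,|\,\mc{F}_t]\leq\E[\ti{U}(\check{Y}^2_T)\,|\,\mc{F}_t]\}\in\mc{F}_t$ and paste, $\check{Y}^3:=Y\I{[0,t]}+(\check{Y}^1\I{A}+\check{Y}^2\I{A^c})\I{(t,T]}$, which is right-continuous at $t$ and whose conditional expectation at time $t$ is the minimum of the two. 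The point --- this is where the \emph{specific} choice of dual domain is used --- is that $\check{Y}^3$ is again in $\mc{Y}^*(Y,t)$, because $\mc{Y}^{add}(1)$ is the class of densities $Z$ with $XZ$ a supermartingale for \emph{every} $X\in\mc{X}^{add}(1)$, and this property is preserved under $\mc{F}_t$-pasting by exactly the case-checking used in the uniqueness proof for $\hat{Y}$ above; strict positivity is obviously preserved. Consequently the essential infimum in \eqref{DualOpp} is the a.s.\ decreasing limit of a sequence in the family, hence well defined, and I would set $\ti{L}^{op}_t:=\ti{U}(Y_t)^{-1}\,\essinf_{\check{Y}\in\mc{Y}^*(Y,t)}\E[\ti{U}(\check{Y}_T)\,|\,\mc{F}_t]$. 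Using the same pasting argument together with the positive homogeneity $\ti{U}(cy)=c^{q}\ti{U}(y)$ (rescaling a continuation strategy for one $Y$ into one for another), one checks that $\ti{L}^{op}_t$ does not depend on the choice of $Y\in\mc{Y}^*(y)$.

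Next I would verify that the minimum in \eqref{DualOpp} is attained at $Y=\hat{Y}$: recall $\hat{Y}\in\mc{Y}^*(y)$ since $\hat{Y}_T>0$ forces $\hat{Y}>0$, and if some $\check{Y}\in\mc{Y}^*(\hat{Y},t)$ had $\E[\ti{U}(\check{Y}_T)\,|\,\mc{F}_t]<\E[\ti{U}(\hat{Y}_T)\,|\,\mc{F}_t]$ on a set $B\in\mc{F}_t$ of positive probability, then the process coinciding with $\hat{Y}$ on $[0,t]$ and with $\check{Y}\I{B}+\hat{Y}\I{B^c}$ on $(t,T]$ would lie in $\mc{Y}^*(y)$ and have strictly smaller dual value than $\hat{Y}$, contradicting its optimality; hence $\ti{L}^{op}_t\,\ti{U}(\hat{Y}_t)=\E[\ti{U}(\hat{Y}_T)\,|\,\mc{F}_t]$. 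Using $\hat{Y}_T=U'(\hat{X}_T)=\hat{X}_T^{\,p-1}$ one computes $\ti{U}(\hat{Y}_T)=-\hat{Y}_T^{\,q}/q=-\hat{X}_T^{\,p}/q=-(p/q)U(\hat{X}_T)$, so combining with $L^{op}_t\,U(\hat{X}_t)=\E[U(\hat{X}_T)\,|\,\mc{F}_t]$ (the case $\nu=\hat{\nu}$ of \eqref{UtDynLt}) gives $\ti{L}^{op}_t\,\ti{U}(\hat{Y}_t)=-(p/q)L^{op}_t\,U(\hat{X}_t)$, that is $\ti{L}^{op}_t\,\hat{Y}_t^{\,q}=L^{op}_t\,\hat{X}_t^{\,p}$. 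Since also $\hat{Y}_t=L^{op}_t\,U'(\hat{X}_t)=L^{op}_t\,\hat{X}_t^{\,p-1}$ by the preceding proposition, one has $\hat{Y}_t^{\,q}=(L^{op}_t)^{q}\hat{X}_t^{\,p}$, and substituting and cancelling $\hat{X}_t^{\,p}>0$ yields $\ti{L}^{op}_t=(L^{op}_t)^{1-q}=(L^{op}_t)^{1/(1-p)}$ because $1-q=1/(1-p)$; positivity of $L^{op}$, hence well-definedness of the power, comes again from \eqref{UtDynLt} with $X_t=\hat{X}_t>0$.

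Finally the càdlàg property and uniqueness: since $L^{op}$ is a càdlàg semimartingale by \cite{Nu109} Proposition 3.1 and $x\mapsto x^{1/(1-p)}$ is continuous on $(0,\infty)$, the process $(L^{op})^{1/(1-p)}$ is càdlàg, and the previous paragraph exhibits it as a version of $\ti{L}^{op}$; any càdlàg process satisfying \eqref{DualOpp} must agree with $\ti{U}(\hat{Y}^1_\cdot)^{-1}\essinf(\cdots)$ at each fixed time (taking $Y=\hat{Y}^1\in\mc{Y}^*(1)$), hence be indistinguishable from it. I expect the main obstacle to be the pasting/directedness step in the first two paragraphs: one must check carefully that gluing two strictly positive supermartingale densities along an $\mc{F}_t$-set keeps the result inside our \emph{specific} dual class $\mc{Y}^{add}$ and respects right-continuity at $t$, which is exactly the feature making the dual opportunity process exist here; the remaining arguments are routine manipulations with conditional expectations and power functions.
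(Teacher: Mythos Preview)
Your proposal is correct and follows exactly the approach the paper indicates: the paper does not spell out a proof but simply says ``By our \emph{specific} choice of the dual domain, mimicking the proof of \cite{Nu109} Proposition~4.3, one can also show the following result'', and you carry out precisely that mimicry, correctly isolating the pasting/directedness step as the place where the particular structure of $\mc{Y}^{add}$ is used and then deriving $\ti{L}^{\textit{op}}=(L^{\textit{op}})^{1/(1-p)}$ from $\hat{Y}=L^{\textit{op}}U'(\hat{X})$ by the power-function computation. One small wording slip: in \cite{Nu109} Proposition~4.3 is already the \emph{dual} opportunity process result (Proposition~3.1 is the primal one), so you are mimicking it directly rather than adapting a primal argument.
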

The previous two propositions allow us to prove the required estimates on $\hat{\Psi}=\log\Bigl(\frac{u'(x)\hat{Y}^1}{U'(\hat{X})}\Bigr)$.
\begin{lem}\label{PsiInE}
Let Assumptions \ref{ass_expmom} and \ref{asscones} hold, then $\hat{\Psi}\in\mf{E}$.
\end{lem}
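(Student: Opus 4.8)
The plan is to use the identification $\hat\Psi=\log L^{\textit{op}}$ together with the relation $\ti{L}^{\textit{op}}=(L^{\textit{op}})^{1/(1-p)}$ from the two preceding propositions, and to sandwich $L^{\textit{op}}$ between one trivial bound and one bound built from the minimal martingale measure density $Y^\lambda=\mc{E}(-\lambda\cdot M)$. The trivial bound comes from feeding the strategy $\nu\equiv0$ (admissible since $0\in\U$ $\mu^A$-a.e.\ by Assumption \ref{asscones}) and the continuation $\check\nu\equiv0$ into \eqref{UtDynLt}, which gives $L^{\textit{op}}_t\,U(x)\ge U(x)$ for all $t$, i.e.\ $L^{\textit{op}}_t\ge1$ if $p\in(0,1)$ and $L^{\textit{op}}_t\le1$ if $p<0$.

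For the complementary bound I would use the dual opportunity process. By Proposition \ref{prop_EMM} the process $Y^\lambda$ is a true martingale and the density of an equivalent local martingale measure, so $yY^\lambda\in\mc{Y}^{*}(y)$, and $(Y^\lambda_T)^q\in L^1(\mb{P})$ (for $p\in(0,1)$ this is the estimate in the proof of Proposition \ref{prop_EMM}, for $p<0$ it follows from $q\in(0,1)$ and $\E[Y^\lambda_T]=1$). Plugging $Y=yY^\lambda$ into \eqref{DualOpp} and using the continuation $\check Y\equiv yY^\lambda$ yields
\[\ti{L}^{\textit{op}}_t\,\ti{U}(yY^\lambda_t)\le\E\!\left[\ti{U}(yY^\lambda_T)\,\big|\,\mc{F}_t\right]=\ti{U}(yY^\lambda_t)\,\E\!\left[(Y^\lambda_T/Y^\lambda_t)^q\,\big|\,\mc{F}_t\right].\]
Dividing by $\ti{U}(yY^\lambda_t)$, whose sign is that of $-1/q$, and raising to the power $1-p>0$ gives $L^{\textit{op}}_t\le\E[(Y^\lambda_T/Y^\lambda_t)^q|\mc{F}_t]^{1-p}$ when $p\in(0,1)$ (here $q<0$) and $L^{\textit{op}}_t\ge\E[(Y^\lambda_T/Y^\lambda_t)^q|\mc{F}_t]^{1-p}$ when $p<0$ (here $q\in(0,1)$).

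It then remains to estimate $\E[(Y^\lambda_T/Y^\lambda_t)^q|\mc{F}_t]$. Write $Y^\lambda_T/Y^\lambda_t=\mc{E}(N^{(t)})_T$, where $N^{(t)}$ is the continuous martingale with $N^{(t)}_s=-\int_t^s\lambda^\tr\,dM$ for $s\ge t$ (a true martingale null at $t$, since $\E[\lo N^{(t)},N^{(t)}\ro_T^{1/2}]\le\E[\mc{L}^{1/2}]<+\infty$ with $\mc{L}:=\lo\lambda\cdot M,\lambda\cdot M\ro_T$), so that $(Y^\lambda_T/Y^\lambda_t)^q=\exp(qN^{(t)}_T-\tfrac{q}{2}\lo N^{(t)},N^{(t)}\ro_T)$ and $\lo N^{(t)},N^{(t)}\ro_T\le\mc{L}$. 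For $p\in(0,1)$ (so $q<0$) a Cauchy--Schwarz split of this quantity into $\mc{E}(2qN^{(t)})_T^{1/2}\exp((2q^2-q)\lo N^{(t)},N^{(t)}\ro_T)^{1/2}$, together with the supermartingale property of $\mc{E}(2qN^{(t)})$ (value $1$ at time $t$) and $2q^2-q>0$, gives $\E[(Y^\lambda_T/Y^\lambda_t)^q|\mc{F}_t]\le\E[\exp((2q^2-q)\mc{L})|\mc{F}_t]^{1/2}$, whence $1\le L^{\textit{op}}_t\le\E[\exp((2q^2-q)\mc{L})|\mc{F}_t]^{(1-p)/2}$. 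For $p<0$ (so $q\in(0,1)$), conditional Jensen applied to the exponential, using $\E[N^{(t)}_T|\mc{F}_t]=0$, gives $\E[(Y^\lambda_T/Y^\lambda_t)^q|\mc{F}_t]\ge\exp(-\tfrac{q}{2}\E[\mc{L}|\mc{F}_t])$, whence $\exp(-\tfrac{|p|}{2}\E[\mc{L}|\mc{F}_t])\le L^{\textit{op}}_t\le1$ (using $q(1-p)=|p|$). In both regimes $\exp(c\hat\Psi^*)=\exp(c\sup_{0\le t\le T}|\log L^{\textit{op}}_t|)$ is dominated by a fixed power of $\sup_{0\le t\le T}\E[\mc{G}|\mc{F}_t]$ for a fixed random variable $\mc{G}$ (namely $\mc{G}=\exp((2q^2-q)\mc{L})$, respectively $\mc{G}=\mc{L}$) which, by Assumption \ref{ass_expmom}, lies in $L^r(\mb{P})$ for every $r\ge1$. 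Doob's $L^r$ maximal inequality applied to the nonnegative (sub)martingale $(\E[\mc{G}|\mc{F}_t])_t$ then yields $\sup_{0\le t\le T}\E[\mc{G}|\mc{F}_t]\in L^r(\mb{P})$ for all $r\ge1$, and hence $\E[\exp(c\hat\Psi^*)]<+\infty$ for every $c>0$, i.e.\ $\hat\Psi\in\mf{E}$.

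The delicate point is the sign bookkeeping forced by the dichotomy $p\in(0,1)$ versus $p<0$: the roles of $L^{\textit{op}}$ and $\ti{L}^{\textit{op}}$, the direction in which inequalities flip when dividing by $\ti{U}(yY^\lambda_t)$ and when exponentiating, and the sign of $2q^2-q$ all switch between the two regimes, so each has to be run separately. The remaining points requiring care are that $yY^\lambda$ genuinely lies in the dual domain $\mc{Y}^{*}(y)$ and that $(Y^\lambda_T)^q$ is $\mb{P}$-integrable --- both supplied by (the proof of) Proposition \ref{prop_EMM} --- and that $N^{(t)}$ is a true martingale, so that the centering in the conditional Jensen step is legitimate.
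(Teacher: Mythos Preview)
Your overall strategy is sound, and in one respect cleaner than the paper's, but there is a genuine slip in the final step of the $p<0$ case. From your bound $|\hat\Psi_t|\le\tfrac{|p|}{2}\,\E[\mc{L}\mid\mc{F}_t]$ you get
\[
\exp(c\hat\Psi^*)\le\exp\!\Big(\tfrac{c|p|}{2}\sup_{0\le t\le T}\E[\mc{L}\mid\mc{F}_t]\Big),
\]
which is \emph{not} a power of $\sup_t\E[\mc{L}\mid\mc{F}_t]$ but its exponential; and ``$\sup_t\E[\mc{L}\mid\mc{F}_t]\in L^r$ for all $r$'' does not by itself yield exponential moments of that supremum. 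The repair is immediate: apply conditional Jensen once more, $\exp\!\big(\tfrac{c|p|}{2}\E[\mc{L}\mid\mc{F}_t]\big)\le\E\!\big[\exp(\tfrac{c|p|}{2}\mc{L})\,\big|\,\mc{F}_t\big]$, so that $\exp(c\hat\Psi^*)\le\sup_t\E[\mc{G}_c\mid\mc{F}_t]$ with $\mc{G}_c:=\exp(\tfrac{c|p|}{2}\mc{L})\in L^r$ for every $r\ge1$ by Assumption~\ref{ass_expmom}; now Doob's $L^r$ maximal inequality finishes the job exactly as you intended. Your $p\in(0,1)$ case is correct as written.

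On the method: the paper treats the two regimes asymmetrically, bounding $L^{\textit{op}}$ via the \emph{primal} characterization \eqref{UtDynLt} together with the Fenchel--Young inequality $U(x)\le\ti U(y)+xy$ when $p\in(0,1)$, and via the dual opportunity process when $p<0$; in both cases it employs a tuned H\"older split with specially chosen exponents $\beta,\varrho$ and constants $k_q$. You instead route both regimes through $\ti L^{\textit{op}}$ and the single test element $Y^\lambda$, using plain Cauchy--Schwarz for $p\in(0,1)$ and a Jensen lower bound for $p<0$. This is more uniform and avoids the constant-chasing; the paper's optimized exponents buy nothing here since Assumption~\ref{ass_expmom} supplies exponential moments of all orders anyway.
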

\begin{proof}
Let $p\in(0,1)$ so that $q=\frac{p}{p-1}\in(-\infty,0)$ and $L^\textit{op}\geq 1$. The last inequality follows from \eqref{UtDynLt} by using the strategy $\nu\equiv0$. 
In particular $\hat{\Psi}\geq0$ and we notice that for all
$\delta>0$
\begin{equation}\label{expPsiLdelta}
\E\!\left[\exp\!\left(\delta\hat{\Psi}^*\right)\!\right]=\E\Bigg[\sup_{\substack{0\leq t\leq T}}\left(\exp\!\left(\delta\hat{\Psi}_t\right)\!\right)\Bigg]=\E\!\left[\Big(\big(L^\textit{op}\big)^\delta\Big)^*\right].
\end{equation}
In what follows the constant $c_{p,\delta}>0$ is generic, depends on $p$ and $\delta$ and may change from line to line. Let us consider an exponential moment of $\lo\lambda\cdot M,\lambda\cdot M\ro_T$ of order $k>k_q:=q^2-\frac{q}{2}-q\sqrt{q^2-q}$. We now set $\beta:=1-\frac{1}{q}\sqrt{q^2-q}>1$, $\varrho:=\beta/(\beta-1)>1$ and $\delta:=k\varrho/k_q>1$. After defining $Y^\lambda:=\mc{E}(-\lambda\cdot M)$ 
we deduce from \eqref{UtDynLt} that for a fixed strategy $\nu\in\mc{A}_\U$, denoting by $\check{\nu}$ a
time-$t$ continuation strategy of $\nu$,
\begin{align*}
\big(L_t^\textit{op}\big)^\delta&
\leq
p^\delta\esssup_{\substack{\check{\nu}\in\,\mc{A}_{\U,\nu}}}\!\bigg(\E\!\left[\ti{U}\!\Big(Y_T^{\lambda}\!\big/Y_t^{\lambda}\Big)\Big|\,\mc{F}_t\right]+
\E\!\left[\Big(X_T^{\check{\nu}}\!\big/X_t^{\check{\nu}}\Big)\Big(Y_T^{\lambda}\!\big/Y_t^{\lambda}\Big)\Big|\,\mc{F}_t\right]
\bigg)^\delta\\&\leq
 c_{p,\delta}\,\E\bigg[\mc{E}(-\beta q\lambda\cdot
M)_{t,T}^{1/\beta}\,
\exp\!\Big(k_q\lo\lambda\cdot M,\lambda\cdot M\ro_{t,T}\Big)^{1/\varrho}\bigg|\,\mc{F}_t\bigg]^\delta\!+ c_{p,\delta}\\
&\leq c_{p,\delta}\,
\E\bigg[\exp\!\Big(k_q\lo\lambda\cdot M,\lambda\cdot M\ro_T\Big)\,\bigg|\,\mc{F}_t\bigg]^{\delta/\varrho}\!+ c_{p,\delta}
=:c_{p,\delta}
\left(\chi_t^{\delta/\varrho}+1\right),
\end{align*}
by making use of the definition of $\ti{U}$, the supermartingale property of $Y^\lambda X^{\check{\nu}}$ and $\mc{E}(-\beta q\lambda\cdot M)$,
H\"{o}lder's inequality and the positiveness of $-{1}/{q}$ and $k_q$. 
Thanks to the assumption on the exponential moment of $\lo\lambda\cdot M,\lambda\cdot M\ro_T$, the process $\chi$ is a (nonnegative) martingale on $[0,T]$ and thus amenable to Doob's inequality from which the result follows.

Let us now turn to the case of $p<0$, i.e. when $q=\frac{p}{p-1}\in(0,1)$ and $0<L^\textit{op}\leq 1$. Take an exponential moment of $\lo \lambda\cdot M,\lambda\cdot M\ro_T$ of order $k>(1-p)k_q>k_q:=q^2+\frac{q}{2}+\sqrt{q^2+q}$. We define $\delta:=\frac{k\varrho}{(1-p)k_q}>1$ where $\beta:=1+\frac{1}{q}\sqrt{q^2+q}>1$ and $\varrho:=\beta/(\beta-1)>1$. Then
\begin{align*}
\E\!\left[\exp\!\left(\delta\hat{\Psi}^*\right)\!\right]&=
\E\Bigg[\!\!\left(\exp\!\left(\delta\sup_{\substack{0\leq t\leq T}}\left(-\hat{\Psi}_t\right)\right)\!\right)\Bigg]=
\E\left[\left(\big(\ti{L}^\textit{op}\big)^{-\delta(1-p)}\right)^*\right]\\&\leq\E\bigg[\exp\!\Big(k_q\lo\lambda\cdot M,\lambda\cdot M\ro_T\Big)\,\bigg|\,\mc{F}_t\bigg]^{\delta(1-p)/\varrho}\!\!,
\end{align*}
where $\ti{L}^\textit{op}$ is the dual opportunity process. The claim can then again be deduced from Doob's inequality.
\end{proof}


\section{Semimartingale BSDEs Under Exponential Moments}\label{appendBSDE}
In this appendix we summarize the existence, uniqueness and stability results for quadratic semimartingale BSDEs under exponential moments as described in \cite{MW10} to which we refer for proofs. Note that these results generalize the corresponding results provided in Briand and Hu \cite{BH08} for the Brownian framework. Let us consider the BSDE on $[0,T]$,
\begin{equation}
d\Psi_t= Z_t^\tr\,dM_t + dN_t -F(t,Z_t)\,dA_t-\frac{1}{2}\,d\lo N,N\ro_t ,\quad \Psi_T=\xi,\label{BSDEF}
\end{equation}
where $F:[0,T]\times\Omega\times\mb{R}^d\to\mb{R}$ is a random predictable function and $\xi$ is an $\mc{F}_T$-measurable random variable.
A \emph{solution} to the BSDE \eqref{BSDEF} is defined as in Definition \ref{DefnBSDESol}. We require the following assumption.
\begin{ass}\label{assBSDE}
There exist positive numbers $\gamma\geq 1$ and $\delta$ together with an $M$-integrable $\mb{R}^d$-valued process $\ti{\lambda}$ so that for
\begin{equation*}
\alpha:=\|B\ti{\lambda}\|^2 \text{ and } \,|\alpha|_1:=\int_0^T\alpha_t\,dA_t=\int_0^T\ti{\lambda}^\tr_t\,d\lo M,M\ro_t\ti{\lambda}_t
\end{equation*}
we have (a.s. when appropriate)
\begin{enumerate}
\item The random variable $|\xi|+|\alpha|_1$ has exponential moments of all orders.
\item For all $t\in[0,T]$ the driver $z\mapsto F(t,z)$ is continuous and convex in $z$.
\item The generator $F$ satisfies a quadratic growth condition in $z$, i.e. for all $t$ and $z$ we have
\begin{equation}
|F(t,z)|\leq \alpha_t+\frac{\gamma}{2}\|B_tz\|^2.\label{AssGrowF}
\end{equation}
\item The function $F$ is locally Lipschitz in $z$, i.e. for all $t, z_1$ and $z_2$
\begin{equation*}
|F(t,z_1)-F(t,z_2)|\leq \delta \Big(\|B_t\ti{\lambda}_t\| +\|B_tz_1\|+\|B_tz_2\|\Big)\|B_t(z_1-z_2)\|.\label{AssLocLipF}
\end{equation*}
\end{enumerate}
If this assumption is satisfied we refer to \eqref{BSDEF} as BSDE$(F,\xi)$ with the set of parameters $(\alpha,\gamma,\delta)$.
\end{ass}
The following two results collect the key results used in the present article. 
\begin{thm}
\label{thmMW10} Suppose Assumption \ref{assBSDE} holds.
\begin{enumerate}
\item If $(\Psi,Z,N)$ solves the BSDE \eqref{BSDEF} with $\Psi\in\mathfrak{E}$ then $Z\cdot M$ and $N$ are in $\mc{M}^\rho$ for $\rho\geq1$.
\item If $(\Psi,Z,N)$ and $(\Psi',Z',N')$ are both solutions to the BSDE \eqref{BSDEF} with $\Psi,\Psi'\in\mathfrak{E}$ then $\Psi$ and $\Psi'$, $Z\cdot M$ and $Z'\cdot M$ as well as $N$ and $N'$ are indistinguishable.
\end{enumerate}
\end{thm}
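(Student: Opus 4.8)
The plan is to prove (i) first and then obtain (ii) from it, following the scheme of \cite{MW10} (a semimartingale version of the argument of \cite{BH08}); note that (i) uses only the quadratic growth bound \eqref{AssGrowF}, whereas (ii) is where the convexity in Assumption \ref{assBSDE}(ii) becomes essential.

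For (i), let $(\Psi,Z,N)$ solve \eqref{BSDEF} with $\Psi\in\mf E$. The key is to apply It\^{o}'s formula to $u(\Psi)$, where $u(x):=\gamma^{-2}\big(e^{\gamma|x|}-\gamma|x|-1\big)$ is the convex $C^2$ function tailored so that $u''-\gamma|u'|\equiv1$, $|u'|\le\gamma^{-1}e^{\gamma|\cdot|}$ and $0\le u(x)\le\gamma^{-2}e^{\gamma|x|}$. Since $M$ and $N$ are continuous and orthogonal we have $d\lo\Psi\ro_t=\|B_tZ_t\|^2dA_t+d\lo N\ro_t$, so \eqref{AssGrowF}, the tailoring identity and $\gamma\ge1$ give, after a short computation, that the $dA_t$-drift of $u(\Psi_t)$ dominates $\tfrac12\|B_tZ_t\|^2dA_t-|u'(\Psi_t)|\alpha_t\,dA_t$ and the $d\lo N\ro_t$-drift dominates $(2\gamma)^{-1}d\lo N\ro_t$. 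Integrating over $[0,T]$ and rearranging yields
\[
\tfrac12\!\int_0^T\!\|B_tZ_t\|^2dA_t+\tfrac1{2\gamma}\lo N\ro_T\le u(\xi)+\!\int_0^T\!|u'(\Psi_t)|\alpha_t\,dA_t-\!\int_0^T\!u'(\Psi_t)\big(Z_t^\tr dM_t+dN_t\big).
\]
I would first localise along stopping times reducing $\lo Z\cdot M\ro+\lo N\ro$ to bounded processes, making the last term a true martingale of zero expectation; since $u(\xi)\le\gamma^{-2}e^{\gamma|\xi|}$ and $\int_0^T|u'(\Psi_t)|\alpha_t dA_t\le\gamma^{-1}e^{\gamma\Psi^*}|\alpha|_1$, Assumption \ref{assBSDE}(i) together with $\Psi\in\mf E$ and Cauchy--Schwarz bound the right-hand side in $L^1$ uniformly in the localisation, and monotone convergence gives $Z\cdot M,N\in\mc M^2$. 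For arbitrary $\rho\ge1$ one reruns the same identity, estimates the stochastic-integral term by Burkholder--Davis--Gundy and Cauchy--Schwarz, and notes that the resulting inequality $J\le C_1+C_2\sqrt J$ for $J:=\E\big[(\lo Z\cdot M\ro_T+\lo N\ro_T)^{\rho/2}\big]$ is self-improving (finiteness of $J$ at each localisation stage being already known). This part is essentially routine once the exponential moments are available.

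For (ii), let $(\Psi,Z,N)$ and $(\Psi',Z',N')$ be two solutions with $\Psi,\Psi'\in\mf E$; by (i) all four martingales $Z\cdot M$, $N$, $Z'\cdot M$, $N'$ lie in every $\mc M^\rho$, which is exactly what makes the manipulations below rigorous. It suffices to show $\Psi\le\Psi'$ up to indistinguishability, the reverse following by symmetry. For $\theta\in(0,1)$ put $\Psi^\theta:=(1-\theta)^{-1}(\Psi-\theta\Psi')$, with corresponding $Z^\theta,N^\theta$; then $\Psi^\theta_T=\xi$, convexity of $F(t,\cdot)$ gives $(1-\theta)^{-1}\big(F(t,Z_t)-\theta F(t,Z_t')\big)\le F(t,Z^\theta_t)$, and the elementary bound $\lo N-N'\ro\ge0$ gives $(1-\theta)^{-1}\big(\lo N\ro-\theta\lo N'\ro\big)\le\lo N^\theta\ro$. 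Hence the finite-variation term subtracted in the equation for $\Psi^\theta$ is $\le F(t,Z^\theta_t)dA_t+\tfrac12d\lo N^\theta\ro_t$, i.e. $\Psi^\theta$ is a \emph{subsolution} of BSDE$(F,\xi)$. I would then compare $\Psi^\theta$ with the genuine solution $\Psi$: applying It\^{o} to $e^{\beta A_t}g\big((\Psi^\theta_t-\Psi_t)^+\big)$ for a suitable convex $g$ and large $\beta$, the first-order-in-$Z$ increments are controlled via the local Lipschitz bound \eqref{AssLocLipF} and absorbed by Young's inequality into the quadratic variation $\|B_t(Z^\theta_t-Z_t)\|^2dA_t+d\lo N^\theta-N\ro_t$ and the $\beta$-term, while the $\mc M^\rho$-bounds and the exponential moments legitimise dropping the local martingale and passing to expectations; this yields an estimate of the form $\Psi^\theta_t\le\Psi_t+C(1-\theta)$ (or directly $\Psi^\theta\le\Psi$). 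Rearranging $(1-\theta)^{-1}(\Psi-\theta\Psi')\le\Psi+C(1-\theta)$ gives $\Psi-\Psi'\le C(1-\theta)^2/\theta$; letting $\theta\uparrow1$ yields $\Psi\le\Psi'$, and by symmetry $\Psi=\Psi'$ up to indistinguishability. Finally, equality of $\Psi$ and $\Psi'$ forces $(Z-Z')\cdot M+(N-N')$ to be a continuous local martingale of finite variation, hence identically zero; taking brackets against each $M^i$ and using orthogonality of $N,N'$ to $M$ gives $\int_0^T\|B_t(Z_t-Z_t')\|^2dA_t=0$, so $B(Z-Z')=0$ $\mu^A$-a.e., whence $(Z-Z')\cdot M\equiv0$ and then $N\equiv N'$.

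The hard part is the comparison step in (ii): for quadratic BSDEs uniqueness genuinely fails without convexity, and the $\theta$-device is precisely the mechanism that turns convexity into a usable inequality; making the subsolution-versus-solution comparison rigorous --- choosing $g$ and $\beta$, absorbing the locally Lipschitz terms, and controlling every constant uniformly enough in $\theta$ (despite the possible degeneracy of $B$) to survive $\theta\uparrow1$ --- is where the real effort lies. By contrast the difficulty in (i) is mild: it amounts to picking the exponential test function $u$ and running the localisation/bootstrap, for which the all-order exponential moments of $\xi$, $|\alpha|_1$ and $\Psi^*$ are exactly what is needed.
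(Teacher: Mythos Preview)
The paper does not itself supply a proof of this theorem; Appendix~\ref{appendBSDE} states the result and refers to \cite{MW10} for the argument. Your outline of (i) is correct and matches the method of that reference (and its Brownian antecedent \cite{BH08}): the tailored function $u$ with $u''-\gamma|u'|=1$, localisation to reach $\mc{M}^2$, and a BDG/Cauchy--Schwarz bootstrap for general $\mc{M}^\rho$.

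There is, however, a genuine gap in your treatment of (ii). Your setup of the $\theta$-device is correct, including the verification that $\Psi^\theta:=(1-\theta)^{-1}(\Psi-\theta\Psi')$ is a subsolution of BSDE$(F,\xi)$. The flaw is the comparison step: you propose to compare $\Psi^\theta$ with the solution $\Psi$, but $\Psi^\theta-\Psi=\tfrac{\theta}{1-\theta}(\Psi-\Psi')$ is a constant multiple of the very difference you are trying to control, so applying It\^{o} to $g\big((\Psi^\theta-\Psi)^+\big)$ is merely a rescaling of the direct attack on $(\Psi-\Psi')^+$ and extracts nothing from the $\theta$-construction. Moreover, the locally Lipschitz bound on $F(Z^\theta)-F(Z)$ involves $\|B(Z^\theta-Z)\|=\tfrac{\theta}{1-\theta}\|B(Z-Z')\|$, which blows up as $\theta\uparrow1$; there is no mechanism in your scheme for a factor of order $(1-\theta)$ to appear on the right-hand side. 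The way \cite{BH08} and \cite{MW10} actually close the argument is different: one does \emph{not} compare $\Psi^\theta$ with another solution, but establishes a $\theta$-independent \emph{a~priori upper bound} directly from the subsolution inequality and the growth condition \eqref{AssGrowF}. Applying It\^{o} to $\exp(\gamma\Psi^\theta)$ shows that $t\mapsto\exp\!\big(\gamma\Psi^\theta_t+\gamma\!\int_0^t\alpha_s\,dA_s\big)$ is a local submartingale (here $\gamma\ge1$ handles the $d\lo N^\theta\ro$ term); since $\Psi^\theta\in\mf{E}$ for each fixed $\theta$, this upgrades to a true submartingale and yields $\Psi^\theta_t\le\Phi_t:=\gamma^{-1}\log\E\!\big[e^{\gamma\xi+\gamma|\alpha|_1}\,\big|\,\mc{F}_t\big]$, a bound manifestly independent of $\theta$. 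From $(1-\theta)^{-1}(\Psi-\theta\Psi')\le\Phi$ one obtains $\Psi-\Psi'\le(1-\theta)(\Phi-\Psi')$, and letting $\theta\uparrow1$ gives $\Psi\le\Psi'$; symmetry then yields $\Psi=\Psi'$. Your final step, deducing $(Z-Z')\cdot M\equiv0$ and $N\equiv N'$ from $\Psi\equiv\Psi'$, is correct.
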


\begin{thm}[Stability]\label{ThmStab}
Consider a family of BSDEs($F^n,\xi^n$) for $n\in\mb{N}_0$ for which Assumption
\ref{assBSDE} holds with parameters $(\alpha^n,\gamma,\delta)$. Assume that the
exponential moments assumption holds uniformly
in $n$, i.e. for all $c>0$,
\begin{equation*}
\sup_{\substack{n\geq 0}}\E\!\left[e^{c\,(|\xi^n|+|\alpha^n|_1)}\right]<+\infty.
\end{equation*}
If for $n\geq0$ $(\Psi^n,Z^n,N^n)$ is the solution to the BSDE($F^n,\xi^n$)  with $\Psi\in\mathfrak{E}$ and if
\begin{equation}
|\xi^n-\xi^0| + \int_0^T\big|F^n-F^0\big|\,(s,\Psi_s^0,Z_s^0)\,dA_s\longrightarrow 0 \quad\text{ in
probability, as }n\to+\infty,
\end{equation}
then for each $\rho\geq1$ as $n\to+\infty$
\begin{gather*}
\lim_{{n}\to+\infty}\E\!\left[\exp\!\left(\rho\left(\Psi^{n}-\Psi^0\right)^*\right)\right]=1,\\
\lim_{{n}\to+\infty}\E\!\left[\left( \big\lo(Z^{n}-Z^0)\cdot M,(Z^{n}-Z^0)\cdot M\big\ro_T
+\lo N^{n}-N^0,N^{n}-N^0\ro_T\right)^{\rho/2}\right]=0,
\end{gather*}
\end{thm}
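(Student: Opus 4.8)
The plan is: first extract a priori estimates that are \emph{uniform in $n$}, then estimate the BSDE solved by the difference of the two solutions, handling the quadratic nonlinearity through the convexity--based $\theta$-technique rather than a Gronwall argument; this is the scheme of \cite{MW10}, which in turn extends \cite{BH08}. For Step 1, applying It\^o's formula to $\exp(\pm\gamma\Psi^n+\gamma\int_0^{\cdot}\alpha^n_s\,dA_s)$ and using the quadratic growth \eqref{AssGrowF} with the \emph{common} constant $\gamma$ shows these are local submartingales, so Doob's maximal inequality together with the uniform exponential moments hypothesis gives $\sup_{n}\E\!\left[\exp\!\left(c\,(\Psi^n)^*\right)\right]<+\infty$ for every $c>0$; the finer a priori estimates of \cite{MW10}, whose constants depend only on $(\gamma,\delta)$ and on the exponential moments of $|\xi^n|+|\alpha^n|_1$, then show that $\lo Z^n\cdot M,Z^n\cdot M\ro_T$ and $\lo N^n,N^n\ro_T$ themselves have exponential moments of all orders, uniformly in $n$. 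This uniformity in $n$ is the first delicate point.

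\textbf{Step 2 (the difference BSDE, the $\theta$-technique, and the first limit).} Put $\delta\Psi:=\Psi^n-\Psi^0$, $\delta Z:=Z^n-Z^0$, $\delta N:=N^n-N^0$. Subtracting the two equations and using $\tfrac12\big(\lo N^n,N^n\ro-\lo N^0,N^0\ro\big)=\tfrac12\lo\delta N,\delta N\ro+\lo\delta N,N^0\ro$, one finds that $\delta\Psi$ solves a BSDE with terminal datum $\xi^n-\xi^0$, drift involving $F^n(t,Z^n_t)-F^0(t,Z^0_t)$ together with $d\lo\delta N,N^0\ro$, and the intrinsic term $-\tfrac12\,d\lo\delta N,\delta N\ro$. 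To show $(\delta\Psi)^*\to0$ in every $L^\rho(\mb{P})$ I would invoke the convexity of the generators (Assumption \ref{assBSDE}(ii)) through the $\theta$-technique of \cite{BH08,MW10}: for $\theta\in(0,1)$ convexity bounds the generator of the BSDE for $\Psi^n-\theta\Psi^0$ from above by $\theta\,\Delta F^n+(1-\theta)\alpha^n+\tfrac{\gamma}{2(1-\theta)}\|B(Z^n-\theta Z^0)\|^2$, with $\Delta F^n:=F^n(\cdot,Z^0)-F^0(\cdot,Z^0)$, so that the exponential estimate of Step 1 applies with $\gamma$ replaced by $\gamma/(1-\theta)$; letting $n\to+\infty$ for fixed $\theta$ and then $\theta\to1$, and symmetrising the roles of $n$ and $0$ for the reverse inequality, yields the claim. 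The driving inputs are $|\xi^n-\xi^0|\to0$ and $\int_0^T|\Delta F^n_s|\,dA_s\to0$ in $\mb{P}$-probability, each with exponential moments of all orders uniformly in $n$ (using \eqref{AssGrowF}, the uniform exponential moments hypothesis, and the exponential integrability of $\lo Z^0\cdot M,Z^0\cdot M\ro_T$ for the fixed limiting equation). Combined with $\sup_n\E[\exp(c(\delta\Psi)^*)]<+\infty$ from Step 1, a Vitali argument upgrades $(\delta\Psi)^*\to0$ in probability to $\E\!\left[\exp\!\left(\rho(\delta\Psi)^*\right)\right]\to1$, the first assertion. The passage to the limit $\theta\to1$, along which $\gamma/(1-\theta)\to+\infty$, is the second delicate point.

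\textbf{Step 3 (the martingale parts).} Finally I would apply It\^o's formula to $|\delta\Psi|^2$; taking expectations, using the orthogonality $\lo M,N^n\ro\equiv\lo M,N^0\ro\equiv0$, splitting $F^n(\cdot,Z^n)-F^0(\cdot,Z^0)=\big(F^n(\cdot,Z^n)-F^n(\cdot,Z^0)\big)+\Delta F^n$ and estimating the first bracket by the local Lipschitz property (Assumption \ref{assBSDE}(iv)), then absorbing a small multiple of $\lo\delta Z\cdot M,\delta Z\cdot M\ro_T+\lo\delta N,\delta N\ro_T$ on the left via Young's inequality, one arrives at
\[
\E\big[\lo\delta Z\cdot M,\delta Z\cdot M\ro_T+\lo\delta N,\delta N\ro_T\big]\ \lesssim\ \E\big[|\xi^n-\xi^0|^2\big]+\E\Big[\big(1+(\delta\Psi)^*\big)\,(\delta\Psi)^*\,\Theta^n\Big],
\]
where $\Theta^n$ collects $\int_0^T|\Delta F^n_s|\,dA_s$, $|\alpha^n|_1$, $\lo Z^n\cdot M,Z^n\cdot M\ro_T$, $\lo Z^0\cdot M,Z^0\cdot M\ro_T$, $\lo N^n,N^n\ro_T$ and $\lo N^0,N^0\ro_T$. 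The right-hand side tends to $0$ by Cauchy--Schwarz: $\E[|\xi^n-\xi^0|^2]\to0$, while $(\delta\Psi)^*\to0$ in every $L^\rho(\mb{P})$ by Step 2 and $\Theta^n$ is bounded in every $L^\rho(\mb{P})$ uniformly in $n$ by Step 1. This gives the second assertion for $\rho=2$; the general $\rho\ge1$ case follows from the same identity together with the uniform $L^\rho$-bounds of Step 1 and a Vitali argument, as in Step 2.
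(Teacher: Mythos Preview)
The paper does not actually prove this theorem: Appendix~B states the result and refers to \cite{MW10} for the proof. Your sketch follows precisely the scheme of that reference (the $\theta$-technique of \cite{BH08} adapted to the semimartingale setting), so in outline it coincides with what the paper invokes.

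There is, however, one overclaim you should be careful about. In Step~1 you assert that the a~priori estimates of \cite{MW10} give \emph{exponential} moments of all orders for $\lo Z^n\!\cdot M,Z^n\!\cdot M\ro_T$ and $\lo N^n,N^n\ro_T$. The result quoted in the paper (Theorem~\ref{thmMW10}(i)) gives only $Z^n\!\cdot M,\,N^n\in\mc{M}^\rho$ for all $\rho\geq1$, i.e.\ $L^\rho$-bounds on the quadratic variations, not exponential integrability; the latter is not available in general for quadratic BSDEs under an exponential-moments assumption on the data. You then use this in Step~2 to claim that $\int_0^T|\Delta F^n_s|\,dA_s$ has exponential moments of all orders (via the quadratic growth bound $|\Delta F^n|\le\alpha^n+\alpha^0+\gamma\|BZ^0\|^2$), which is needed for the conditional-expectation estimate after the exponential transform with parameter $\gamma/(1-\theta)$. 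As stated, this step is not justified. The argument in \cite{MW10} handles the $\theta$-passage more carefully, without requiring exponential integrability of $\lo Z^0\!\cdot M,Z^0\!\cdot M\ro_T$: one works with the positive and negative parts separately, uses $(\xi^n-\theta\xi^0)^+\le|\xi^n-\xi^0|+(1-\theta)|\xi^0|$, and controls the $\int|\Delta F^n|$ contribution through its convergence to zero in probability together with the uniform $L^\rho$-bounds, rather than via a blanket exponential-moment bound. Step~3 is fine as written, since there you only use the uniform $L^\rho$-integrability of $\Theta^n$, which \emph{is} available.
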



\section{Set Valued Analysis}\label{appendKur}
In this appendix we provide the necessary definitions from set valued analysis relevant to the present article. We fix a sequence $(\mc{J}^n)_{n\in\mb{N}}$ of closed and convex subsets of $\mathbb{R}^d$ and begin with the analogue of $\liminf$ and $\limsup$ for sets, cf. Aubin and Frankowska \cite{AF90}.

\begin{defn}
The \emph{upper limit} of the sequence $(\mc{J}^n)_{n\in\mb{N}}$ is the subset
\begin{align*}
\Limsup_{n\to+\infty}\,\mc{J}^n:=&\left\{ x\in\mb{R}^d\,\bigg|\,\liminf_{n\to+\infty}\mathrm{dist}(x,\mc{J}^n)=0\right\}\\
 = &\left\{ x\in\mb{R}^d\,\bigg|\,x \text{ a cluster point of an }(x_n)_{n\in\mb{N}},\text{ } x_n\in\mc{J}^n\text{ for all } n\in \mb{N}\right\}\!,
\end{align*}
where dist denotes the usual distance function from a set in $\mb{R}^d$. Similarly, the \emph{lower limit} of the sequence $(\mc{J}^n)_{n\in\mb{N}}$ is the subset
\begin{align*}
\Liminf_{n\to+\infty}\mc{J}^n:=&
\left\{ x\in\mb{R}^d\,\bigg|\,\lim_{n\to+\infty}\mathrm{dist}(x,\mc{J}^n)=0\right\}\\
=&\left\{ x\in\mb{R}^d\,\bigg|\,x =\lim_{n\to+ \infty}x_n,\text{ where } x_n\in\mc{J}^n\text{ for all } n\in \mb{N}\right\}.
\end{align*}
A set $\mc{J}$ is called the set \emph{limit} of the sequence $(\mc{J}^n)_{n\in\mb{N}}$ if the upper and lower limit sets coincide,
i.e.   
\begin{equation*}  
\mc{J} = \Limsup_{n\to+\infty}\mc{J}^n = \Liminf_{n\to+\infty}\mc{J}^n,
\end{equation*}
in which case we write $\mc{J}= \Lim_{n\to+\infty} \mc{J}^n$.
\end{defn}
We note that if $(\mc{J}^n)_{n\in\mb{N}}$ is a sequence of closed convex predictably measurable multivalued mappings then both $\Limsup_{n\to+\infty}\mc{J}^n$ and $\Liminf_{n\to+\infty}\mc{J}^n$ are convex predictably measurable multivalued mappings.

The following proposition shows that the above notion of set convergence implies pointwise convergence of the associated projections. In fact, according to Schochetman and Smith \cite{SS92} Theorem 3.3, everywhere pointwise convergence of the nearest point operators is \emph{equivalent} to the above set convergence, which is often called Kuratowski convergence in the literature. This motivates the choice of the Kuratowski convergence as an appropriate notion of convergence of sets. 
\begin{prop}[Schochetman and Smith \cite{SS92} Theorem 3.2]\label{kuraconv}
If $\Pi$ denotes the nearest point operator onto the indicated (closed and convex) set, then if the sequence $(\mc{J}^n)_{n\in\mb{N}}$ has a set limit denoted by $\mc{J}$ then the sequence $(\Pi_{\mc{J}^n})_{n\in\mb{N}}$ of mappings converges pointwise on $\mathbb{R}^d$ to $\Pi_{\mc{J}}$. 
\end{prop}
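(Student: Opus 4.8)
The plan is to deduce the pointwise convergence $\Pi_{\mc{J}^n}(x)\to\Pi_{\mc{J}}(x)$, for each fixed $x\in\mb{R}^d$, by a boundedness-plus-subsequence argument that exploits the two descriptions of $\Liminf$ and $\Limsup$ recorded in the definition above; this is exactly the content of \cite{SS92} Theorem 3.2, so I only sketch it. Write $\pi^n:=\Pi_{\mc{J}^n}(x)$; since each $\mc{J}^n$ (and $\mc{J}$) is closed, convex and nonempty — as holds for all the cones occurring in this paper — these nearest points are uniquely determined. The first step is to show that $(\pi^n)_{n\in\mb{N}}$ is bounded: because $\mc{J}=\Liminf_{n\to+\infty}\mc{J}^n$ one may fix $y\in\mc{J}$ and a selection $y_n\in\mc{J}^n$ with $y_n\to y$, whence $\|\pi^n-x\|\leq\|y_n-x\|$ and the right-hand side is bounded in $n$.

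Next I would take an arbitrary convergent subsequence $\pi^{n_k}\to z$ and identify its limit as $\Pi_{\mc{J}}(x)$. On the one hand, $\pi^{n_k}\in\mc{J}^{n_k}$ for all $k$, so $z$ is a cluster point of a selection from the $\mc{J}^n$ and the description of the upper limit gives $z\in\Limsup_{n\to+\infty}\mc{J}^n=\mc{J}$. On the other hand, for any $w\in\mc{J}=\Liminf_{n\to+\infty}\mc{J}^n$ there is a selection $w_n\in\mc{J}^n$ with $w_n\to w$, and passing to the limit in $\|\pi^{n_k}-x\|\leq\|w_{n_k}-x\|$ yields $\|z-x\|\leq\|w-x\|$; since this holds for every $w\in\mc{J}$ and $z\in\mc{J}$, uniqueness of the projection onto the closed convex set $\mc{J}$ forces $z=\Pi_{\mc{J}}(x)$. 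As $(\pi^n)_{n\in\mb{N}}$ is bounded and all its convergent subsequences share the limit $\Pi_{\mc{J}}(x)$, the full sequence converges to $\Pi_{\mc{J}}(x)$, and $x$ being arbitrary this is the claim.

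The argument is elementary and the only delicate point — the closest thing to an obstacle — is the boundedness step, which relies on $\mc{J}=\Liminf_{n\to+\infty}\mc{J}^n$ being nonempty; in the present context this is immediate because every cone considered contains the origin, so $0\in\Liminf_{n\to+\infty}\mc{J}^n$. One could alternatively invoke \cite{SS92} Theorem 3.2 directly, or its companion \cite{SS92} Theorem 3.3, which shows that Kuratowski convergence of the sets is in fact equivalent to everywhere pointwise convergence of the associated nearest point operators.
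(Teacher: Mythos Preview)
Your argument is correct and is essentially the standard proof of this result. Note, however, that the paper does not give its own proof of this proposition: it is stated with a direct citation to \cite{SS92} Theorem 3.2 and used as a black box. Your sketch therefore supplies more detail than the paper itself, and the remark that nonemptiness of $\mc{J}$ is automatic here because every cone contains the origin is a useful observation in the present context.
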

The final proposition shows that the alternative assumption given in Remark \ref{rmk_KK}
and used in \cite{K10} also leads to the appropriate convergence of the projections.
\begin{prop}
\label{prop_Kard}
Let the sequence $(\mc{J}^n)_{n\in\mb{N}}$ have a set limit denoted by $\mc{J}$ and suppose that $Q$ is a $d\times d$ matrix
such that $\ker(Q)\subseteq\mc{J}^n$ for all $n\in\mb{N}$ and $\ker(Q)\subseteq\mc{J}$. Then $Q\mc{J} = \Lim_{n\to+\infty} Q\mc{J}^n$.
\end{prop}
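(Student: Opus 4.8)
The plan is to establish the two set inclusions $Q\mc{J}\subseteq\Liminf_{n\to+\infty}Q\mc{J}^n$ and $\Limsup_{n\to+\infty}Q\mc{J}^n\subseteq Q\mc{J}$; since one always has $\Liminf_{n\to+\infty}Q\mc{J}^n\subseteq\Limsup_{n\to+\infty}Q\mc{J}^n$, these together force $Q\mc{J}=\Lim_{n\to+\infty}Q\mc{J}^n$. The first inclusion is immediate and uses only continuity of $Q$: given $y=Qx$ with $x\in\mc{J}$, the hypothesis $\mc{J}=\Lim_{n\to+\infty}\mc{J}^n$ yields points $x_n\in\mc{J}^n$ — for instance the metric projections $\Pi_{\mc{J}^n}(x)$ — with $x_n\to x$, whence $Qx_n\in Q\mc{J}^n$ and $Qx_n\to y$, so that $\mathrm{dist}(y,Q\mc{J}^n)\to0$.

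The real content is the reverse inclusion, and the first step I would take is to isolate the elementary convexity fact that makes the kernel hypothesis bite: if $C\subset\R^d$ is nonempty, closed and convex and $L\subset\R^d$ is a linear subspace with $L\subseteq C$, then $C+L=C$. Indeed, for $x\in C$, $v\in L$ and $t\geq1$ the point $(1-t^{-1})x+t^{-1}(tv)=(1-t^{-1})x+v$ lies in $C$ because $tv\in L\subseteq C$; letting $t\to+\infty$ and invoking closedness of $C$ gives $x+v\in C$, and applying this to $-v\in L$ as well shows $C\pm L=C$. Taking $C=\mc{J}^n$ and $L=\ker(Q)$ gives $\mc{J}^n+\ker(Q)=\mc{J}^n$; equivalently, writing $P$ for the orthogonal projection onto $\ker(Q)^\perp$, we have $Px\in\mc{J}^n$ whenever $x\in\mc{J}^n$, while $Q(Px)=Qx$ because $x-Px\in\ker(Q)$.

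With this in place the reverse inclusion runs as follows. Let $y\in\Limsup_{n\to+\infty}Q\mc{J}^n$; then along some subsequence there are $x_{n_k}\in\mc{J}^{n_k}$ with $Qx_{n_k}\to y$. Replacing $x_{n_k}$ by $Px_{n_k}$ — still a point of $\mc{J}^{n_k}$ by the previous step, and with unchanged image under $Q$ — I may assume $x_{n_k}\in\ker(Q)^\perp$ for every $k$. The restriction of $Q$ to $\ker(Q)^\perp$ is a linear bijection onto $\mathrm{im}(Q)$, hence a homeomorphism, so $x_{n_k}=\bigl(Q|_{\ker(Q)^\perp}\bigr)^{-1}(Qx_{n_k})$ converges to some $x\in\ker(Q)^\perp$ with $Qx=y$. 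Filling out $(x_{n_k})_k$ to a full sequence by picking arbitrary points of the nonempty sets $\mc{J}^n$ at the remaining indices exhibits $x$ as a cluster point of a sequence whose $n$-th term lies in $\mc{J}^n$, so $x\in\Limsup_{n\to+\infty}\mc{J}^n=\mc{J}$ and thus $y=Qx\in Q\mc{J}$.

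The step I expect to be the genuine obstacle is the control of the preimages $x_{n_k}$: a priori they need not be bounded, and without the reduction to $\ker(Q)^\perp$ they could run off to infinity in kernel directions, leaving no candidate limit point; the hypothesis $\ker(Q)\subseteq\mc{J}^n$ is exactly what licenses replacing $x_{n_k}$ by $Px_{n_k}$ inside $\mc{J}^{n_k}$. Everything after that reduction is routine. As a byproduct the same argument shows $Q\mc{J}^n=Q(\mc{J}^n\cap\ker(Q)^\perp)$, a homeomorphic image of a closed set and therefore closed, although this closedness is not actually needed for the statement.
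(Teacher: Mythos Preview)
Your proof is correct and follows essentially the same route as the paper's: both establish the $\Limsup$ inclusion by projecting the preimages $x_{n_k}$ onto $\ker(Q)^\perp$ via a convexity-plus-closedness argument, then invert $Q$ on that complement to produce a convergent sequence with limit in $\mc{J}$. The only cosmetic differences are that you isolate the step as the lemma $C+L=C$ while the paper performs the convex-combination trick inline, and you phrase the inversion via $(Q|_{\ker(Q)^\perp})^{-1}$ where the paper uses the Moore--Penrose pseudoinverse $Q^{\dagger}$.
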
 
\begin{proof}
We must show that 
\begin{equation*}
Q\mc{J}\subseteq \Liminf_{n\to+\infty}Q\mc{J}^n\subseteq  \Limsup_{n\to+\infty}Q\mc{J}^n\subseteq Q\mc{J}.
\end{equation*}
The first containment is an easy consequence of the definitions and we omit the details. Since one always has 
$\Liminf_{n\to+\infty}Q\mc{J}^n\subseteq  \Limsup_{n\to+\infty}Q\mc{J}^n$ we need only prove the final containment.

Let $y\in \Limsup_{n\to+\infty}Q\mc{J}^n$, this means we may find sequences $(y_n)_{n\in\mb{N}}$ and $(x_n)_{n\in\mb{N}}$ for which $y_n=Qx_n$ and $x_n\in\mc{J}^n$ for all $n\in\mb{N}$ and such that $(y_{n_k})_{k\in\mb{N}}$ converges to $y$ for a subsequence $(n_k)_{k\in\mb{N}}$. We must show that we can construct $x$ with $x\in\mc{J}$ and 
$Qx=y$. For each $k\in\mb{N}$ we may decompose $x_{n_k}$ uniquely as $x_{n_k}=x_{n_k}^1+x_{n_k}^2$ with $x_{n_k}^1\in\ker(Q)$ and $x_{n_k}^2\in\ker(Q)^{\perp}$.
From the assumption $\ker(Q)\subset \mc{J}^n$ we see that for all $\eps\in(0,1)$, $\tfrac{-(1-\eps)}{\eps}\,x_{n_k}^1\in\mc{J}^{n_k}$ so that 
\begin{equation*}
(1-\eps)x_{n_k}^2=\eps\,\tfrac{-(1-\eps)}{\eps}\,x_{n_k}^1+(1-\eps)x_{n_k}\in\mc{J}^{n_k}
\end{equation*}
by convexity. Since each $\mc{J}^{n_k}$ is also closed, letting $\eps$ tend to zero we see $x_{n_k}^2\in\mc{J}^{n_k}$.
From the above construction it follows that $x_{n_k}^2=Q^{\dagger}Qx_{n_k}$, where $Q^{\dagger}$
is the Moore-Penrose pseudoinverse of $Q$. Define now the vector $x:=Q^{\dagger}y$, then we have $x=\lim_{k\to+\infty}x_{n_k}^2$ since
\begin{equation*}
\big\|x_{n_k}^2-x\big\|\leq\big\|Q^{\dagger}\big\|\cdot\big\|Qx_{n_k}^2-y\big\|=\big\|Q^{\dagger}\big\|\cdot\|Qx_{n_k}-y\|=\big\|Q^{\dagger}\big\|\cdot\|y_{n_k}-y\|,
\end{equation*} 
where the right hand side tends to zero by assumption. As a consequence $x\in\Limsup_{n\to+\infty}\mc{J}^n=\mc{J}$ and $y=\lim_{k\to+\infty}Qx_{n_k}=\lim_{k\to+\infty}Qx_{n_k}^2=Qx$, hence $y\in Q\mc{J}$.
\end{proof}

\begin{ak}
The authors thank Ulrich Horst and Harry Zheng for helpful suggestions, comments and discussion.
\end{ak}
\bibliography{Sensitivity_Cones}
\bibliographystyle{abbrv}

\end{document}